\newtheorem{remark}{Remark}
\newcommand{\COLORON}{1}
\newcommand{\NOTESON}{0}
\newcommand{\Debug}{0}
\newcommand{\forb}[1]{\mathrm{Forb}(#1)}
\newcommand{\ex}[1]{\mathrm{Ex}(#1)}
\newcommand{\prl}[1]{#1^{\obslash}}
\newcommand{\uop}{\ensuremath{U}-\OP}
\newcommand{\pln}{\ensuremath{\mathrm{Planar}}}
\newcommand{\plV}{\ensuremath{\mathrm{Planar_V}}}
\newcommand{\frs}{\mathcal{F}}
\newcommand{\frsV}{\frs_\mathrm{V}}
\newcommand{\frsE}{\frs_\mathrm{E}}
\newcommand{\frsCE}{\frs_\mathrm{{/E}}}
\newcommand{\rmv}[1]{\ensuremath{#1_{\mathrm{V}}}}
\newcommand{\rme}[1]{\ensuremath{#1_{\mathrm{E}}}}
\newcommand{\rmce}[1]{\ensuremath{#1_{\mathrm{/E}}}}
\newcommand{\rmece}[1]{\ensuremath{#1_{\mathrm{E/E}}}}
\newcommand{\ope}{\rme{OP}}
\newcommand{\OuPl}{\ensuremath{{\mathrm{OP}}}}
\newcommand{\SU}{\ensuremath{\Sig_\bullet}}
\newcommand{\dbar}[1]{\ensuremath{\overline{\overline{#1}}}}
\newcommand{\cof}{co-finite}
\newcommand{\Cof}{Co-finite}
\newcommand{\uncof}{UNCOF}
\newcommand{\mm}{marked minor}
\newcommand{\umm}{\ensuremath{U}-marked minor}
\newcommand{\Sig}{\ensuremath{\Sigma}}
\newcommand{\vapf}{VAP-free}
\newcommand{\Pof}{Proof of }
\newcommand{\omdot}{\omega \cdot}
\newcommand{\comment}[1]{}
\newcommand{\COMMENT}[1]{}
\definecolor{darkgray}{rgb}{0.3,0.3,0.3}
\newcommand{\defi}[1]{{\color{darkgray}\emph{#1}}}
\newcommand{\acknowledgement}{\section*{Acknowledgement}}
\newtheorem{proposition}{Proposition}[section]
\newtheorem{definition}[proposition]{Definition}
\newtheorem{theorem}[proposition]{Theorem}
\newtheorem{corollary}[proposition]{Corollary}
\newtheorem{lemma}[proposition]{Lemma}
\newtheorem{conjecture}{{Conjecture}}[section]
\newtheorem{problem}[conjecture]{{Problem}}
\newtheorem{question}[conjecture]{{Question}}
\newtheorem{examp}[proposition]{Example}%[section]
\newcommand{\FIG}{0}
\newcommand{\note}[1]{ 

\hspace*{-30pt}
	{\color{blue}  NOTE: \color{Turquoise}{\small  \tt \begin{minipage}[c]{1.1\textwidth}  #1 \end{minipage} \ignorespacesafterend }} 
	
	}
\else \newcommand{\note}[1]{} \fi
\newcommand{\afsubm}[1]{ \ifnum \Debug = 1 {\mymargin{#1}}
\fi} %For notes on after-submission changes
\newcommand{\fig}[1]{Figure ``{#1}''}
\else \newcommand{\fig}[1]{Figure~\ref{#1}} \fi
\renewcommand{\color}[1]{}
\newcommand{\N}{\ensuremath{\mathbb N}}
\newcommand{\R}{\ensuremath{\mathbb R}}
\newcommand{\BS}{\ensuremath{\mathbb S}}
\newcommand{\cb}{\ensuremath{\mathcal B}}
\newcommand{\cc}{\ensuremath{\mathcal C}}
\newcommand{\ci}{\ensuremath{\mathcal I}}
\newcommand{\cp}{\ensuremath{\mathcal P}}
\newcommand{\sm}{\backslash}
\newcommand{\cls}[1]{\ensuremath{\overline{#1}}}
\DeclareRobustCommand{\cev}[1]{%
  \mathpalette\do@cev{#1}%
}
\newcommand{\do@cev}[2]{%
  \fix@cev{#1}{+}%
  \reflectbox{$\m@th#1\vec{\reflectbox{$\fix@cev{#1}{-}\m@th#1#2\fix@cev{#1}{+}$}}$}%
  \fix@cev{#1}{-}%
}
\newcommand{\fix@cev}[2]{%
  \ifx#1\displaystyle
    \mkern#23mu
  \else
    \ifx#1\textstyle
      \mkern#23mu
    \else
      \ifx#1\scriptstyle
        \mkern#22mu
      \else
        \mkern#22mu
      \fi
    \fi
  \fi
}
\newcommand{\nin}{\ensuremath{{n\in\N}}}
\newcommand{\pth}[2]{\ensuremath{#1}\text{--}\ensuremath{#2}~path}
\newcommand{\pths}[2]{\ensuremath{#1}\text{--}\ensuremath{#2}~paths}
\newcommand{\arc}[2]{\ensuremath{#1}\text{--}\ensuremath{#2}~arc}
\newcommand{\seq}[1]{\ensuremath{(#1_n)_{n\in\N}}} 
\newcommand{\g}{\ensuremath{G\ }}
\newcommand{\G}{\ensuremath{G}}
\newcommand{\OP}{outerplanar}
\newcommand{\Ktt}{\ensuremath{K_{3,3}}}
\newcommand{\wqo}{well-quasi-ordered}
\newcommand{\Lr}[1]{Lemma~\ref{#1}}
\newcommand{\Tr}[1]{Theorem~\ref{#1}}
\newcommand{\Trs}[1]{Theorems~\ref{#1}}
\newcommand{\Sr}[1]{Section~\ref{#1}}
\newcommand{\Srs}[1]{Sections~\ref{#1}}
\newcommand{\Prr}[1]{Pro\-position~\ref{#1}}
\newcommand{\Cr}[1]{Corollary~\ref{#1}}
\newcommand{\Cnr}[1]{Con\-jecture~\ref{#1}}
\newcommand{\Qr}[1]{Question~\ref{#1}}
\newcommand{\lf}{locally finite}
\newcommand{\scl}{star-comb lemma}
\renewcommand{\iff}{if and only if}
\newcommand{\fe}{for every}
\newcommand{\Fe}{For every}
\newcommand{\fea}{for each}
\newcommand{\st}{such that}
\newcommand{\ti}{there is}
\newcommand{\pwd}{pairwise disjoint}
\newcommand{\obda}{without loss of generality}
\newcommand{\labtequ}[2]{%\labtequc{#1}{#2}}
 \begin{equation} \label{#1} 	\begin{minipage}[c]{0.9\textwidth}  #2 \end{minipage} \ignorespacesafterend \end{equation} }
\newcommand{\mymargin}[1]{% <- dieses % verhindert ein ungewolltes Leerzeichen
 \ifnum \Debug = 1
  \marginpar{%
    \begin{minipage}{\marginparwidth}\small%
      \begin{flushleft}%
        {\color{blue}#1}%
      \end{flushleft}%
   \end{minipage}%
  }%
 \fi
}%
\newcommand{\extras}[1]{% <- dieses % verhindert ein ungewolltes Leerzeichen
 \ifnum \Debug = 1
\section{Extras} #1
 \fi
}%
\newcommand{\mySection}[2]{}
\newcommand{\Erd}{Erd\H{o}s}
\newcommand{\DB}{\cite{diestelBook05}}
\newcommand{\LemCombStarC}{\cite[Lemma~8.2.2]{diestelBook05}} %how to cite
\begin{document}
	\title{The excluded minors for embeddability into a   compact surface}
	
\author{Agelos Georgakopoulos\thanks{Supported by  EPSRC grants EP/V048821/1 and EP/V009044/1.}}
\affil{  {Mathematics Institute, University of Warwick}\\
  {\small CV4 7AL, UK}}

\date{\today}

\maketitle

\begin{abstract}
We determine the excluded minors characterising the class of countable graphs that embed into some compact surface.            
\end{abstract}

\medskip

{\noindent {\bf Keywords:} \small excluded minor, graphs in surfaces, outerplanar, star-comb lemma.} 

\smallskip
{\noindent \small  {\bf MSC 2020 Classification:}} 05C63, 05C83, 05C10, 05C75. %05C99, 05C83, 05C10. \\ 

\section{Introduction}

The main aim of this paper is to provide the excluded minors characterising the class of countable graphs that embed into a compact surface, whereby we put no restriction on the genus. We will prove
\begin{theorem}\label{main thm}
A countable graph \g embeds into a compact (orientable) surface \iff\ it does not have one of the 8 graphs of \fig{figExSig} as a minor.\footnote{Every non-trivial infinite graph has many `minor-twins'; for example, for each pair $x,y$ of vertices of infinite degree in $\Sig_i, i\geq 5$, we could add or remove the $x$--$y$~edge.}
\end{theorem}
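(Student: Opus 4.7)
The theorem has two directions. For \emph{necessity}, one verifies case by case that each of the eight graphs $\Sigma_i$ of \fig{figExSig} fails to embed into any compact orientable surface. The arguments are topological in nature: several of the $\Sigma_i$ will contain, as finite minors, graphs of arbitrarily large Euler genus, hence cannot embed in any single compact surface (which has fixed finite genus); the remaining $\Sigma_i$ will exhibit some branching or accumulation structure -- typically a vertex of infinite degree incident with infinitely many pairwise disjoint cycles, or a comb-like structure converging to an end -- incompatible with the local 2-manifold and compactness properties of the host surface.

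For \emph{sufficiency}, I argue by contrapositive: assuming $G$ does not embed into any compact surface, I exhibit one of the $\Sigma_i$ as a minor of $G$. A first reduction handles vertices of infinite degree. If $v \in V(G)$ has infinite degree, apply the \scl\ \LemCombStarC\ to $N(v)$ inside $G - v$: one obtains either a subdivided infinite star or an infinite comb, and in each case -- combined with $v$ and the ambient structure of $G$ obtained by further applications of the \scl\ -- this grows into some $\Sigma_i$. If $G$ is locally finite, a separate argument handles the subcase where the genera of the finite minors of $G$ are unbounded: a diagonal/pigeonhole argument extracts a nested sequence of such minors whose ``limit'' realises one of the $\Sigma_i$.

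In the remaining subcase -- $G$ locally finite with every finite minor embeddable into a common compact surface $S$ -- one must actually build a global embedding of $G$ into $S$. Exhaust $G$ by finite connected subgraphs $G_1 \subset G_2 \subset \cdots$ with union $G$, fix embeddings $\phi_n \colon G_n \hookrightarrow S$, and apply K\"onig's Infinity Lemma to the tree of equivalence classes of partial embeddings to extract a coherent subsequence converging to an embedding of $G$. This limit embedding is valid provided no edges or vertices accumulate at a point of $S$ not in the image of a vertex. Ruling out such pathological accumulations is the \emph{main obstacle}: I would apply the \scl\ once more to the set of edges clustering near an offending accumulation point and use the end structure of $G$ together with the hypothesis of no $\Sigma_i$-minor to eliminate each type of bad cluster. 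The most delicate part will be verifying that exactly eight minors suffice, which requires systematically identifying and merging superficially distinct obstructions into a common normal form; this is where I expect the bulk of the technical work to lie.
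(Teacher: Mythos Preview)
Your proposal has the right overall shape but misses the paper's central structural reduction, and the plan for the non-locally-finite case is too vague to carry through.

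First, two over-complications in your locally finite discussion. Step 3 is unnecessary: the paper's definition of embedding only demands that the restriction to each \emph{finite} subgraph be a topological embedding, and with this definition it is a known fact (Mohar, cited in the paper) that a countable graph embeds into a fixed compact surface $\Gamma$ as soon as every finite subgraph does. So there is no accumulation pathology to rule out. Step~2 is morally right but ``nested sequence'' is not what you want: one extracts \emph{pairwise disjoint} $K_5$- or $K_{3,3}$-minors by repeatedly deleting the branch sets found so far (the paper's \Prr{prop PV} and \Cr{cor lf}). This cleanly yields $\Sigma_1$ or $\Sigma_2$.

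The real gap is the infinite-degree case. Your plan is to apply the \scl\ to $N(v)$ in $G-v$ and iterate. But a single application gives only a $K_{2,\omega}$- or fan-minor, neither of which is among the $\Sigma_i$; and ``further applications'' does not by itself explain how to arrive at exactly $\Sigma_3,\dots,\Sigma_8$. More fundamentally, your case split is off: merely having an infinite-degree vertex tells you nothing, since many graphs in $\Sigma$ have such vertices. The paper's organising idea, which you are missing, is this: first show (via \Prr{prop PV}) that either $\Sigma_1$ or $\Sigma_2$ occurs, or else some finite vertex-deletion puts $G$ into $\Sigma$; then, reducing to the removal of a single vertex $v_1$, observe that $G\in\Sigma$ is equivalent to the $U$-cone $C_U(G-v_1)$ lying in $\Sigma$, where $U=N(v_1)$. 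This recasts the problem as one about \emph{relative outerplanarity in $\Sigma$} and leads to a list of excluded \emph{marked} minors for the class $\Sigma_U$ (\Lr{lem SU}, the technical heart), obtained by combining Kuratowski's theorem with the finitary planar decomposition of \Tr{thm decomp} and a delicate analysis of $\Sigma_U$-critical vertices. Coning each of these marked obstructions over $v_1$ then yields the $\Sigma_i$. Without this cone/relative-outerplanar reduction, there is no visible mechanism in your outline for producing $\Sigma_5,\Sigma_6,\Sigma_7,\Sigma_8$ specifically.
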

It is an exercise to show that none of these graphs is a minor of another. Since none of these graphs embeds into a closed surface, orientable or not, our theorem remains valid if we remove the word `orientable'. 

All graphs in this paper are countable. In the \lf\ case, only the first two obstructions $\Sig_1=\omdot K_5, \Sig_2=\omdot \Ktt$ are needed (see \Cr{cor lf}).

\begin{figure}[H]
\begin{center}
\begin{overpic}[width=.9\linewidth]{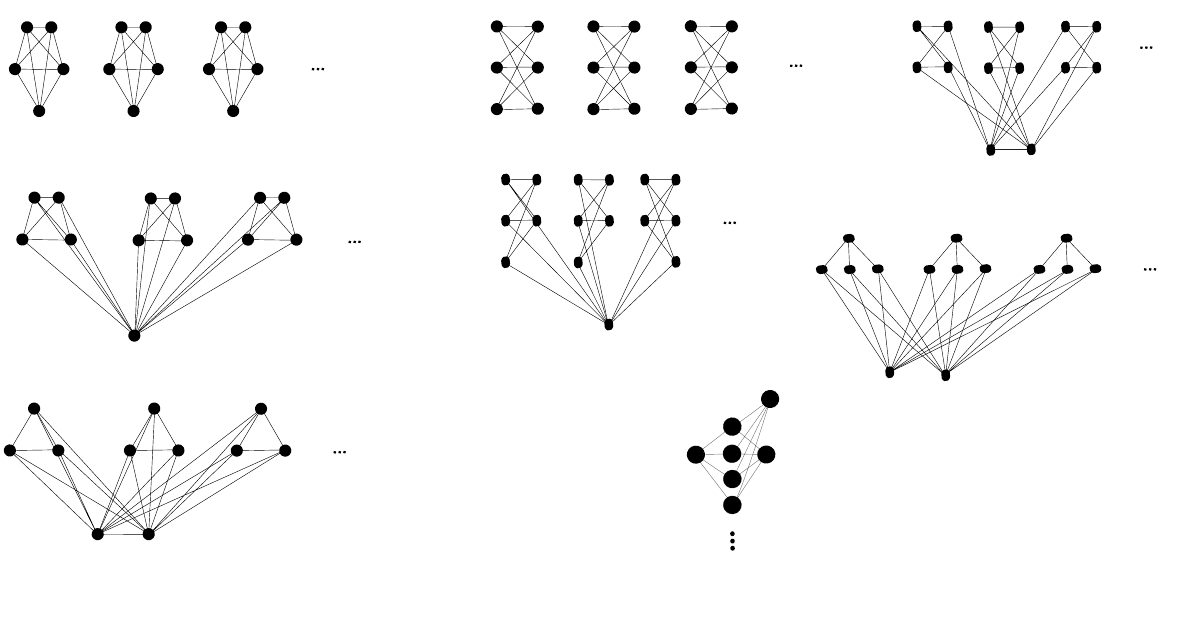} 
\put(-4,45){$\Sig_1$}
\put(37,45){$\Sig_2$}
\put(-4,31){$\Sig_3$}
\put(37,31){$\Sig_4$}
\put(73,45){$\Sig_6$}
\put(65,26){$\Sig_7$}
\put(-4,13){$\Sig_5$}
\put(55,9){$\Sig_8$}
\end{overpic}
\end{center}
\caption{The excluded minors of \Sig\ as provided by \Tr{main thm}. \newline 
    \hspace{\linewidth} $\bf \Sig_1$ (respectively $\bf \Sig_2$): the disjoint union of infinitely many copies of $K_5$ (resp.\ \Ktt);  \newline 
    \hspace{\linewidth} $\bf \Sig_3$ (resp.\ $\bf \Sig_4$):  the graph arising from $\Sig_1$ (resp.\ $\Sig_2$) by picking  one vertex from each component and identifying them; \newline 
    \hspace{\linewidth} $\bf \Sig_5$ (resp.\ $\bf \Sig_6$): the graph arising from $\Sig_1$ (resp.\ $\Sig_2$) by picking  one edge from each component and identifying them; \newline 
    \hspace{\linewidth} $\bf \Sig_7$: the graph arising from $\Sig_2$ by picking  one pair of non-adjacent vertices from each component and identifying these pairs; and %\hspace{\textwidth}
${\bf \Sig_8 }= K_{3,\omega}$.
%$\Sig_2$: the disjoint union $\omdot \Ktt$ of infinitely many copies of $\Ktt$; 
%$\Sig_3$ (respectively $\bf \Sig_4$):  the graph arising from $\Sig_1$ by picking  one vertex from each component and identifying them; 
%$\Sig_4$: the graph arising from $\Sig_2$ by picking  one vertex from each component and identifying them; 
%$\Sig_5$: the graph arising from $\Sig_1$ by picking  one edge from each component and identifying them; 
%$\Sig_6$: the graph arising from $\Sig_2$ by picking  one edge from each component and identifying them; 
} \label{figExSig}
\end{figure}

An analogous statement for embeddings into a fixed surface %that several authors attribute to private communication with Robertson \& Seymour \cite{BKMM,christian_embedding_2015,FulKynGen}, and seems to rely on difficult machinery from the Graph Minor Theorem, 
is the following theorem of Robertson \& Seymour \cite{RobSeyKur} (previously announced in \cite{BKMM,christian_embedding_2015,FulKynGen}).  \Fe\ \nin, \ti\ $g$, \st\ for every graph \g of genus at least $g$, there is some $\Sig_i$ as in  \fig{figExSig}, \st\  any subgraph of size $n$ of $\Sig_i$ is a minor of \G. I do not see a way to deduce this from \Tr{main thm} or vice-versa. An important difference between these two results is that if we restrict to the orientable case, then we need to allow the $n\times n$ projective grid as a further obstruction to embeddability into a fixed surface, but \Tr{main thm} proves that we have the same excluded minors with or without the orientability restriction when allowing arbitrarily high genus.

\medskip
One of the tools for our proof of \Tr{main thm} is the following result of independent interest, saying that a graph embeds into a compact surface \iff\ it can be decomposed into finitely many planar subgraphs with finite pairwise intersections. 
\begin{theorem}\label{thm decomp Intro}
A countable graph embeds into a compact (orientable) surface \iff\ it admits a finitary decomposition into planar pieces.
\end{theorem}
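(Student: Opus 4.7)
The plan is to prove the two implications separately: the backward direction by a gluing construction, the forward direction by a cutting argument.

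\emph{Sufficiency ($\Leftarrow$).} Given a finitary decomposition $G = G_1 \cup \cdots \cup G_n$, I would realise each $G_i$ as a planar embedding on its own copy $S_i$ of the sphere, and then glue these spheres along the finite shared subgraphs $G_i \cap G_j$ to obtain a compact surface hosting $G$. After reassigning shared edges to a single piece I may assume each $G_i \cap G_j$ is a finite vertex set. For each vertex $v$ shared among several $G_i$ (only finitely many such vertices exist in total), the copies of $v$ lie in different spheres, each with its own cyclic order of incident edges. I would excise tiny disks around each copy and glue them together via a small connecting surface (a sphere with an appropriate number of disks removed, i.e.\ a generalised pair of pants) carrying a single new vertex attached by disjoint arcs to the boundary circles, which concatenates the local rotations into one. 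Each such local surgery adds only finitely many handles, so the total space is a compact $2$-manifold into which $G$ embeds.

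\emph{Necessity ($\Rightarrow$).} Embed $G$ in a compact surface $S$ of finite Euler genus. Choose a finite cut system $\Gamma = \{\gamma_1, \dots, \gamma_k\}$ of pairwise disjoint simple closed curves in $S$ whose complement $S \setminus \bigcup_i \gamma_i$ is a disjoint union of open disks. The key step is to isotope $\Gamma$ into general position with $G$: each $\gamma_i$ avoids the countable set $V(G)$ and crosses $E(G)$ in only finitely many points. Once this is achieved, for each open disk $D$ of $S \setminus \bigcup_i \gamma_i$ the subgraph $G_D := \overline{D} \cap G$ is planar, and distinct $G_D, G_{D'}$ intersect only along $\overline{D} \cap \overline{D'} \subset \Gamma$, which meets $G$ finitely; this is the desired finitary decomposition.

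\emph{The main obstacle} is the general-position step. Transversality of $\gamma_i$ with each individual edge gives finite crossings per edge, but infinitely many edges of $G$ may accumulate onto a curve $\gamma_i$, forcing infinitely many total crossings. I expect to overcome this in one of two ways: either (i) by an approximation argument using that $G$, as a countable union of arcs, is meagre in $S$, which allows each $\gamma_i$ to be pushed into $S \setminus G$ except at finitely many transversal points; or (ii) by exhibiting a finite subgraph $H \subseteq G$ whose embedding in $S$ is cellular, so that the closures of the faces of $H$ give the planar pieces directly. Establishing such an $H$ (equivalently, showing that after finitely many cycles of $G$ the embedding carries the full topology of $S$) for arbitrary, possibly wild, infinite embeddings is where I expect the real technical work to lie, and may well require revisiting the definition of ``embedding'' to enforce some tameness.
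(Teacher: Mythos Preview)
Your backward direction is essentially the paper's: the paper isolates the step ``add a handle to identify two vertices of a graph already in $\Sigma$'' as a lemma (using \emph{generous} embeddings to guarantee room for the handle), then iterates it finitely often. Your pair-of-pants gluing is a variant of the same idea.

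For the forward direction, your option~(ii) is exactly the route the paper takes, and the existence of $H$ is much easier than you fear. Since a countable graph embeds in a surface if and only if every finite subgraph does, there is a finite $H\subseteq G$ with $\gamma(H)=\gamma(G)$; by Youngs' theorem the restricted embedding of such an $H$ in the minimal-genus surface is automatically cellular. No extra tameness on the embedding of $G$ is needed (beyond generosity, which can always be arranged).

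The genuine gap is your next sentence, that ``the closures of the faces of $H$ give the planar pieces directly''. A face $F$ of $H$ is an open disc, but its closure need not be a closed disc: a vertex of $H$ may occur several times on $\partial F$. Consequently the subgraph of $G$ induced by what lies in $F$ together with its neighbours on $\partial F$ need not be planar. (Arranging that all face closures are discs is essentially Jaeger's strong embedding conjecture.) The paper repairs this with real extra work: for each $v\in V(H)\cap\partial F$ it splits $v$ into one ``boundary vertex'' per sector of a small disc around $v$ inside $F$, obtaining a planar graph $G'_F$ with all boundary vertices on its outer face; it then refines the decomposition by a finite forest $T_F\subseteq G'_F$ connecting the boundary vertices, so that in each resulting sub-face every component meets at most two of them --- few enough that identifying them back preserves planarity. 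This step, not the existence of $H$, is where the technical content lies.

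Your option~(i) is indeed problematic for the reason you give; the paper sidesteps the accumulation issue precisely by letting a finite subgraph of $G$ itself serve as the cut system rather than an extrinsic family of curves.
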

See \Sr{sec decomp} for the precise definitions. \Tr{thm decomp Intro} supports the meta-conjecture that any result proved for infinite planar graphs generalises, rather easily, to graphs embeddable into a compact surface. Examples of such results include the main results of \cite{intersection,planarPB,UKtrans,HuNaUni,kozPPP}. See \Sr{sec impl} for more.

\medskip
Part of the motivation for \Tr{main thm} comes from a well-known conjecture of Thomas \cite{ThoWel} postulating that the countable graphs are well-quasi-ordered under the minor relation. The analogous statement for finite graphs is the celebrated Graph Minor Theorem of Robertson \& Seymour \cite{GMXX}. A positive answer to Thomas's conjecture would imply that every minor-closed class $\cc$ of countable graphs is characterised by forbidding a finite list $\ex{\cc}$ of \defi{excluded minors}. This is in general hard to show even for a concrete class of graphs like the class $\Sigma$ of \Tr{main thm}; indeed, it is a-priori not even clear that $\ex{\Sigma}$ is finite. Apart from the fact that $\Sigma$ is a natural class to consider, another reason why the finiteness of $\ex{\Sigma}$ is a pressing question if one is interested in Thomas's conjecture is the important role played by classes of finite graphs embeddable in a fixed surface in the proof of the Graph Minor Theorem.

Many natural minor-closed graph classes  \cc, e.g.\ the graphs embeddable into a fixed surface, have the property that a graph is in \cc\ as soon as every finite subgraph is. This has the consequence that  $\ex{\cc}$ coincides with the list of excluded minors of the subclass of \cc\ comprising its finite elements. Apart from such classes, there are very few classes \cc\ of infinite graphs for which $\ex{\cc}$ is explicitly known. The only example I am aware of are the graphs with accumulation-free embeddings in the plane \cite{HalSom}. %\mymargin{More?}

%We remark that the class $\Sigma$ was studied by Mohar \cite{mohar88} from the perspective of cellular embeddings.

Additional motivation for \Tr{main thm} comes from a question raised by Christian, Richter \& Salazar \cite{christian_embedding_2015}, asking for a characterisation of the Peano continua that embed into a closed surface analogous to Claytor's \cite{ClaPea} characterisation of the Peano continua embeddable into $\BS^2$. The special case of graph-like continua was handled in \cite{christian_embedding_2015}, and the characterisation obtained is similar to \Tr{main thm}. But the lack of compactness does not allow using that  characterisation to deduce \Tr{main thm}. Using \Tr{main thm} and the \scl\ (\Lr{SC lem} below) it is not difficult to determine the excluded \emph{topological} minors for embeddability into a compact surface, and this could be a first step towards answering the aforementioned question of Christian et al.\ \cite{christian_embedding_2015}. 

\medskip
Our proof of \Tr{main thm} is elementary (but involved), relying only on Kuratowski's theorem, and a classical result of Youngs about cellular embeddings of finite graphs. It is carried out mostly in \Srs{sec decomp}, \ref{sec UOP} and \ref{sec proof}. On the way to \Tr{main thm} we will develop techniques that allow us to find the excluded minors of families of infinite graphs that satisfy a property up to finitely many flaws: we will characterize the graphs that become forests after deleting, or contracting, finitely many edges  (\Srs{sec AF} and~\ref{sec AFR}), as well as the graphs that are \OP\ up to deleting finitely many edges (\Sr{sec OPE}).

\medskip
The \scl\ is one of the most useful tools in infinite graph theory. In \Sr{sec SC} we obtain the following strengthening for 2-connected graphs:

\begin{theorem} \label{SC 2con intro}
Let \g be a countable, 2-connected, graph, and $U\subseteq V(G)$ infinite. Then \g contains a subdivision of an infinite ladder, or of an infinite fan, or of $K_{2,\infty}$, having infinitely many vertices in $U$.
\end{theorem}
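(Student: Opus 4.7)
The plan is to apply the star-comb lemma twice, with 2-connectivity allowing a second application after removing a well-chosen vertex from the first substructure, and then to extract the required subdivision by a greedy construction.

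First, I apply the star-comb lemma to $G$ and $U$: we obtain either a star centred at some $v$ with infinitely many pairwise internally disjoint paths $P_1, P_2, \ldots$ from $v$ to distinct $u_i \in U$, or a comb---a ray $R = r_1 r_2 \ldots$ together with infinitely many pairwise disjoint paths $T_i$ from $u_i \in U$ to endpoints $t_i \in R$.

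\textbf{Star case.} Since $G$ is 2-connected, $G - v$ is connected, so I can apply the star-comb lemma in $G - v$ to the set $\{u_i : i \in \N\}$. If this yields a comb in $G - v$ with ray $S$ and teeth ending at a sub-sequence $u_{n_j}$, then concatenating each $P_{n_j}$ with the $j$-th new tooth gives a $v$-to-$S$ path through $u_{n_j}$; after pruning to a further sub-sequence with pairwise internally disjoint paths whose endpoints on $S$ are traversed monotonically, the result is a subdivided infinite fan with hub $v$ and ray $S$, with each $u_{n_j} \in U$ on the $j$-th spoke. If instead it yields a star in $G - v$ with centre $w \neq v$ and paths $Q_j$ from $w$ to $u_{n_j}$, I aim for a subdivision of $K_{2,\infty}$ with branch vertices $v$ and $w$: each $P_{n_j} \cup Q_j$ is connected and contains both $v$ and $w$, hence contains a $v$-$w$ path $A_j$, and a greedy induction---choosing $A_j$ to avoid the finite set committed by $A_1, \ldots, A_{j-1}$, which is possible because only finitely many $P_i$'s and $Q_k$'s meet any given finite set internally---delivers infinitely many pairwise internally disjoint such paths. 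To meet the $U$-condition I choose $A_j$ to pass through $u_{n_j}$ whenever some $v$-$w$ path in $P_{n_j} \cup Q_j$ does; the exceptional case where no such path exists means $Q_j$ meets $P_{n_j}$ at some $z_j \neq u_{n_j}$ for all but finitely many $j$, and a third appeal to the star-comb lemma on $\{z_j\}$ recovers the desired $U$-vertices or one of the target subdivisions directly.

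\textbf{Comb case.} The graph $G - r_1$ is connected, so I apply star-comb in $G - r_1$ to $\{u_i : i \geq 2\}$. If the result is another comb with ray $R' \subseteq G - r_1$ and teeth from $u_{n_j}$ to $R'$, then the concatenation $T_{n_j} \cup (\text{new }j\text{-th tooth})$ is a rung through $u_{n_j}$ joining $R$ to $R'$; an inductive pruning to ensure pairwise disjointness of the rungs and monotone arrival on both $R$ and $R'$ yields a subdivided infinite ladder. If the result is a star centred at some $w$ with paths $Q_j$ from $w$ to $u_{n_j}$ in $G - r_1$, concatenating $Q_j$ with $T_{n_j}$ gives a spoke from $w$ to $t_{n_j} \in R$, and combined with the subray of $R$ past $t_{n_1}$ this yields a subdivided fan with hub $w$ (or, if $w \in R$, one proceeds analogously using a vertex off $R$ or falls into the ladder construction).

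\textbf{Main obstacle.} The delicate point throughout is simultaneously maintaining pairwise (internal) disjointness of the extracted paths and preserving infinitely many vertices of $U$ in the subdivision. Both requirements are handled by greedy constructions that commit only finitely many vertices at each step, exploiting that the two star-comb applications each supply infinitely many candidate paths, almost all of which avoid any fixed finite set internally. The hardest sub-case is the star/star branch above, where the two path families $\{P_i\}$ and $\{Q_j\}$ may interact in $G$ in such a way that the extracted $v$-$w$ paths naturally skip over $u_{n_j}$; handling this carefully---either by refining the choice of $A_j$ inside $P_{n_j}\cup Q_j$, or by invoking star-comb a third time on the intersection vertices---is where most of the technical work lies.
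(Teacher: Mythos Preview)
Your approach of applying the star-comb lemma twice is natural, and the star case is largely salvageable (though the star/star sub-case where the $v$--$w$ path in $P_{n_j}\cup Q_j$ misses $u_{n_j}$ is not yet handled convincingly---``a third appeal to star-comb on $\{z_j\}$'' does not obviously recover vertices of $U$). The serious gap, however, is in the comb case.

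Removing the single vertex $r_1$ from $R$ accomplishes nothing: the tail $r_2 r_3 \ldots$ of $R$ survives in $G-r_1$, as do almost all of the original teeth $T_i$, so the second application of star-comb may simply return the \emph{same} comb (with spine $R\setminus\{r_1\}$). In that event $R'$ is not disjoint from $R$, the ``rungs'' $T_{n_j}\cup(\text{new tooth})$ degenerate, and no ladder emerges. More generally, nothing in your argument forces $R'$ to be disjoint---or even eventually disjoint---from $R$, and the ladder construction needs exactly that. The fallback ``if $w\in R$ one proceeds analogously'' in the comb/star sub-case is likewise unsupported.

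The paper's proof is organised quite differently. It first settles the locally finite case separately, using Halin's theorem that a 2-connected locally finite graph has two disjoint rays in any end (this is where the second ray genuinely disjoint from $R$ comes from). For the general case it works inside a normal spanning tree $T$, applies star-comb to $T$, and then splits the comb case according to whether the spine is \emph{dominated}: a dominating vertex yields the fan directly, while an undominated spine allows one to build a locally finite 2-connected subgraph containing $R$ and infinitely many $U$-vertices, reducing to the case already handled. The dominated/undominated dichotomy is exactly the missing ingredient that lets one decide between fan and ladder; your single vertex deletion does not provide it.
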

If \g is \lf, then this results in a ray in \g containing an infinite subset of $U$. %A further strengthening of the \scl\ se

\medskip
The aforementioned conjecture of Thomas was studied by Robertson, Seymour \& Thomas \cite{RoSeThExcI,RoSeThExcII}, and they concluded that {\it there is not much chance of proving} it, as it would have implications about the ordering of finite graphs. It is therefore natural to try to extend the Graph Minor Theorem to an intermediate level covering all finite graphs but not necessarily all countable ones. A concrete approach for doing so is offered by \Cnr{Con good} and other questions in \Sr{final} arising from our results and methods.

\section{Preliminaries} \label{prels}

%\subsection{Graphs} \label{sec gra}

We follow the terminology of Diestel \DB. We use $V(G)$ to denote the set of vertices, and $E(G)$ the set of edges of a graph \G. For $S\subseteq V(G)$, the subgraph $G[S]$ of \g \defi{induced} by $S$ has vertex set $S$ and contains all edges of \g with both end-vertices in $S$.

The \defi{degree} $d(v)=d_G(v)$ of a vertex $v$ in a graph \G, is the number of edges of \g incident with $v$. %The \defi{maximum degree} $\Delta(G)$ of \g is $\sup_{v\in V(G)} d(v)$. The \defi{closed neighbourhood} of $v\in V(G)$ is the subgraph of \g induced by $v$ and all its \defi{neighbours}, i.e.\ the vertices sharing an edge with $v$.

A \defi{ray} is a one-way infinite path. We say that \g is \defi{\lf}, if no vertex of \g lies in infinitely many edges.

%\subsection{Minors} \label{sec min}
\medskip

Let $G,H$ be graphs. An $H$ \defi{minor} of $G$ is a collection of disjoint connected subgraphs $B_v, v\in V(H)$ of $G$, called \defi{branch sets}, 
and edges $E_{uv}, uv\in E(H)$ of \g % (called \defi{branch edges}) 
such that each $E_{uv}$ has one end-vertex in $B_u$ and one in $B_v$. We write $H<G$ to express that $G$ has an $H$ minor.

%A \defi{model} of $H$ in $G$ is a collection of disjoint connected subgraphs $B_v, v\in V(H)$ of $G$, called \defi{branch sets}, 
%and edges $E_{uv}, uv\in E(H)$ of \g % (called \defi{branch edges}) 
%such that each $E_{uv}$ has one end-vertex in $B_u$ and one in $B_v$.  We say that $H$ is a \defi{minor} of $G$, and write  \defi{$H<G$}, if there is a model of $H$ in $G$.

%We say that $G$ and $H$ are \defi{minor-twins}, if both $G<H$ and $H<G$ hold. Any two finite minor-twins are isomorphic, but in the infinite case it is often helpful to think in terms of equivalence classes of minor-twins instead of isolated graphs.

Given a set $X$ of graphs, we write $\forb{X}$ for the class of graphs $H$ \st\ no element of $X$ is a minor of $H$. %For a minor-closed graph class \cc, we write $\ex{\cc}$ for the set of graphs $G$ \st\ $G\not\in \cc$ but $H\in \cc$ for every $H<G$ \st\ $G\not <H$. If \cc\ consists of finite graphs, then $\forb{\ex{\cc}}=\cc$ holds by the definitions. For infinite graphs this will fail if there is an infinite descending sequence $G_1 > G_2 > \ldots$ (consisting of graphs not in \cc) which as far as I know is open \cite{ThoWel}.

A \defi{subdivision} of a graph \g is a graph obtained by replacing some of the edges of \g by paths with the same end-vertices. 

%\subsection{Embeddings} \label{sec emb}
\medskip

A \defi{surface} is a connected 2-manifold without boundary.  An \defi{embedding}  of a countable graph \g\ into a surface $S$ is a map $f: G \to S$ from the 1-complex obtained from \g when identifying each edge with the interval $[0,1]$ to  $S$  \st\ the restriction of $f$ to each finite subgraph of $G$ is an embedding in the topological sense, i.e.\ a homeomorphism onto its image. (The reason why we restrict to finite subgraphs here is that the 1-complex topology of \g is not metrizable when \g is not \lf, and so such \g cannot have an embedding into a metrizable space $S$. For example, a star with infinitely many leaves admits an embedding into $\R^2$ in our sense but it does not admit a topological embedding. %See \cite{mohar88} for more.)
Let $\gamma(G)$ denote the minimum genus of an orientable surface into which a graph \g embeds.

The following is perhaps folklore, but we sketch a proof for completeness. The locally finite case has been proved by Mohar \cite[\S 5]{mohar88}. 
\begin{lemma} \label{lem emb cof}
Let \g be a  countable graph, and $S$ an orientable  surface. Then \g admits an embedding into $S$ if each of its finite subgraphs does.
\end{lemma}

When $S$ is the sphere, Dirac \& Shuster \cite{DiracSchuster} provide a proof  by an elementary compactness argument (which they atribute to \Erd).
Our proof is a combination of this with Youngs' \Tr{Youngs} below.

%%%%%%%%
\begin{proof}
If $S$ has infinite orientable genus $\gamma(S)$, then it is easy to embed any  countable graph in it, so let us assume $\gamma(S)$ is finite. We may assume that \g is connected, for otherwise we can apply the result to each component of \G, using the well-known fact that the genus of a finite (disconnected) graph equals the sum of the genuses of its components \cite[Corollary 2]{BHKY}.
Suppose every finite subgraph of \g embeds into $S$. Let $\gamma:= \max_{H  \text{ is a finite subgraph of } G} \gamma(H)$. We may assume \obda\ that  $\gamma= \gamma(S)$, because if \g embeds into a surface $S'$ with $\gamma(S')< \gamma(S)$, then \g also embeds into $S$ by the classification of closed surfaces.

Thus we can pick a finite subgraph $H$ of  \G\ with $\gamma(H) = \gamma(S)$. We may assume that $H$ is connected since \g is, because adding vertices and edges to $H$ cannot decrease its genus. Let $H= G_1 \subset G_2 \subset \ldots$ be a sequence of finite subgraphs of \G, \st\ $\bigcup G_i = G$. Let $g_i: G_i \to S$ be an embedding. By a standard compactness argument (see e.g.\ \cite[\S 5]{mohar88}), there is a subsequence $\{g_{a_i}\}_{i\in \N}$ along which the restriction of $g_{a_i}$ to $H$ coincides, up to an automorphism of $S$, with a fixed embedding $g: H \to S$. By Youngs' \Tr{Youngs}, each face of $g$ is homeomorphic to an open disc. Let us first assume for simplicity that the closure $\cls{F}$ of each face $F$ of $g$ is homeomorphic to a closed disc, and then make the necessary modifications to our arguments. For each such $F$, note that the subgraph $F_i$ of $G_{a_i}$ that $g_{a_i}$ maps to $\cls{F}$ is planar. Let $G_F:= \bigcup_{i\in \N} F_i$, and note that $G_F$ is planar by the aforementioned result of Dirac \& Shuster \cite{DiracSchuster}. Even more, it follows from the arguments of \cite{DiracSchuster} that $G_F$ admits an embedding $g_F$ into $\R^2$ the outer face of which coincides with the boundary of $F$ in $g$. Thus by combining these embeddings $g_F$ with our embedding $g$ of $H$ we obtain an embedding of \g into $S$.

It remains to remove our assumption that $\cls{F}$ is homeomorphic to a closed disc for each face $F$ of $g$, but this is not difficult. If this is not the case, then the closed walk $W_F$ of $H$ bounding $F$ is not a cycle, but traverses some vertices at least twice, and this may prevent $G_F$, defined as above, from being planar. In this case, we modify $G_F$ into a planar auxiliary graph $G'_F$ as follows. For each vertex $v$ of $H$ that $W_F$ visits $i>1$ times, we introduce `copies' $v_1, \ldots, v_i$ of $v$, and replace $W_F$ by a cycle visiting each $v_j, j\leq i$ exactly once. Using the result of \cite{DiracSchuster}  as above it follows that the resulting graph $G'_F$ is planar if we distribute the edges of each $v$ to the $v_j$ appropriately; we leave some straightforward details to the reader. We can then identify all $v_j$ to a single vertex to deduce that $G_F$ embeds into $\cls{F}$, and as above combine those embeddings with $g$ to obtain an embedding of \g into $S$.
\end{proof}

We let \defi{\Sig} denote the class of countable graphs that embed into a compact orientable surface. (We will prove that every graph embeddable into a compact non-orientable surface also embeds into a compact orientable one.)

\medskip
We let \defi{$\omega$} denote the smallest infinite ordinal.
\defi{}

\subsection{The \scl} \label{sec scl}
Given a graph \G, and an infinite set $U\subseteq V(G)$, we define a \defi{$U$-star} to be a subdivision of the infinite star $K_{1,\omega}$ in \g all leaves of which lie in $U$. We define a \defi{$U$-comb} to be the union of a ray $R$ of \g with infinitely many pairwise disjoint, possibly trivial, \pths{U}{R}. We call $R$ the \defi{spine} of $C$, and the \pths{U}{R}\ its \defi{teeth}.

\begin{lemma}[Star-comb lemma {\LemCombStarC}] \label{SC lem}
Let $U$ be an infinite set of vertices in a connected graph $G$. Then $G$ contains either a $U$-star or a $U$-comb.
\end{lemma}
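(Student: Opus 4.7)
The plan is to reduce to a spanning tree and dichotomise on local finiteness. Fix any spanning tree $T$ of $G$, and let $T^\ast$ be the minimal subtree of $T$ containing $U$, equivalently the union of all $T$-paths between pairs of vertices of $U$. By minimality, every component of $T^\ast - v$ meets $U$ for each $v\in V(T^\ast)$, and in particular every leaf of $T^\ast$ lies in $U$. It suffices to exhibit a $U$-star or $U$-comb inside $T^\ast$.

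If some $v\in V(T^\ast)$ has infinite $T^\ast$-degree, then $T^\ast-v$ has infinitely many components, and picking a $U$-vertex $u_i$ from each and taking the unique $v$--$u_i$ path in $T^\ast$ gives internally disjoint paths forming a $U$-star centred at $v$. Otherwise $T^\ast$ is locally finite, and since $|U|=\infty$ it is also infinite. Root $T^\ast$ at some $v_0\in U$ and build a ray $R=v_0v_1v_2\ldots$ greedily: having chosen $v_i$, let $v_{i+1}$ be any child of $v_i$ whose descendant subtree $T^\ast_{v_{i+1}}$ contains infinitely many $U$-vertices. Such a child exists because $v_i$ has finitely many children while $T^\ast_{v_i}\cap U$ is infinite, an invariant that propagates from the base case $T^\ast_{v_0}=T^\ast$ by a K\"onig-style pigeonhole.

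To extract the comb, first suppose $V(R)\cap U$ is infinite; then the trivial teeth at each $v_i\in U$ yield the $U$-comb. Otherwise, after truncating an initial segment of $R$, I may assume no $v_i$ lies in $U$ while $T^\ast_{v_i}\cap U$ remains infinite. For each $i$, the set $T^\ast_{v_i}\cap U$ decomposes as $(T^\ast_{v_{i+1}}\cap U)\sqcup\bigsqcup_{w}(T^\ast_w\cap U)$, where $w$ ranges over the off-ray children of $v_i$. If all but finitely many $v_i$ had every off-ray $T^\ast_w$ disjoint from $U$, then all but finitely many $U$-vertices of $T^\ast_{v_0}$ would lie on $R$, contradicting $V(R)\cap U=\emptyset$. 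Hence infinitely many $v_i$ admit an off-ray child $w_i$ with $T^\ast_{w_i}\cap U\ne\emptyset$; choosing $u_i\in T^\ast_{w_i}\cap U$ and the $v_i$--$u_i$ path in $T^\ast$ produces pairwise disjoint teeth attached at distinct spine vertices, yielding the $U$-comb. The main obstacle is the greedy choice of $R$: the invariant that $T^\ast_{v_i}\cap U$ stays infinite is precisely what makes the final pigeonhole dichotomous, and it is the only place where local finiteness of $T^\ast$ really enters.
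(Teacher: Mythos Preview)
Your proof is correct, but there is nothing to compare it against: the paper does not prove this lemma at all, it merely quotes it from Diestel's textbook as a standard tool. Your argument --- pass to a spanning tree, take the minimal subtree $T^\ast$ spanning $U$, and dichotomise on whether $T^\ast$ has a vertex of infinite degree, invoking a K\"onig-type argument in the locally finite case to build the spine --- is essentially the standard textbook proof.

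One cosmetic point: in the final case, the $v_i$--$u_i$ path in $T^\ast$ may meet $U$ before reaching $u_i$; to obtain a $U$--$R$ path in the strict sense you should truncate it at the first $U$-vertex after $v_i$. This changes nothing structurally, since the truncated teeth still lie in the pairwise disjoint off-ray subtrees $T^\ast_{w_i}$.
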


\section{Decomposing into planar graphs} \label{sec decomp}

The aim of this section is to prove the orientable case of \Tr{thm decomp Intro}, which will be used as a tool for the proof of \Tr{main thm}. (The non-orientable case will follow after we have proved \Tr{main thm}.)

\begin{definition} \label{def fin dec}
A \defi{decomposition} of a graph \g is a family $(G_i)_{i\in \ci}$ of subgraphs of \G, called the \defi{pieces}, \st\ $G= \bigcup_{i\in \ci} G_i$. We say that a decomposition $(G_i)_{i\in \ci}$ is \defi{finitary}, if $\ci$ is finite, and the intersection of any two distinct pieces is finite. Note that this means that at most finitely many vertices of \g lie in more than one piece.
\end{definition}

Let us collect a few lemmas for the proof of \Tr{thm decomp Intro}, and for later use. 

\begin{lemma} \label{lem S}
Let  $G \in \Sig$ and suppose $G'$ is obtained from \g by identifying two vertices $v,w\in V(G)$. Then $G' \in \Sig$.
\end{lemma}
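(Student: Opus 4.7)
The plan is to reduce to the case where $v$ and $w$ are adjacent in $G$, since identifying adjacent vertices is just edge contraction, which preserves $\Sig$. Writing $G^+$ for $G$ together with the (possibly new) edge $vw$, we have $G' = G^+/vw$, so it suffices to show $G^+\in\Sig$.

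That $\Sig$ is closed under edge contraction is straightforward: if $\tilde f\colon H\to\tilde S$ is an embedding into a compact orientable surface and $e=ab\in E(H)$, then the restriction $\tilde f|_{\{a,b,e\}}$ is a topological embedding, so $\tilde f(e)$ is a simple arc with distinct endpoints $\tilde f(a),\tilde f(b)$. Collapsing a simple arc in a surface is a homeomorphism, hence the induced map $H/e\to\tilde S$ is again an embedding in our sense (one checks the finite-subgraph condition).

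To show $G^+\in\Sig$ when $vw\notin E(G)$, we attach one handle to $S$ and draw the new edge through it. Choose disjoint closed topological disks $D_v\ni f(v)$ and $D_w\ni f(w)$ in $S$. Since $f(G)$ is a countable union of arcs and isolated points, it is nowhere dense in $S$; so we may pick small disjoint open disks $B_v\subset D_v$ and $B_w\subset D_w$ with $(B_v\cup B_w)\cap f(G)=\emptyset$. Let $S^+$ be obtained from $S$ by removing $B_v\cup B_w$ and gluing in an annulus $A$ along the resulting boundary circles, with the identification chosen so that $S^+$ is orientable; then $S^+$ is a compact orientable surface, and $f$ still embeds $G$ into $S^+$. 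To realize the new edge $vw$, we build a simple arc $\alpha=\alpha_v\cdot\alpha_A\cdot\alpha_w$ from $f(v)$ to $f(w)$ in $S^+$ that meets $f(G)$ only at its endpoints, where $\alpha_A$ runs through $A$, and $\alpha_v$ is a \emph{free arc} from $f(v)$ to $\partial B_v$ inside $D_v$, meeting $f(G)$ only at $f(v)$ (and $\alpha_w$ is symmetric).

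The main obstacle is the construction of the free arc $\alpha_v$. For $v$ of finite degree this is immediate, because $f$ restricted to the (finite) star at $v$ is a topological embedding, producing the usual spider-like picture near $f(v)$ with free sectors between consecutive edges. For $v$ of infinite degree the embedding may behave wildly at $f(v)$, so we proceed indirectly: walk a short distance from $f(v)$ along some edge $e$ incident to $v$ to an interior point $z\in f(e)$ at which a small disk in $S$ meets $f(G)$ in exactly a diameter arc of that disk, then step off $f(e)$ into the resulting free half-disk and route through $D_v\setminus f(G)$ to $\partial B_v$. The existence of such a regular point $z$ follows because edges $e'\neq e$ meet $f(e)$ only at the at-most-two shared endpoints of $e$ and $e'$, leaving the interior of $f(e)$ ``generically'' free of accumulating edges; in the worst case, we first compose $f$ with a small self-homeomorphism of $S$ supported near $f(v)$ to guarantee such a $z$ exists.
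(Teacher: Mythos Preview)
Your overall strategy coincides with the paper's: show $G^+:=G\cup\{vw\}\in\Sig$ by attaching a handle, then contract $vw$. The gap lies in the two claims surrounding what you correctly identify as the ``main obstacle''.

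First, the assertion that $f(G)$ is nowhere dense is false in general. Take $G=K_{1,\omega}$ with centre $v$, embedded with $f(v)$ at the origin and the edges drawn as straight radii to points on the unit circle at a dense set of angles. Each finite subgraph embeds topologically, so this is an embedding in the paper's sense, yet $f(G)$ is dense in the unit disc and there is no open $B_v$ near $f(v)$ disjoint from $f(G)$. (A countable union of nowhere dense closed sets is meagre, not nowhere dense.) Second, in this same example every interior point of every edge has other edges accumulating on it, so no ``regular point $z$'' with a free half-disc exists, and your step-off argument fails outright. Your proposed rescue---composing with a self-homeomorphism $h$ of $S$ supported near $f(v)$---cannot help: $h(f(G))$ is dense exactly where $f(G)$ was, since homeomorphisms preserve density. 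What is needed is a genuinely different embedding, not a reparametrisation of the given one.

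The paper handles precisely this difficulty by first replacing $f$ with a \emph{generous} embedding $g$ (a nontrivial re-embedding result quoted from another paper): one in which every edge $e$ has an open disc $D_e$ with $D_e\cap g(G)$ equal to the interior of $g(e)$. Generosity around an edge $e$ incident to $v$ then immediately supplies both a subdisc $D'_e\subset D_e$ disjoint from $g(G)$ at which to attach the handle, and an obvious free arc from $g(v)\in\partial D_e$ through $D_e\setminus g(e)$ to $\partial D'_e$. Without invoking (or reproving) such a re-embedding step, your argument is incomplete.
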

%%%%%%%%%
\begin{proof}
Let $\Gamma$ be a compact orientable surface into which \g embeds. If \g is finite, then it is easy to embed $G'$ into a surface $\Gamma'$ obtained from $\Gamma$ by adding a handle. By combining this observation with \Lr{lem emb cof} we deduce that $G'$ is embeddable into $\Gamma'$, and hence $G' \in \Sig$.
\end{proof}

\comment{
	To prove this we need to start with a nice enough embedding of $G$, which we now introduce.

Let $g:G \to \Gamma$ be an embedding of a countable graph \g into a compact surface. It was proved in \cite[Proposition 4.3.]{Universal} that we may assume $g$ to be \defi{generous} in the following sense. We say that $g$ is \defi{generous around vertices}, if \fe\ $v\in V(G)$ there is a topological open disc $D_v \subset \Gamma$ \st\ $D_v \cap g(V(G)) =g(v)$ and $D_v$ avoids $g(uw)$ for every edge $uw$ with $u,w\neq v$. Similarly, we say that $g$ is \defi{generous around edges}, if \fe\ $e=uv\in E(G)$ there is a topological open disc $D_e \subset \Gamma$ \st\ $D_e \cap g(G) = g(e) \sm \{g(u),g(v)\}$. We call $g$ \defi{generous} if it is generous  both around vertices and around edges.

%%%%%%%%%
\begin{proof}[Proof of \Lr{lem S}]
Let $g:G \to \Gamma$ be an embedding into a compact surface; as just mentioned, we may assume $g$ to be {generous}. 

Given $v,w\in V(G)$, pick distinct edges $e=vx, f=wy$; if no such $e,f$ exist we are in a trivial case which is left to the reader. Let $D_e,D_f$ be discs in $\Gamma$ witnessing the generosity of $g$.

Pick two closed topological subdiscs $D'_e\subset D_e, D'_f\subset D_f$ disjoint from $g(e),g(f)$, and modify $\Gamma$ into a new compact surface $\Gamma'$ by removing the interiors of $D'_e,D'_f$, and identifying their boundaries via an isomorphism $i:\partial D'_e \to \partial D'_f$. We can easily embed a \arc{v}{w}\ $A$ into  $\Gamma' \sm g(G)$ as follows. Let $A_1$ be a $v$--$\partial D'_e$~arc in $\cls{D_e} \sm g(e)$, and let $p\in \partial D'_e$ be the last point of $A_1$. Similarly, let $A_2$ be an arc in $\cls{D_f} \sm g(f)$ from $i(p)$ to $w$. Then $A_1\cup A_2$ is, or contains, the desired arc $A$. (If $A_1\cap A_2$ contains more points than $p$, which can only happen if $D_e\cap D_f \neq \emptyset$, then we find the desired arc $A$ by discarding some of $A_1, A_2$.)

This construction proves that the graph $G''$ obtained from $G$ by adding a $vw$-edge lies in $\Sig$. Since $G'=G''/vw$, we deduce that $G'\in \Sig$ by \Lr{lem emb cof}. %e.g.\ by using the fact that a countable graph embeds into a surface $\Gamma'$ if each of its finite subgraphs does \cite[\S 5]{mohar88}. 
	\end{proof}
}

The power of \Lr{lem S} lies in our ability to apply it repeatedly. This way we obtain
\begin{corollary} \label{cor S plus}
Let \g be a countable graph admitting a finitary decomposition $G_1,\ldots, G_k$. If each $G_i$ lies in \Sig, then so does \G. %If each $G_i$ lies in \SU\ %$(U\cap V(G_i))$-\OP\ 
%for some $U\subseteq V(G)$, then $G\in \SU$.
\end{corollary}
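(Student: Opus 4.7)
The plan is to reduce the statement to a finite number of applications of \Lr{lem S}. First I would form the disjoint union $G' := G_1 \sqcup \cdots \sqcup G_k$, where each $G_i$ is treated as a separate copy. Since each $G_i$ embeds into a compact orientable surface $\Gamma_i$, the disjoint union $G'$ embeds into the connected sum $\Gamma_1 \# \cdots \# \Gamma_k$: each $G_i$, being 1-dimensional, can be chosen to avoid an open disc in $\Gamma_i$ that is used to form the connected sum. Hence $G' \in \Sig$.

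Next I would observe that $G$ can be obtained from $G'$ by a finite sequence of vertex identifications, followed by the deletion of finitely many edges. Indeed, since the decomposition is finitary, the set $W := \bigcup_{i \neq j} V(G_i \cap G_j)$ is finite, and each vertex $v \in W$ lies in at most $k$ pieces; so the total number of identifications of pairs of copies of vertices of $W$ needed to pass from $G'$ to a graph with vertex set $V(G)$ is finite. By \Lr{lem S} each such identification preserves membership in $\Sig$, and so an easy induction on the number of identifications yields a graph $G'' \in \Sig$.

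Finally, $G''$ may contain duplicate edges: whenever an edge $uv$ lies in $G_i \cap G_j$ for $i \neq j$, its two copies in $G'$ become parallel edges in $G''$. Since the decomposition is finitary, there are only finitely many such duplicated edges, and $G$ is obtained from $G''$ by deleting one copy of each. Deleting edges preserves embeddability, so $G \in \Sig$ as desired.

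The main obstacle, were \Lr{lem S} not already established, would be controlling a global embedding in a compact surface while carrying out the vertex identifications; but because \Lr{lem S} already handles a single identification, what remains is only the bookkeeping step of noting that the finitary hypothesis forces only finitely many identifications and finitely many edge-deletions. The argument does not extend to countably infinite decompositions, which is precisely why the finitary hypothesis appears in the statement.
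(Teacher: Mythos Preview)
Your proposal is correct and follows essentially the same approach as the paper: form the disjoint union of the pieces, then apply \Lr{lem S} finitely many times to identify the finitely many shared vertices. You supply a bit more detail than the paper does---the connected-sum justification for why the disjoint union lies in \Sig, and the clean-up of duplicated edges---but the core argument is identical.
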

%%%%%%%%%
\begin{proof}
%Let $G_1,\ldots, G_k$ be the pieces of the decomposition. 
%For the first statement, assume each $G_i$ lies in \Sig. 
Starting from the disjoint union of copies of each $G_i$, we can repeatedly apply \Lr{lem S} to identify pairs of vertices corresponding to the same vertex of $G_i$, remaining in \Sig\ after each step. Since the decomposition is finitary, we end up with $G$ after finitely many such identifications.
\end{proof}

Our last lemma is a classical result of Youngs about cellular embeddings. A \defi{face} of an embedding $g: G \to \Gamma$ of a graph into a surface is a component of $\Gamma \sm g(G)$.

\begin{theorem}[\cite{Youngs}] \label{Youngs}
Let $\Gamma$ be a closed orientable surface, and let $G$ be a finite connected graph that embeds into $\Gamma$ but does not embed into a closed orientable surface of smaller genus. Then for every embedding $g: G \to \Gamma$, each face of $g$ is homeomorphic to an open disc. 
\end{theorem}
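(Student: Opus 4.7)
My plan is to argue by contradiction: suppose the embedding $g$ has a face $F_0$ that is not homeomorphic to an open disc. I will then produce a simple closed curve $\gamma \subset F_0$ (disjoint from $g(G)$) whose presence forces $G$ to embed into a closed orientable surface of strictly smaller genus than $\Gamma$, contradicting the minimality hypothesis.

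The first step is to find such a $\gamma$. Since $F_0$ is a connected, open, orientable $2$-manifold, and every simply connected open subset of a surface is homeomorphic to an open disc, $\pi_1(F_0)$ is non-trivial. A standard result in surface topology then produces a simple closed curve $\gamma \subset F_0$ that is not null-homotopic in $F_0$. Concretely, the classification of open orientable surfaces splits the failure of $F_0$ to be a disc into two archetypal cases: either $F_0$ has positive genus, in which case I would take $\gamma$ to be the meridian of a handle; or $F_0$ has more than one end (equivalently, the boundary walks of $F_0$ in $g(G)$ form more than one closed walk), in which case I would take $\gamma$ to be a simple closed curve in $F_0$ separating two such ends.

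Next I would case-split on the behaviour of $\gamma$ in $\Gamma$. If $\gamma$ is non-separating in $\Gamma$, I cut $\Gamma$ along $\gamma$ and cap off the two resulting boundary circles with discs; the result is a closed orientable surface $\Gamma'$ of genus one less than $\Gamma$, and since $g(G)$ was disjoint from $\gamma$ the embedding $g$ descends to an embedding $G \hookrightarrow \Gamma'$, contradicting minimality. If instead $\gamma$ separates $\Gamma$, write $\Gamma \smallsetminus \gamma = A \sqcup B$; after dealing with components of $G$ one at a time I may assume $g(G) \subseteq B$, so that $A$ lies entirely inside $F_0$ and has frontier $\gamma$. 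Because $\gamma$ is not null-homotopic in $F_0$, the piece $A$ is not an open disc, hence its closure has genus at least $1$; consequently the closed surface obtained from $\overline{B}$ by capping $\gamma$ with a disc has genus strictly smaller than $\Gamma$. Again $g$ descends, yielding the desired contradiction.

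The main obstacle I expect is the first step, namely justifying the existence of the essential simple closed curve $\gamma$ in the non-compact surface $F_0$. Once one appeals to the classification of open orientable $2$-manifolds (or, alternatively, takes $\gamma$ in the handle / between ends by hand as above), the rest is a routine application of the classification of compact orientable $2$-manifolds with boundary, together with the additivity of genus under connected sum along a separating essential curve, to guarantee that the new closed surface has smaller genus than $\Gamma$.
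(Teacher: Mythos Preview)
The paper does not prove this statement at all: it is quoted as a classical result of Youngs with a citation, and used as a black box in the proof of \Tr{thm decomp}. So there is no proof in the paper to compare your attempt against.

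That said, your sketch is essentially the standard argument for Youngs' theorem and is sound in outline. Two points deserve tightening. First, the reduction ``dealing with components of $G$ one at a time'' in the separating case is not as innocent as it looks: if $G$ is disconnected and has components on both sides of $\gamma$, you need the additivity of orientable genus for disjoint unions (the Battle--Harary--Kodama--Youngs theorem) to conclude that one side carries a sub-embedding of strictly sub-minimal genus; alternatively, reduce to connected $G$ at the outset via that same additivity result. Second, in the separating case you should make explicit that $A \subset F_0$ (which follows since $A$ is connected, disjoint from $g(G)$, and its frontier $\gamma$ lies in the open set $F_0$), so that ``$\gamma$ not null-homotopic in $F_0$'' really forces $\overline{A}$ to have positive genus. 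With these clarifications your argument goes through.
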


We are now ready for the proof of the main result of this section, which we restate for convenience:
\begin{theorem}\label{thm decomp}
A countable graph \g embeds into a compact, orientable, surface \iff\ it admits a finitary decomposition into planar pieces.
\end{theorem}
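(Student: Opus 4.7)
The direction $(\Leftarrow)$ is immediate: each planar piece $G_i$ embeds into the sphere $\BS^2\in\Sig$, so $G_i\in\Sig$, whence \Cr{cor S plus} yields $G\in\Sig$.

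For $(\Rightarrow)$, I would fix a generous embedding $g:G\to\Gamma$ and, by re-embedding if necessary, assume that $\Gamma$ has the smallest possible genus among compact orientable surfaces admitting an embedding of $G$. The plan is to produce a finite subgraph $H\subseteq G$ such that $g|_H$ is \emph{cellular with simple face-boundaries}: every face $F$ (component of $\Gamma\setminus g(H)$) is an open disc and its boundary walk in $H$ is a simple cycle. Granted such $H$, each closure $\overline F\subseteq\Gamma$ is a closed disc in $\Gamma$, and for each face $F$ I define the piece
\[G_F := \{v\in V(G):g(v)\in\overline{F}\}\cup\{e\in E(G):g(e)\subseteq\overline{F}\}.\]
Via $g$ each $G_F$ embeds in the disc $\overline{F}$, so $G_F$ is planar. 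The finitely many pieces $\{G_F\}$ cover $G$ since $\Gamma=\bigcup_F\overline F$, and any two distinct pieces intersect only in vertices and edges of the finite graph $H$, hence in a finite set; the decomposition is therefore finitary.

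The main obstacle is constructing~$H$. I would proceed iteratively, starting from some finite $H_0\subseteq G$ and, at each step, locating a \emph{defect} of the current $g|_H$—either a non-disc face, or a face whose boundary walk is not a simple cycle—and enlarging $H$ by a single edge or short $G$-path that resolves it. To fix a non-disc face $F$, I would find a non-contractible simple closed curve in $F$ and an edge of $G$ meeting it, and add this edge to $H$; the minimum-genus hypothesis forces such an edge to exist, because otherwise one could re-embed $G$ in a surface of strictly smaller genus. To fix a face whose boundary walk repeats a vertex $v$, I would add a $G$-path inside $F$ between two occurrences of $v$ on the walk, whose existence is likewise forced by minimum genus. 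A non-negative integer complexity such as $\Phi(H):=\sum_F(2-\chi(F))+\sum_F[\partial F\text{ not simple}]$ is bounded in terms of the genus of $\Gamma$ and the current size of $H$, and strictly decreases at every step, so the iteration terminates.

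The hardest part of this plan is justifying the iterative step: both producing the required $G$-edge or $G$-path inside a defective face from the minimum-genus hypothesis, and verifying the strict monotonicity of $\Phi$. Once $H$ is in hand, the remainder of the proof—checking planarity of each piece and finiteness of pairwise intersections—is essentially routine bookkeeping.
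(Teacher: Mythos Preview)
Your backward direction matches the paper's. The forward direction, however, has a real gap at the step where you ``fix'' a disc face $F$ whose boundary walk repeats a vertex~$v$.

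You argue that minimum genus forces a $G$-path inside $F$ separating the two occurrences of $v$, by analogy with the non-disc case. The analogy breaks: once $F$ is a disc, an arc in $F$ joining the two boundary occurrences of $v$ closes up (at $v$) to a simple closed curve that bounds a sub-disc of $F$, hence is \emph{contractible} in $\Gamma$. Cutting along a contractible curve cannot lower the genus, so the absence of $G$-edges crossing it yields no contradiction. Concretely, let $G$ be $K_5$ with infinitely many triangles attached at one vertex $v$, embedded in the torus. Any finite $H\subseteq G$ containing $K_5$ and at least one triangle has $v$ as a cut-vertex; but a graph with a cut-vertex has \emph{no} closed-2-cell embedding in any surface, so some face of $g|_H$ necessarily repeats $v$. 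The only $G$-edges inside that face are edges of further triangles, all incident with $v$; adding them only increases the multiplicity of $v$ on the offending boundary. Your iteration never terminates, and your $\Phi$ does not decrease (indeed, as written, the first sum contributes $1$ per disc face, so splitting one disc into two already raises it). The paper flags exactly this obstruction, remarking that arranging for each $\overline F$ to be a closed disc amounts to Jaeger's strong embedding conjecture.

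The paper's argument sidesteps simple face boundaries entirely. It takes a finite $H$ with $\gamma(H)=\gamma(G)$ and uses Youngs' theorem to get disc faces. Then, using generosity, it splits each boundary vertex $v$ of a face $F$ into one copy per sector of $D_v\cap F$; the resulting auxiliary graph $G'_F$ is planar with all these copies on its outer face, and the non-simplicity of $\partial F$ is now encoded as several sector-copies of the same $v$. Inside each $F$ one then adds a finite forest $T_F$ connecting all boundary copies; in each face $F'$ of the refined $H'=H\cup\bigcup_F T_F$, at most \emph{two} boundary copies lie in any one component of $G'_{F'}$, and identifying a single pair of outer-face vertices of a plane graph keeps it planar. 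That two-stage trick---sector-splitting followed by the forest---is the substantive idea your plan is missing.
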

%%%%%%%%%
\begin{proof}%[Proof of \Tr{thm decomp}]
The backward direction follows immediately from \Cr{cor S plus}. 

For the forward direction, it suffices to consider the case where \g is connected: at most finitely many of the components of \g can have positive genus by additivity of the genus \cite{BHKY}, and so we can work with each such component separately. 

Let $H$ be a finite subgraph of $G$ with $\gamma(H)=\gamma(G)$. Such an $H$ exists by \Lr{lem emb cof}. We may assume that $H$ is connected since \g is. %Here, we used the well-known fact that a countable graph $G$ embeds into a surface $\Gamma$ if each of its finite subgraphs $H$ does \cite[\S 5]{mohar88}. %We can assume for convenience that $H$ is an induced  subgraph of $X$. 

Let $g: G \to \Gamma$ be an embedding into the closed orientable surface of genus $\gamma(G)$, and recall that we may assume $g$ to be {generous}. Note that $g$ induces an embedding $g_H: H \to \Gamma$ by restriction. By Youngs' \Tr{Youngs}, every face  $F$ of $g_H$ is homeomorphic to an open disc. Therefore, the subgraph $G_F:= g^{-1}(F)$ of $G$ embedded into $F$ is planar. Let $\cls{G_F}$ denote the subgraph of \g induced by $G_F$ and all its neighbours, which must lie on $\partial F \subset H$. If each $\cls{G_F}$ was planar, which would be the case if the closure of each $F$ was homeomorphic to a closed disc, then we could decompose \g into the $G_F$'s and be done. But the existence of such an embedding $g$ is Jaeger's strong embedding conjecture \cite{jaeger}, which is open. Instead, we will find a finitary decomposition of each $G_F$ into planar subgraphs as follows.

Let $(D_v)_{v\in V(H)}$ be open topological discs witnessing the generosity of $g$. We can assume that these discs are pairwise disjoint, by choosing sub-discs if needed. Note that $g(H)$ separates each $D_v$ into a finite number of components, each itself an open topological disc, which components we will call $D_v$-sectors. Moreover, $D_v \cap F$ is the union of some of these $D_v$-sectors \fe\ $v\in V(H)$ and every face $F$ of $H$. Define a plane supergraph $G'_F$ of $G_F$ as follows. For each $v\in V(H) \cap \partial F$, and each $D_v$-sector $s\subset F$, we introduce a new vertex $v_s$ of $G'_F$, embed $v_s$ into $s$, and reroute the edges incident with $s$ so that they attach to $v_s$ instead of $v$. 
%contract $s$ into a point $v_s$ which we declare to be a vertex of $G'_F$. Note that all edges of \g intersecting $s$ thereby become edges of $G'_F$ joining $v_s$ to $G_F$. 
We call these new vertices $v_s$ the \defi{boundary vertices} of $G'_F$, and denote their set by $\cb_F$. 

Note that $G'_F$ is planar by construction. It is easy to see that 
\labtequ{identify}{$G'_F$ remains planar if we identify any one pair of boundary vertices.}
Indeed, all the boundary vertices of $G'_F$ lie on its outer face, and identifying a pair of vertices on the outer face of any plane graph results in a planar graph.

By definition, $\cls{G_F}$ can be obtained from $G'_F$ by identifying each set of boundary vertices corresponding to $D_v$-sectors of the same $v\in V(H) \cap \partial F$ to one vertex. As \eqref{identify} applies to one pair only, it is not enough to guarantee that $\cls{G_F}$ is planar. Therefore, we decompose it further as follows.

For every face $F$ of $g$ as above, we pick a minimal forest $T_F$ in  $G'_F$ \st\ any two boundary vertices  that lie in the same component of $G'_F$ also lie in the same component of $T_F$. Since $\cb_F$ is finite, the existence of $T_F$ is easily guaranteed. Recall that each boundary vertex $v_s$ corresponds to some vertex $v$ of $H$; we obtain $T'_F$ by replacing each $v_s\in \cb_F$ by its corresponding $v$.  Let $H':= H \cup \bigcup_F T'_F$. Note that $T'_F$ is a subgraph of \G, hence so is $H'$. Moreover, $H'$ is connected since $H$ is. Easily, any face $F'$ of $H'$ is contained in a face $F$ of $H$. We repeat the above constructions to define $G'_{F'}$ and $\cls{G_{F'}}$ in analogy with $G'_{F}$ and $\cls{G_{F}}$. 

We claim that each component $C$ of $G'_{F'}$ contains at most two vertices of $\cb_F$. Indeed, any triple of vertices of $\cb_F \cap C$ would be contained in a subtree of $T_F$, which subtree would separate $F$ into three regions, none of which can contain the whole triple. 

If no pair of vertices in $G'_{F'} \cap \cb_F$ corresponds to the same $v\in  V(H)$, then $G'_{F'}$ is a planar subgraph of \g (and $\cls{G_F}$).  In this case we just accept  $G^*_{F'}:= G'_{F'}$ as a piece of our decomposition. If there is such a pair $v_{s_1},v_{s_2}$, then by the previous remark this pair is unique for each component $C$ of $G'_{F'}$. By identifying each such pair into a vertex we thus obtain a subgraph $G^*_{F'}$ of \G, which by \eqref{identify} is planar. Here we used the fact that a graph is planar if each of its components is. 

Note that the union of all the $G^*_{F'}$, where $F'$ ranges over all faces of $H'$, contains all edges in $E(G) \sm E(H')$. Since $H'$ is finite, its edge-set forms a finitary decomposition of $H'$. Thus we obtain the desired decomposition of \g as  the set of graphs  $G^*_{F'}$ united with the set of edges of $H'$. This is a finitary decomposition, since the intersection of any two of its elements is contained in $H'$. 
\end{proof}

\subsection{Implications of \Tr{thm decomp}} \label{sec impl}

We remark that \Tr{thm decomp} allows us to extend many results obtained for planar graphs, e.g.\ those of \cite{intersection,planarPB,UKtrans}, to graphs in \Sig. Motivated by the fact that some such results (e.g.\ \cite{HuNaUni,kozPPP}) only apply to graphs with vertex-accumulation-free embeddings into the plane, we will now formulate and prove a refinement of \Tr{thm decomp} that takes accumulation points into account. This refinement is not needed for the proof of \Tr{main thm}, and the reader may skip the rest of this section.
\medskip

We let \defi{$\Sig^*$} denote the class of countable graphs \g that embed into a compact orientable surface $\Gamma$ so that there are at most finitely points of $\Gamma$ that are accumulation points of vertices of \G. We can always choose our embeddings so that no such accumulation point lies in the image of \G. We define \defi{$\pln^*$} analogously, with $\Gamma$ replaced by $\BS^2$. Finally, we  say that \g is Vertex-Accumulation-Free, or \defi{\vapf} for short, if it admits an embedding in $\R^2$ with no accumulation point of vertices. We will prove

\begin{corollary}\label{cor vapf}
A countable graph \g lies in $\Sig^*$ \iff\ it admits a finitary decomposition into \vapf\ pieces.
\end{corollary}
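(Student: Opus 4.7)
The plan is to refine the proof of \Tr{thm decomp} by additionally tracking the finitely many vertex-accumulation points $p_1,\dots,p_m$ of a generous witness embedding $g\colon G\to\Gamma$.

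For the backward direction, observe that every \vapf\ graph embeds in $\R^2\subseteq \BS^2$ with at most one accumulation point of vertices (at infinity), and therefore lies in $\Sig^*$. To glue the pieces back together we use an analogue of \Cr{cor S plus} for $\Sig^*$, which in turn rests on an analogue of \Lr{lem S}: the surgery performed in its proof is supported inside two small topological discs $D'_e, D'_f$ around two fixed edges, and so does not introduce any new accumulation point of vertices. Iterating this analogue over the given finitary decomposition into \vapf\ pieces yields $G \in \Sig^*$.

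For the forward direction we run the construction of the proof of \Tr{thm decomp} with one crucial modification: we enlarge the initial finite subgraph $H$ (still with $\gamma(H) = \gamma(G)$) to a larger finite subgraph $\widetilde H\subseteq G$ with the additional property that each face of $g(\widetilde H)$ contains at most one $p_j$ in its interior. Running the remaining steps of \Tr{thm decomp} with $\widetilde H$ in place of $H$ then produces a finitary decomposition of $G$ into planar pieces $G^*_{F'}$, each embedded in its sub-face with at most one accumulation point of vertices. A piece with \emph{no} accumulation point is already \vapf, and a piece with a single accumulation point $p$ becomes \vapf\ after a M\"obius transformation of $\BS^2$ sending $p$ to infinity: the transformed embedding lands in $\BS^2\sm\sgl{\infty}\cong \R^2$ with no accumulation points of vertices.

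The main obstacle will be producing the enlargement $H\to\widetilde H$: given two accumulation points $p_i, p_j$ lying in a common face of $g(H)$, we must find a finite subgraph of $G$ whose addition to $H$ separates $p_i$ from $p_j$ in the face structure. We plan to argue this by a case analysis on the in-face subgraph $G_F=g^{-1}(F)$ from the proof of \Tr{thm decomp}: if $p_i$ and $p_j$ lie in distinct components of $G_F$ the separation is automatic, while if they lie in a common component we invoke a Menger/end-theoretic argument to produce a finite separator, which by planarity of $G_F$ realises as a finite chord-path of $F$ topologically separating the two accumulation points. The structural restrictions imposed by $G \in \Sig^* \subseteq \Sig$, notably the exclusion of $\Sig_1,\dots,\Sig_8$ given by \Tr{main thm}, should be precisely what rules out the pathological configurations (dense accumulation, infinitely many vertex-disjoint paths between the two accumulation regions, etc.) that would otherwise obstruct the existence of such a separator.
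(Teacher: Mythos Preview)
Your backward direction is fine and matches the paper's: embed each \vapf\ piece in $\BS^2$ with a single accumulation point and glue via the handle-surgery of \Lr{lem S}, which is local and introduces no new accumulation points.

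The forward direction, however, has a genuine gap: the enlargement $\widetilde H$ you want need not exist. The problem is that you are trying to separate two \emph{topological} accumulation points $p_i,p_j$ inside a face by the image of a \emph{finite subgraph of $G$}, and nothing about $G\in\Sig^*$ forces such a separator to exist. Take $G=K_{1,\omega}$, which is \vapf\ and hence certainly in $\Sig^*$, and embed it generously in $\BS^2$ so that the leaves accumulate at two distinct points $p_1,p_2$. Every finite subgraph of $G$ is a finite tree, so its image has connected complement in $\BS^2$; thus no choice of $\widetilde H$ puts $p_1$ and $p_2$ into distinct faces. The same phenomenon already breaks your ``easy'' case: two disjoint rays converging to distinct points $p_1,p_2$ lie in distinct components of $G_F$, yet no finite subgraph separates $p_1$ from $p_2$ topologically. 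The excluded minors of \Tr{main thm} do not help here, since these examples are \vapf\ and contain none of the $\Sig_i$. The difficulty is not a pathology of $G$ but of the particular witness embedding $g$, which your argument has fixed at the outset.

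The paper avoids this by decoupling the two steps. It first runs \Tr{thm decomp} unchanged, observing that each planar piece $G^*_{F'}$ inherits only finitely many vertex-accumulation points and hence lies in $\pln^*$. It then reduces to the purely planar statement that every $H\in\pln^*$ decomposes finitarily into \vapf\ pieces, and proves this by induction on the number of accumulation points using Thomassen's characterisation (\Lr{lem vapf}): if $H$ is not \vapf, then in \emph{any} planar embedding some cycle $C$ of $H$ has infinitely many vertices on both sides, and since $C$ is finite the two closed sides each carry strictly fewer accumulation points. The cycle $C$ is exactly the separating object whose existence you could not guarantee; the point is that it comes from \Lr{lem vapf} applied to the planar piece, not from trying to separate accumulation points directly in the original surface embedding.
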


Our proof will be a combination of \Tr{thm decomp} with the following basic fact about \vapf\ graphs:
\begin{lemma}[{\cite[LEMMA~7.1]{thoPla}}] \label{lem vapf}
A countable graph $H$ is \vapf,  \iff\ some embedding $g: H\to \BS^2$ has the property that \fe\ cycle $C$ one of the two sides of $g(C)$ contains only finitely many vertices.
\end{lemma}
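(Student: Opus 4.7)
\medskip
\noindent\textbf{Proof proposal for \Lr{lem vapf}.}

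\emph{Forward direction (VAP-free $\Rightarrow$ finite-side embedding).} This direction is routine. Given a VAP-free embedding $f:H\to\R^2$, identify $\R^2$ with $\BS^2\setminus\{\infty\}$ via one-point compactification to obtain an embedding $g:H\to\BS^2$ whose image avoids $\infty$. Let $C$ be any cycle in $H$. Then $g(C)$ is a Jordan curve in $\BS^2$, and by the Jordan curve theorem $\BS^2\setminus g(C)$ has two components; let $R$ be the one \emph{not} containing $\infty$. Then $\cls R\subset\R^2$ is compact, so any infinite subset of $g(V(H))\cap R$ would have an accumulation point in $\cls R\subset\R^2$ by Bolzano--Weierstrass, contradicting VAP-freeness of $f$. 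Hence $R$ contains only finitely many vertices.

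\emph{Backward direction (finite-side embedding $\Rightarrow$ VAP-free).} This is the substantive direction. Given $g:H\to\BS^2$ satisfying the finite-side property, we must produce a VAP-free embedding $f:H\to\R^2$. If $V(H)$ is finite, the conclusion is trivial, so assume $V(H)$ is infinite. By compactness of $\BS^2$, the set $A\subseteq\BS^2$ of accumulation points of $g(V(H))$ is nonempty. The central observation is: \emph{every $p\in A$ lies on the infinite side of every cycle $g(C)$}. For if $p$ were on the finite side, then the infinitely many vertices converging to $p$ would almost all lie on the finite side (taking a small neighbourhood of $p$ contained in that open side), contradicting the hypothesis. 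The plan is to prove the following \emph{main claim}: $A$ is a single point $p^\ast$, and $p^\ast\notin g(H)$. Once this is established, the stereographic projection $\pi:\BS^2\setminus\{p^\ast\}\to\R^2$ composed with $g$ yields the desired VAP-free embedding $f=\pi\circ g:H\to\R^2$, since by construction no vertex accumulates anywhere in $\R^2$.

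\emph{Proof plan for the main claim.} First, one reduces to the case that $H$ is $2$-connected via a block decomposition: accumulation points of vertices of $H$ must be accumulation points for some block, and gluing VAP-free embeddings of blocks at cut vertices (spreading subsequent blocks into small free regions) preserves VAP-freeness. For the $2$-connected case, suppose for contradiction that there exist two distinct accumulation points $p_1\neq p_2$ in $A$. Choose disjoint closed topological discs $D_1\ni p_1$, $D_2\ni p_2$ whose boundaries avoid $g(V(H))$ and contain infinitely many vertices in their interiors. Using $2$-connectedness of $H$, select vertices $u_i$ with $g(u_i)\in D_i$ and two internally disjoint $u_1$--$u_2$ paths $P,P'$ in $H$, forming a cycle $C=P\cup P'$. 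If $g(C)$ separates $p_1$ from $p_2$ in $\BS^2$, both sides contain infinitely many vertices, contradicting the finite-side property. The hard part is arranging $P,P'$ so that $g(C)$ actually separates $p_1$ from $p_2$: careful choice of $u_1,u_2$ and of the two paths, together with a topological argument inside $D_1,D_2$ and along the $\BS^2$-arcs $g(P), g(P')$, should achieve this.

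\emph{Main obstacle.} The principal technical difficulty is the last step: guaranteeing that some pair of internally disjoint paths produces a \emph{separating} cycle. A naive Menger-type argument gives a cycle through $D_1$ and $D_2$ but leaves open whether both of $p_1,p_2$ lie in the bounded region it defines or the same region. The remedy is to exploit the planar topology carried by $g$: among all pairs of internally disjoint $u_1$--$u_2$ paths, pick one that together bound a disc in $\BS^2$ containing one of $p_1,p_2$ in its interior; such a pair exists because the infinitely many vertices accumulating at $p_1$ are linked into the rest of $H$ through the planar embedding, forcing some pair of paths to pass on opposite sides of a small arc separating $p_1$ from $p_2$. Once the main claim is in place, the rest of the backward direction is immediate, as outlined above.
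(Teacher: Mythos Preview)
The paper does not prove this lemma; it is quoted from Thomassen \cite{thoPla} and used as a black box. So there is no ``paper's own proof'' to compare against, and your proposal must be judged on its own merits.

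Your forward direction is correct and standard.

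Your backward direction has the right overall shape but contains genuine gaps that you have not closed:
\begin{itemize}
\item The block-decomposition reduction is not as innocent as you suggest. An accumulation point of $g(V(H))$ need \emph{not} be an accumulation point of the vertices of any single block: if $H$ has infinitely many blocks (think of a ray, or a tree of triangles), vertices from distinct blocks can accumulate at a common point. So the sentence ``accumulation points of vertices of $H$ must be accumulation points for some block'' is false in general, and the reduction needs a different argument (or one must construct a new embedding from scratch rather than massage $g$).
\item In the 2-connected case you correctly identify the crux --- producing a cycle $C$ with $g(C)$ separating $p_1$ from $p_2$ --- but you do not actually prove it. Two internally disjoint $u_1$--$u_2$ paths may well bound a disc containing neither $p_i$, and your final paragraph (``forcing some pair of paths to pass on opposite sides'') is an assertion, not an argument. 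One workable route is iterative: start with any cycle $C_0$, note that $p_1,p_2$ lie in its infinite side $S_0$, then use 2-connectedness to add an ear through a vertex deep inside $D_1\cap S_0$, thereby splitting $S_0$ into two regions; if $p_1,p_2$ still share a region, repeat. Making this terminate (or converge) requires care you have not supplied.
\item You never justify the second half of your main claim, that $p^\ast\notin g(H)$. Under the paper's definition of embedding (only finite subgraphs are required to embed topologically), vertices \emph{can} accumulate at the image of a vertex or at an interior point of an edge. If $p^\ast\in g(H)$, stereographic projection from $p^\ast$ destroys the embedding. This too needs an argument (e.g.\ in the 2-connected case, if $p^\ast=g(v)$, route two independent paths from $v$ around a small disc at $p^\ast$ to trap infinitely many vertices on a finite side).
\end{itemize}
In short, the strategy is reasonable, but each of the three steps above is a real gap rather than a routine detail.
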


%%%%%%%%%
\begin{proof}[\Pof \Cr{cor vapf}]
The proof of the backward direction follows the lines of that of  \Cr{cor S plus}, the only difference being that we start by embedding each piece into $\BS^2$ with at most one accumulation point of vertices.

\medskip
For the forward direction, given $\g \in \Sig^*$, we first apply 
\Tr{thm decomp} to obtain a finitary decomposition of \g into planar pieces, and our proof guarantees that each piece lies in $\pln^*$. Thus it now suffices to prove that each $H\in \pln^*$ admits a finitary decomposition into \vapf\ pieces. This is easy using \Lr{lem vapf}: if $H$ has $k$ accumulation points in some embedding $g$, but is not itself \vapf, then some cycle $C$ decomposes it into two pieces each embedded with fewer than $k$ accumulation points, and our result follows by induction on $k$. 
\end{proof}

\section{Graphs that have a property up to finitely many flaws} \label{sec AF} 

This section introduces classes of graphs that have a property up to finitely many `flaws', and basic techniques for finding their excluded minors. This will suffice to prove the analogue of Theorem 1.1 for locally finite graphs.

Given a minor-closed family \cc\ of infinite graphs, one can define classes of graphs that are \defi{almost} in \cc\ in the following sense. 

\begin{definition} \label{def almost} Let \defi{\rmv{\cc}} (respectively, \defi{\rme{\cc}}) denote the class of graphs \G, \st\ by removing finitely many vertices (resp.\ edges) from \g we obtain a graph belonging to \cc. Similarly, we let  \defi{\rmce{\cc}} denote the graphs that belong to \cc\ after contracting finitely many edges. 

It is easy to see that $\rmce{(\rme{\cc})} = \rme{(\rmce{\cc})}$ \fe\ \cc, and we will simply write $\rmece{\cc}$ instead.
\end{definition}

The following examples show that neither of $\cc_{\mathrm{/E}},\cc_\mathrm{E}$ is contained in the other in general. 

Example 1: Let $M$ denote the graph %of \fig{figMop}, i.e.\ 
consisting of a ray emanating from the centre of an infinite star $K_{1,\omega}$ (we could call $M$ the infinite mop), and let $\cc:= \forb{M}$. Then $\cc_{\mathrm{/E}}=\cc \subsetneq \cc_\mathrm{E}$, because $\cc_\mathrm{E}$ contains $M$ while $\cc$ does not. 

Example 2: It is easy to prove $\Sig= \Sig_\mathrm{E}$ similarly to \Lr{lem S}. But $\Sig \subsetneq \Sig_\mathrm{/E}$, because $K_{3,\omega} \in \Sig_\mathrm{/E}$.

\begin{definition} \label{def omdot}
We write %$G \dot{\cup}  H$  for the disjoint union of two graphs $G,H$, and 
\defi{$\omdot H$} for the disjoint union of countably infinitely many copies of a graph $H$. If $H$ is  vertex-transitive, we let \defi{$\bigvee H$} denote the graph obtained from $\omdot H$ by picking one vertex from each copy of $H$ and identifying them (by vertex-transitivity, it does not matter which vertices we pick).
\end{definition} 
 For example, $\bigvee K_3$ is a \defi{bouquet of triangles}, i.e.\ an infinite union of triangles having exactly one vertex in common.

\medskip

The next proposition provides the excluded minors of the class $\frsE$ of `almost forests' (the analogous result for $\frsCE$ is given in \Sr{sec AFR}).
Although it is not formally needed, we include it here as a gentle introduction to the techniques we will later use to prove our main result (\Tr{main thm}).

\begin{proposition} \label{prop FE}
Let $\frs$ denote the class of countable forests. Then $\frsE= \forb{\omdot K_3, \bigvee K_3, K_{2,\omega}}$.
\end{proposition}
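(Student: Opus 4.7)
The forward direction is immediate: each of $\omdot K_3, \bigvee K_3, K_{2,\omega}$ contains infinitely many pairwise edge-disjoint cycles (the triangles in the first two; the 4-cycles $v u_i w u_{i+1}$ in the third), so no finite set of edges can meet them all. Since $\frsE$ is minor-closed---both deletion and contraction of an edge preserve the property of becoming a forest after removing finitely many further edges---it follows that $\frsE \subseteq \forb{\omdot K_3, \bigvee K_3, K_{2,\omega}}$.

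For the reverse inclusion, suppose $G \notin \frsE$. Observe first that $G \notin \frsE$ is equivalent to $G$ containing an infinite family $C_1, C_2, \ldots$ of pairwise edge-disjoint cycles: any finite $F \subseteq E(G)$ meets at most $|F|$ edge-disjoint cycles, and conversely, whenever $G - \bigcup_{j\leq i} E(C_j)$ is not a forest (which, by assumption, it never is), it contains a further cycle $C_{i+1}$ edge-disjoint from the previous ones. Passing to a connected component carrying infinitely many $C_i$, we may further assume $G$ is connected.

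We extract the desired minor via two greedy rounds followed by an application of the star-comb lemma. \emph{Round 1:} if every vertex of $G$ lies in only finitely many $C_i$, then since $V(C_1)$ is finite, only finitely many other $C_j$ meet $C_1$; picking $C_{i_2}$ vertex-disjoint from $C_1$ and iterating yields infinitely many pairwise vertex-disjoint cycles, hence $\omdot K_3 < G$. Otherwise some vertex $v$ lies in infinitely many $C_i$; restricting to this sub-family, consider the arcs $A_i := C_i - v$ in $G - v$. \emph{Round 2:} apply the same argument to the $A_i$: either infinitely many are pairwise vertex-disjoint---which, combined with $v$ and its incident edges, produces cycles pairwise meeting only at $v$, hence $\bigvee K_3 < G$---or a second vertex $w$ lies on infinitely many $A_i$, after which we restrict to the cycles passing through both $v$ and $w$.

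The main obstacle is the remaining case, where a na\"ive iteration of the greedy argument need not terminate; we must finish using the star-comb lemma. Split each surviving $C_i$ at $v, w$ into two internally-disjoint $v$-$w$ arcs to obtain an infinite family of pairwise edge-disjoint $v$-$w$ paths $P_j$, let $R_j := P_j - \{v, w\}$, and let $x_j$ denote the $v$-neighbour endpoint of $R_j$; since $G$ is simple and the $P_j$ are edge-disjoint, the $x_j$ are pairwise distinct. If infinitely many $R_j$ lie in distinct components of $G - \{v, w\}$, picking one per component produces infinitely many internally-disjoint $v$-$w$ paths, so $K_{2,\omega} < G$. Otherwise some component $H$ of $G - \{v, w\}$ contains infinitely many $R_j$, and \Lr{SC lem} applied to $H$ with $U := \{x_j : R_j \subseteq H\}$ delivers either a $U$-star at some centre $c$, whose arms prepended with the edges $v x_j$ give infinitely many internally-disjoint $v$-$c$ paths (hence $K_{2,\omega} < G$ via $\{v, c\}$), or a $U$-comb with spine ray $R$ and teeth $Q_k$ ending at $x_{j_k}$. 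In the latter, when infinitely many teeth are non-trivial, the branch sets $\{v\}$, $V(R)$, and $V(Q_k) \setminus V(R)$ realise $K_{2,\omega}$ as a minor; in the degenerate sub-case where infinitely many $x_{j_k}$ lie on $R$ itself, pairing them consecutively along $R$ yields infinitely many cycles through $v$ sharing only $v$, giving $\bigvee K_3 < G$.
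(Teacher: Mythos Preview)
Your proof is correct and follows essentially the same strategy as the paper's: two greedy rounds to extract either $\omdot K_3$ or $\bigvee K_3$, and then the star--comb lemma applied to the neighbours of $v$ in the remaining ``two-pivot'' case. The only notable difference is that you work throughout with a fixed edge-disjoint family of cycles and, in the comb case, you split into trivial versus non-trivial teeth (obtaining $\bigvee K_3$ or $K_{2,\omega}$ respectively), whereas the paper simply contracts the spine and attaches $v$ to claim $K_{2,\omega}$ directly.
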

%%%%%%%%%
\begin{proof}
Suppose $G\not\in \frsE$. Then the set \cc\ of cycles of \g is infinite. If \cc\ contains an infinite subset consisting of pairwise disjoint cycles, then we obtain a $\omdot K_3$ minor and we are done. Otherwise, we claim that there is a vertex $v$ having infinitely many incident edges in $\bigcup \cc$. In particular, $v$ is contained in each cycle of an infinite subset $\cc' \subseteq \cc$. Indeed, if this fails for every $v\in V(G)$, then we can greedily find an infinite sequence \seq{C} of pairwise vertex-disjoint cycles, by removing, at each step $n\in \N$ all the edges of all the cycles intersecting $\bigcup_{j<n} C_j$; since the edges we thereby remove are finitely many, and  $G\not\in \frsE$, we can always find a new cycle $C_n$ avoiding $\bigcup_{j<n} C_j$.

If $v$ is the only vertex incident with infinitely many cycles in $\cc'$, then by a similar greedy construction we obtain a $\bigvee K_3$ minor in $\bigcup \cc'$, with $v$ being the vertex of infinite degree. Otherwise, let $w\neq v$ be a vertex contained in each cycle of an infinite subset $\cc'' \subseteq \cc'$. Note that $G':= \bigcup \cc'' - v$ is a connected graph, because $C - v$ contains $w$ \fe\ $C\in \cc''$. We apply the star-comb \Lr{SC lem} to $G'$ with $U$ being the set of neighbours of $v$, to obtain a $U$-star or a $U$-comb $X$. If $X$ is a $U$-star, then attaching $v$ to $X$ we obtain a subdivision of $K_{2,\omega}$ in \G. If $X$ is a $U$-comb, then contracting its ray, and attaching $v$, we again obtain a $K_{2,\omega}$ minor in \G. 

In all cases we have obtained one of $\omdot K_3, \bigvee K_3, K_{2,\omega}$ as a minor of \G.
\end{proof}

The class $\frsV$ is easier to characterise in terms of excluded minors: we have $\frsV= \forb{\omdot K_3}$. This is a special case of the following helpful fact. Its main argument is well-known in the context of Andreae's ubiquity conjecture \cite{AndUb}.

\begin{proposition} \label{prop PV}
Let $\cp= \forb{H_1, \ldots, H_k}$ be a minor-closed class of countable graphs, where the $H_i$ are finite. Then $\cp_V= \forb{\omdot H_1, \ldots, \omdot H_k}$.
\end{proposition}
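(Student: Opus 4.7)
The plan is to prove each inclusion separately, with the forward inclusion being the easy pigeonhole-type observation and the backward inclusion being the greedy construction that the hint (\emph{Andreae's ubiquity conjecture context}) points to.

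For the forward inclusion $\cp_V \subseteq \forb{\omdot H_1, \ldots, \omdot H_k}$, I would argue contrapositively: suppose $G$ has some $\omdot H_i$ as a minor, witnessed by an infinite family $\{M_n\}_{n\in\N}$ of pairwise disjoint $H_i$-minors. For any finite set $S\subseteq V(G)$, only finitely many of the $M_n$ can meet $S$ (since the $H_i$ are finite, each $M_n$ uses a finite branch-set portion inside $G$, but more simply: $S$ is finite and the $M_n$ are pairwise vertex-disjoint, so at most $|S|$ of them are hit). Hence $G-S$ still contains an $H_i$-minor, so $G-S \notin \cp$ for every finite $S$, i.e.\ $G\notin \cp_V$.

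For the backward inclusion, assume $G \notin \cp_V$. I would build, greedily, a sequence of pairwise vertex-disjoint minors of finitely many different isomorphism types from $\{H_1,\ldots,H_k\}$. Start with $S_0=\emptyset$. Having chosen finite pairwise disjoint vertex sets $S_1,\ldots,S_n$ each supporting an $H_{j_m}$-minor ($1\le m\le n$), set $S:=S_1\cup\cdots\cup S_n$, which is finite. Since $G\notin \cp_V$, the graph $G-S$ is not in $\cp$, hence some $H_{j_{n+1}}$ is a minor of $G-S$, realized on a finite vertex set $S_{n+1}\subseteq V(G)\setminus S$. Iterating produces an infinite sequence $\{S_n\}_{n\ge 1}$ of pairwise disjoint finite vertex sets, each carrying an $H_{j_n}$-minor. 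Since the index $j_n\in\{1,\ldots,k\}$ takes only finitely many values, by the pigeonhole principle some value $i$ occurs for infinitely many $n$; the corresponding minors form a $\omdot H_i$-minor of $G$. This shows $G\notin \forb{\omdot H_1,\ldots,\omdot H_k}$, as required.

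Neither step is really an obstacle: the forward direction is a one-line disjointness argument, and the backward direction is a standard greedy/pigeonhole construction. The only subtlety worth explicitly flagging in the write-up is that the $H_i$ are finite, so each minor $M_n$ lives on a finite vertex set, which is what lets the greedy step go through (we only remove finitely many vertices at each stage, keeping us outside $\cp$) and also lets us verify that the resulting union of disjoint $H_i$-copies is genuinely a $\omdot H_i$-minor.
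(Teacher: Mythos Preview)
Your proof is correct and follows essentially the same approach as the paper: the easy direction is the observation that a finite vertex set meets only finitely many of the pairwise disjoint $H_i$-minors, and the substantive direction is the greedy construction of disjoint finite minors followed by pigeonhole on the index $i$. The only cosmetic difference is that the paper phrases the inductive step as ``$G-M \notin \cp_V$'' rather than your ``$G-S \notin \cp$'', but these amount to the same thing.
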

%%%%%%%%%
\begin{proof}
If $G \not\in \cp_V$, then $G$ has an $H_i$ minor $M=M_0$ for some $i$. We may assume that each branch set of $M$ is finite since $H_i$ is finite. Notice that $G_1:= G - M$ does not belong to $\cp_V$ either. By repeating the argument to $G_1$, and continuing recursively, we obtain a sequence \seq{M} of minors of \G, with \pwd\  branch sets, each being an  $H_i$ minor. By passing to a subsequence we may assume that $i$ is fixed. Thus we have found $\omdot H_i$ as a minor of \G, establishing $\cp_V \supseteq \forb{\omdot H_1, \ldots, \omdot H_k}$.

The converse is straightforward: if $\omdot H_i<G$, then $G- F \not\in \cp$ for any finite $F\subset V(G)$ as $H_i< G-F$ holds.
\end{proof}

As an example application of \Prr{prop PV}, we deduce that $\plV=\forb{\omdot K_5, \omdot \Ktt}$, where we write \defi{$\mathrm{Planar}$} for the class of planar graphs. %A straightforward adaptation of  the first part of the proof of \Prr{prop PV}
Since $\plV \subset \Sig_V$, and  $\omdot K_5, \omdot \Ktt \not\in \Sig_V$ (\cite{BHKY}; see also the proof of \Tr{main thm}) this yields

\begin{corollary} \label{cor sigV}
$\Sig_V=\plV=\forb{\omdot K_5, \omdot \Ktt}$.
\end{corollary}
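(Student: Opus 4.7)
The statement combines three facts, so I will prove the two equalities separately.

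First, I would establish $\plV = \forb{\omdot K_5, \omdot \Ktt}$ as a direct application of Proposition~\ref{prop PV} with $\cp = \pln$, invoking Kuratowski's theorem in the form $\pln = \forb{K_5, \Ktt}$. This is stated as an example right before the corollary, so it is essentially given.

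Next, the containment $\plV \subseteq \Sig_V$ is immediate from $\pln \subseteq \Sig$, since $\BS^2$ is a compact orientable surface: if $G - F$ is planar for some finite $F$, then $G - F \in \Sig$ and so $G \in \Sig_V$. The non-trivial direction is $\Sig_V \subseteq \plV$, which I would prove by contrapositive. Suppose $G \notin \plV$, so (without loss of generality) $\omdot K_5 < G$. I claim $G \notin \Sig_V$. Assume for contradiction that there is a finite $F \subseteq V(G)$ with $G - F \in \Sig$. The $\omdot K_5$ minor of $G$ consists of pairwise disjoint branch sets $B_1, B_2, \ldots$, together with the edges of the corresponding $K_5$'s; since $F$ is finite, only finitely many $B_i$ meet $F$, so discarding those yields an $\omdot K_5$ minor of $G - F$. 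As $\Sig$ is minor-closed, this gives $\omdot K_5 \in \Sig$. The same argument handles $\omdot \Ktt$.

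The heart of the matter, and the only non-routine input, is the fact that $\omdot K_5 \notin \Sig$ (and analogously $\omdot \Ktt \notin \Sig$). This is precisely where the Battle--Harary--Kodama--Youngs additivity theorem for genus over disjoint unions (cited as \cite{BHKY}) enters: since $\gamma(K_5) = 1$, the genus of the disjoint union of $n$ copies of $K_5$ is $n$, hence unbounded. Combined with the classical fact that a countable graph embeds into a fixed surface iff all its finite subgraphs do (\cite[\S 5]{mohar88}, already used in Section~\ref{sec decomp}), this shows that no compact surface accommodates $\omdot K_5$. The analogous computation with $\gamma(\Ktt) = 1$ rules out $\omdot \Ktt$. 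With these facts the contradiction in the previous paragraph closes the argument, and both equalities are established. The main obstacle (additivity of genus) is an external input, and the paper defers it to the proof of Theorem~\ref{main thm}; given that input, the proof is short.
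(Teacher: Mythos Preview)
Your proposal is correct and follows essentially the same route as the paper: apply Proposition~\ref{prop PV} with Kuratowski's theorem to get $\plV = \forb{\omdot K_5, \omdot \Ktt}$, note the trivial inclusion $\plV \subseteq \Sig_V$, and obtain the reverse inclusion from the fact that $\omdot K_5, \omdot \Ktt \notin \Sig$ (via genus additivity \cite{BHKY}). The paper also records an alternative argument for $\Sig_V \subseteq \plV$ using Theorem~\ref{thm decomp}, but your direct approach matches the primary one.
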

An alternative proof of \Cr{cor sigV} can be obtained by using \Tr{thm decomp}: the latter implies that $\Sig \subset \plV$, by removing the intersections of the pieces of any finitary planar decomposition of $G\in \Sig$. This in turn implies $\Sig_V \subset (\plV)_V = \plV$, and so  $\Sig_V=\plV$ as the converse inclusion is trivial.
%%%%%%%%%
%\begin{proof}

\medskip
As another corollary of \Prr{prop PV}, we obtain an easy proof of the analogue of \Tr{main thm} for \lf\ graphs:
\begin{corollary} \label{cor lf}
Let \g be a locally finite graph. Then $G\in \Sig$ unless 
$\omdot K_5 < G$ or $\omdot \Ktt<G$.
\end{corollary}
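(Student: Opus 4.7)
The plan is to deduce this directly from Corollary~\ref{cor sigV} together with Corollary~\ref{cor S plus}. Suppose $G$ is locally finite and neither $\omdot K_5 < G$ nor $\omdot \Ktt < G$. By Corollary~\ref{cor sigV}, we have $G \in \plV$, so there exists a finite vertex set $F \subseteq V(G)$ such that $G_0 := G - F$ is planar, and in particular $G_0 \in \Sig$.

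Next I would exhibit a finitary decomposition of $G$ into two pieces. Let $G_1$ be the subgraph of $G$ consisting of $F$ together with all vertices adjacent to $F$ in $G$, and all edges of $G$ incident with at least one vertex of $F$. Since $F$ is finite and $G$ is \lf, each $v \in F$ has finite degree, hence $G_1$ is a \emph{finite} graph, and therefore embeds into some compact orientable surface of sufficiently large genus; in particular $G_1 \in \Sig$. By construction, $G = G_0 \cup G_1$, and the intersection $G_0 \cap G_1$ consists only of the vertices in $N(F) \sm F$ together with no edges (every edge is either incident to $F$, hence in $G_1$ only, or avoids $F$, hence in $G_0$ only), and this intersection is finite. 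So $(G_0, G_1)$ is a finitary decomposition of $G$ with both pieces in $\Sig$, and Corollary~\ref{cor S plus} gives $G \in \Sig$.

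There is no real obstacle here: the work has already been done in Corollary~\ref{cor sigV} (whose proof uses Proposition~\ref{prop PV} and the fact that $\omdot K_5, \omdot\Ktt \notin \Sig_V$), and the role of \emph{local finiteness} is solely to make the finitely many ``exceptional'' vertices of $F$ incident with only finitely many edges, so that the piece absorbing them is a finite graph and hence trivially lies in $\Sig$. Without local finiteness the same strategy would leave us with an infinite piece $G_1$ which need not itself embed into any compact surface, which is precisely why the full Theorem~\ref{main thm} requires the additional obstructions $\Sig_3,\ldots,\Sig_8$.
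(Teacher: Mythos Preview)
Your proof is correct and follows essentially the same approach as the paper's: both use Corollary~\ref{cor sigV} (or the argument of Proposition~\ref{prop PV}) to obtain a finite $F$ with $G-F$ planar, then invoke local finiteness to ensure $F$ is incident with only finitely many edges, and finish via Corollary~\ref{cor S plus}. You simply spell out the finitary decomposition $(G_0,G_1)$ explicitly, whereas the paper leaves this implicit.
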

%%%%%%%%%
\begin{proof}
As in the proof of \Prr{prop PV}, we can find either an $\omdot K_5$ or an $\omdot \Ktt$ minor in \G, or a finite set of vertices $F\subset V(G)$ \st\ $G-F\in \Sig$. But in the latter case we easily deduce $G\in \Sig$, since $F$ is incident with finitely many edges (we could for example apply \Cr{cor S plus}).
\end{proof}

\begin{remark}
\textup{The aforementioned fact that $\frsV= \forb{\omdot K_3}$ can be thought of as the infinite version of the classical result of \Erd\ \& P\'osa \cite{ErdPosInd} saying that every finite graph has either a $k\cdot K_3$ minor or a set of at most $f(k)$ vertices the removal of which results into a forest. I do not expect there to be an easy way to deduce the one from the other, as this \Erd-P\'osa property fails for non-planar graphs in the finite case \cite{GMV}, while the infinite version holds for every finite graph by \Prr{prop PV}}.
\end{remark}

\subsection{Almost planar graphs} \label{sec AP}

The following is another consequence of \Tr{thm decomp} of independent interest. It is not needed for the proof of our other results, and the reader may skip to the next section. 
\begin{proposition} \label{prop AP}
$\rmce{\Sig} = \rmce{\pln} =\rmece{\pln}$.
\end{proposition}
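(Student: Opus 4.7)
The plan is to establish the four inclusions $\rmce{\pln} \subseteq \rmce{\Sig}$, $\rmce{\Sig} \subseteq \rmce{\pln}$, $\rmce{\pln} \subseteq \rmece{\pln}$, and $\rmece{\pln} \subseteq \rmce{\pln}$. The first and the third are immediate from $\pln \subseteq \Sig$ and $\pln \subseteq \rme{\pln}$. All the content lies in the second inclusion; the fourth will follow from it together with an easy handle-attachment observation.

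For $\rmce{\Sig} \subseteq \rmce{\pln}$, I would fix a finite $F \subseteq E(G)$ with $G/F \in \Sig$ and embed $G/F$ into a compact orientable surface $\Gamma$ of minimum genus. We may assume $G/F$ is connected, since otherwise only finitely many of its components have positive genus and we can treat each separately (a disjoint union of countably many planar graphs is planar). Next choose a finite connected subgraph $H \subseteq G/F$ with $\gamma(H)$ equal to the genus of $\Gamma$; such an $H$ exists because the genus of an infinite graph is attained on a finite subgraph. By Youngs' \Tr{Youngs}, the induced embedding of $H$ into $\Gamma$ is cellular, so each face is an open disc. Let $T$ be a spanning tree of $H$ and set $F' := F \cup T$. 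The goal is then to show that $G/F' = (G/F)/T$ is planar.

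The verification proceeds face by face. For each face $\Phi$ of $H$, the subgraph $K_\Phi$ of $G/F$ consisting of the edges outside $H$ whose image lies in the closed disc $\overline{\Phi}$ is planar, with its $V(H)$-vertices lying on $\partial \Phi$. Contracting $T$ identifies all of $V(H)$ to a single vertex $v^*$; applying the quotient map $\overline{\Phi} \to \overline{\Phi}/\partial \Phi \cong S^2$ to this embedding shows that $K_\Phi$, with its $V(H)$-vertices identified to $v^*$, embeds in the sphere and so is planar. Distinct $K_\Phi$'s share vertices only in $V(H)$ and therefore meet only at $v^*$ in $(G/F)/T$. Hence $(G/F)/T$ is a $1$-sum at $v^*$ of planar graphs (together with finitely many loops at $v^*$ from $E(H) \setminus T$), which is planar. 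The main obstacle will be setting up this geometric collapsing argument correctly, so that the quotient of the disc embedding really does yield an honest planar embedding of each collapsed piece; the rest is bookkeeping.

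Finally, for $\rmece{\pln} \subseteq \rmce{\pln}$: if $(G-D)/C \in \pln$ with $D, C$ finite, then $G/C$ differs from the planar graph $(G-D)/C$ only by the finitely many edges of $D$ (with endpoints identified according to $C$). Adding $k$ edges to an infinite planar graph yields a graph embeddable in an orientable surface of genus at most $k$, since each extra edge can be accommodated by attaching a single handle to a spherical embedding. Thus $G/C \in \Sig$, and the inclusion $\rmce{\Sig} \subseteq \rmce{\pln}$ proved above gives $G \in \rmce{\pln}$.
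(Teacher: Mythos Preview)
Your proof is correct, and the overall strategy---contract a finite tree so that the planar pieces of a decomposition of $G/F$ meet in a single vertex---is the same as the paper's. The execution differs in two ways. For the hard inclusion $\rmce{\Sig} \subseteq \rmce{\pln}$, the paper simply invokes \Tr{thm decomp} to obtain a finitary planar decomposition $G_1,\ldots,G_k$ of each non-planar component, then contracts a finite tree spanning the overlap set $S$; you instead re-derive the relevant fragment of \Tr{thm decomp} directly from Youngs' theorem and collapse a spanning tree of the skeleton $H$. Your route is more self-contained but essentially repeats work already done in \Tr{thm decomp}. One technical caution: $\overline{\Phi}$ need not be a closed disc (a vertex of $H$ may recur on the boundary walk of $\Phi$), but your key identity $\overline{\Phi}/\partial\Phi \cong S^2$ holds regardless, since this quotient is the one-point compactification of the open disc $\Phi$. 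For the remaining inclusions, the paper uses the single chain $\rmce{\pln} \subseteq \rmece{\pln} \subseteq \rmece{\Sig} = \rmce{\Sig}$ (the last equality from $\rme{\Sig}=\Sig$), which absorbs your separate handle-attachment step for $\rmece{\pln} \subseteq \rmce{\pln}$; the ingredient is the same, just organized more economically.
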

%%%%%%%%%
\begin{proof}
The following inclusions follow immediately from the definitions:
$$\rmce{\pln} \subseteq \rmece{\pln} \subseteq \rmece{\Sig} = \rmce{\Sig},$$
where for the last equality we used the fact that $\rme{\Sig} = {\Sig}$. Thus it only remains to show that $\rmce{\Sig} \subseteq \rmce{\pln}$. For this, suppose $G \in \rmce{\Sig}$, and let $F\subset E(G)$ be finite and \st\ $G':= G / F\in \Sig$. Note that at most finitely many of the components of $G'$ are non-planar \cite{MilAdd}. For each such component $C$, apply \Tr{thm decomp} to obtain a finitary decomposition $G_1,\ldots, G_k$ of $C$ into planar pieces. Let $S$ denote the set of vertices of $C$ that lie in at least two of the pieces, and let $T$ be a finite subtree of $C$ containing $S$, which exists since $C$ is connected and $S$ finite. We claim that $C / E(T)$ is planar. Indeed, each $G_i /(E(T) \cap E(G_i))$ is planar since $G_i$ is, and $G_i \cap G_j$ contains at most one vertex after contracting $E(T)$. This proves that $C\in \rmce{\pln}$ for each non-planar component of $G'$, and therefore $G'\in \rmce{\pln}$. It follows that $G$ too lies in $\rmce{\pln}$.
\end{proof}

\section{Outerplanar graphs and related classes} \label{sec UOP}

By \Prr{prop PV} and \Cr{cor sigV}, if a graph $G$ does not lie in $\rmv{\Sig}=\rmv{\rm{Planar}}$ then it has one of the desired minors, and so the most challenging part of our proof of \Tr{main thm} is to handle the case where \g becomes planar after removing a finite vertex set $W$. When $W$ is a single vertex, the latter is tantamount to saying that $G-W$ is planar but not \OP\ \defi{relative to} the neighbourhood of $W$. The aim of this section is to characterise graphs that are \OP\ relative to a vertex set $U$, and the analogous class for embeddings in any compact surface, in terms of forbidden minors that are \defi{marked} by $U$. The precise definitions follow.

%\comment{\begin{definition} \label{def Oi} \end{definition} }

\medskip
Let \g be a graph and $U\subset V(G)$. Define the \defi{$U$-cone} $C_U(G)$ of \g to be the graph obtained from \g by adding a new vertex $u$, the \defi{cone vertex}, and joining it to each vertex in $U$ with an edge. We say that $G$ is \defi{\uop}, if $C_U(G)$ is planar. If \g is finite, it is easy to see that \g is \uop\ \iff\ it admits an embedding into $\BS^2$ \st\ all vertices in $U$ lie on a common face-boundary. Moreover, by letting $U=V(G)$ we recover the standard notion of outerplanarity: \g is outerplanar \iff\ it is $V(G)$-outerplanar.

A \defi{marked graph} is a pair consisting of a graph \g and a subset $U$ of $V(G)$, called the \defi{marked vertices}. Given two marked graphs $(G,U), (H,U')$, an $H$ \defi{marked minor} of $G$ is defined just like an $H$ minor of $G$  (see \Sr{prels}), except that for each marked vertex $v$ of $H$, we require that the corresponding branch set $B_v$ contains at least one marked vertex of $G$. We write $(G,U) <(H,U')$ when this is possible.

Our next lemma adapts the well-known fact that the finite \OP\ graphs coincide with $\forb{K_4,K_{2,3}}$. 

\begin{lemma} \label{lem Oi}
Suppose \g is a countable planar graph, and let $U\subset V(G)$. Then \g is \uop\ \iff\ $(G,U)$ does not contain one of the marked graphs $(\Theta_i, U_i), 1\leq i \leq 4$ of \fig{figOis} as a marked minor. 
\end{lemma}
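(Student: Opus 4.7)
The plan is to reduce $U$-outerplanarity to planarity of the cone $C_U(G)$ and apply Kuratowski's theorem. By definition $G$ is \uop\ iff $C_U(G)$ is planar, so assuming $G$ is not \uop, $C_U(G)$ is non-planar. Since a countable graph is planar iff each of its finite subgraphs is (a standard compactness fact, provable e.g.\ from Kuratowski's theorem for finite graphs via K\"onig's Infinity Lemma), some finite subgraph of $C_U(G)$ contains $K_5$ or $\Ktt$ as a minor, and we may work with this finite Kuratowski minor throughout.

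Because $G$ itself is planar, such a Kuratowski minor must use the cone vertex $u$, i.e.\ $u$ lies in some branch set $B_u$. I would then analyse the configuration by (a) the type of the minor ($K_5$, so $u$ sits in a degree-$4$ branch set; or $\Ktt$, where the degree is $3$), and (b) for each edge leaving $B_u$ in the minor, whether it uses $u$ as an internal vertex or is a genuine edge of $G$ emanating from $B_u\setminus\{u\}$. I will call the former a \emph{cone edge}; since every neighbour of $u$ in $C_U(G)$ lies in $U$, the far endpoint of a cone edge is a vertex of $U$, and the branch set containing that endpoint therefore possesses a marked vertex.

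Next I would suppress $u$ in the minor: each component of $B_u\setminus\{u\}$ is absorbed, along a $G$-edge used by the minor, into whichever neighbouring branch set it reaches, yielding a \umm\ of $G$ with one fewer branch set. This produces a marked $K_4$ in the $K_5$ case and a marked $K_{2,3}$ in the $\Ktt$ case, where the marked vertices are precisely those inherited from cone edges. Running through the possible numbers $k\in\{1,2,3,4\}$ (or $k\in\{1,2,3\}$ in the $\Ktt$ case) of cone edges, together with the positions of the unmarked vertices in the resulting minor, should yield exactly the four configurations $\Theta_1,\ldots,\Theta_4$ of \fig{figOis}.

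The main obstacle is executing this case analysis cleanly. The subgraph $B_u\setminus\{u\}$ need not be connected, and each of its components must be routed to exactly one neighbouring branch set in a way that keeps the final branch sets pairwise disjoint; I will need to argue that such a routing always exists (using that each component of $B_u\setminus\{u\}$ is incident with at least one $G$-edge of the minor, else it would be redundant and could have been deleted from $B_u$). A secondary point is to verify that every arising configuration really collapses to a member of the four-element list $\{\Theta_1,\ldots,\Theta_4\}$, rather than producing a genuinely new marked obstruction; this is the combinatorial core and will rely on the symmetry of $K_4$ and $K_{2,3}$ to merge cases that look superficially different.
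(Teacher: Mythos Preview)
Your overall plan---pass to the cone $C_U(G)$, use compactness to reduce to finite subgraphs, apply Kuratowski, and exploit that the cone vertex $u$ must appear because $G$ is planar---is exactly right and matches the paper. The gap is in how you handle the Kuratowski obstruction once you have it.

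You work with a $K_5$ or $\Ktt$ \emph{minor} and propose to delete $u$ and absorb the pieces of $B_u\setminus\{u\}$ into neighbouring branch sets, always arriving at a marked $K_4$ or a marked $K_{2,3}$. But two of the four targets are not of this shape: $\Theta_2$ is $K_5$ minus an edge (five vertices, the two endpoints of the missing edge marked) and $\Theta_4$ is $\Ktt$ minus an edge (six vertices, two marked). A marked $K_4$ on four vertices cannot contain either of these as a marked minor, and a $K_4$ with only two or three marked vertices contains none of $\Theta_1,\ldots,\Theta_4$. Such under-marked $K_4$'s do occur under your absorption scheme: if $B_u=\{u,w_1,w_2\}$ with $w_1\in U$, cone edges from $u$ to $B_1,B_2$, and $G$-edges $w_1\!-\!B_3$, $w_2\!-\!B_4$, then absorbing $\{w_1,w_2\}$ into $B_3$ or $B_4$ yields a $K_4$ with only three marked branch sets. (In this example a $\Theta_3$ is present if you instead keep $\{w_1,w_2\}$ as a fifth branch set, but your procedure discards it.) So ``suppress $u$ and collapse to $K_4$ or $K_{2,3}$'' genuinely loses information; the case split on the number $k$ of cone edges cannot recover $\Theta_2$ and $\Theta_4$.

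The paper avoids all of this by invoking Kuratowski in its \emph{subdivision} form: $C_U(G)$ contains a subdivision $K$ of $K_5$ or $\Ktt$, and since $G$ is planar $u$ lies on $K$. Now $u$ is either a branch vertex or a subdividing vertex, so $d_K(u)\in\{2,4\}$ for $K_5$ and $d_K(u)\in\{2,3\}$ for $\Ktt$, giving precisely four cases. Deleting $u$ from $K$ then yields (a subdivision of) $K_4$, $K_5-e$, $K_{2,3}$, or $\Ktt-e$ respectively, with the required markings coming from the (one or two) neighbours of $u$ in $K$, which lie in $U$. Switching from minors to subdivisions is the missing idea; once you do that your outline goes through without the routing and case-merging difficulties you anticipated.
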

\begin{figure} 
\begin{center}
\begin{overpic}[width=.7\linewidth]{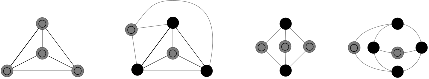} 
\put(9,-3){$\Theta_1$}
\put(39,-3){$\Theta_2$}
\put(63.5,-3){$\Theta_3$}
\put(88,-3){$\Theta_4$}
\end{overpic}
\end{center}
\caption{The excluded \mm s for relative outerplanarity. The sets of marked vertices $U_i$ are shown in grey.} \label{figOis}
\end{figure}

%%%%%%%%%
\begin{proof}
Since a graph is planar \iff\ each of its finite subgraphs is \cite{DiracSchuster}, we may assume for simplicity that \g is finite. 

Let $G':= C_U(G)$, as defined above. If $(\Theta_i,U_i) < (G,U)$ for some $i$, then it is easy to find a $K_5$ or $\Ktt$ minor in $G'$, hence \g  is not \uop. 

%Easily, $G'$ is planar \iff\ $G$ is \uop. 
For the other direction, if $G'$ is non-planar, i.e.\ \g is not \uop, then by Kuratowski's theorem %(which is well-known to extend to countably infinite graphs \cite{}) 
$G'$ contains a subdivision $K$ of $K_5$ or $K_{3,3}$. Since \g is planar, $K$ contains the cone vertex $u$. There are four simple cases depending on which of $K_5, K_{3,3}$ it coincides with, and on whether $u$ is a branch vertex or a subdivision vertex:
\begin{figure} 
\begin{center}
\begin{overpic}[width=.4\linewidth]{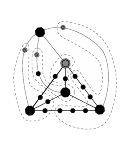} 
\put(20,83){$u$}
\end{overpic}
\end{center}
\caption{Turning $K$ into a $(\Theta_1,U_1)$ marked-minor in Case~1. The dashed enclosures represent the branch sets $B_i$, and the grey vertices represent the $v_i$.} \label{figO1}
\end{figure}
\begin{itemize}
	\item[Case 1:] \label{O i}  $K$ is a subdivision of $K_5$, and $d_K(u)=4$. In this case we obtain $(\Theta_1,U_1)$ as a \umm\ of $G$ as follows. We remove $u$ from $K$, and for each of the four neighbours $v_i\in U, 1\leq i \leq 4$ of $u$, we define a branch set $B_i$ containing the (possibly trivial) path of $K$ joining $v_i$ to a vertex $x_i\neq u$ with $d_K(x_i)=4$. We extend the $B_i$ so that each vertex of $K - u$ lies in exactly one $B_i$ (\fig{figO1}). 
	\item[Case 2:] \label{O ii} $K$ is a subdivision of $K_5$, and $d_K(u)=2$. Similarly to the previous case, we obtain $(\Theta_2,U_2)$ as a \umm\ of $G$. Indeed, $\Theta_2$ is isomorphic to $K_5$ with the edge joining the two marked vertices removed. 
	\item[Case 3:] \label{O iii} $K$ is a subdivision of $\Ktt$, and $d_K(u)=3$. Similarly to Case~1, we obtain $(\Theta_3,U_3)$ as a \umm\ of $G$. 
	\item[Case 4:] \label{O iv} $K$ is a subdivision of $\Ktt$, and $d_K(u)=2$. Similarly to Case~2, we obtain $(\Theta_4,U_4)$ as a \umm\ of $G$, whereby we use the fact that $\Theta_4$ is isomorphic to $\Ktt$ with the edge joining the two marked vertices removed.
\end{itemize}
Thus in each case we have found a $(\Theta_i,U_i)$ as a marked minor in \G.
\end{proof}

\begin{remark}
It follows from \Lr{lem Oi} that \g is \uop\ as soon as each of its finite subgraphs is.
\end{remark}

%For a graph \g and $U \subseteq V(G)$, we say that $G$ is \defi{\uopE}, if there are finite sets $F\subset E(G), W\subset U$ \st\ $G - F$ is $(U\sm W)$-\OP. 

We now introduce a generalisation of \OP ity to arbitrary surfaces that will play an important role in the proof of \Tr{main thm}: 
\begin{definition} \label{def SU}
We say that a marked graph $(G,U)$ lies in \defi{\SU}, and write $(G,U)\in \SU$, if $C_U(G)\in \Sig$. %\mymargin{move}
\end{definition}

\begin{lemma} \label{lem SU}
Suppose $\g\in \Sig$ is a countable graph, and $(G,U)$ is not in \SU\ for some $U\subset V(G)$. Then $(G,U)$ contains one of the following marked graphs as a marked minor: \\
the graphs
$\Phi_i, 1\leq i \leq 5$, $\Phi'_i, 2\leq i \leq 4$ of \fig{figUis}, or \\ the graphs $\omega \cdot \Theta_i, 1\leq i \leq 4$, with $\Theta_i$ as in \fig{figOis}.
\end{lemma}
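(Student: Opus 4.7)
My plan is to reduce to the planar case via Theorem~\ref{thm decomp} and then carry out a Kuratowski-style case analysis. First, apply Theorem~\ref{thm decomp} to decompose $G$ as a finitary union of planar subgraphs $G_1, \dots, G_k$; setting $U_i := U \cap V(G_i)$, the family $\{C_{U_i}(G_i)\}_{i=1}^k$ forms a finitary decomposition of $C_U(G)$, since the cone vertex $u$ is common to all pieces and each $G_i \cap G_j$ is finite. Corollary~\ref{cor S plus} then forces some $C_{U_i}(G_i) \notin \Sig$, for otherwise $C_U(G)$ would be in $\Sig$. Renaming $H := G_i$ and $W := U_i$, the task reduces to: if $H$ is a countable planar graph with $C_W(H) \notin \Sig$, then $H$ contains one of the listed graphs as a $W$-marked minor.

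I would now attempt an iterative extraction of $\Theta_j$-marked minors via Lemma~\ref{lem Oi}. Starting from $(H, W)$, each application of Lemma~\ref{lem Oi} either certifies $W$-outerplanarity or produces a $\Theta_j$-marked minor with finite branch sets; in the latter case remove those branch sets (call the deleted vertex set $F_1$) and iterate on $(H - F_1, W \setminus F_1)$. If this process continues indefinitely, we build an infinite pairwise-disjoint family of $\Theta_j$-marked minors of $H$; pigeonholing the index $j \in \{1,2,3,4\}$ yields an $\omdot \Theta_j$-marked minor of $H$, completing the proof in this branch.

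Otherwise the iteration terminates after finitely many steps with a finite vertex set $F := F_1 \cup \dots \cup F_m$ such that the residual $H^* := H - F$ has no $\Theta_j$-marked minor. By Lemma~\ref{lem Oi}, $H^*$ is $W^*$-outerplanar (where $W^* := W \setminus F$), so $C_{W^*}(H^*)$ is planar. The key point is that reinstating $F$ is not free: it re-introduces all edges of $H$ incident to $F$, which may include infinitely many if some vertex of $F$ has infinite degree in $H$. This is precisely what forces $C_W(H) \notin \Sig$, as the added vertices and edges, together with $u$, create a bouquet-type or $K_{3,\omega}$-type obstruction that a finitary planar decomposition cannot accommodate. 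A careful case analysis — combining the star-comb lemma (Lemma~\ref{SC lem}) applied to $W^*$-vertices adjacent to $F$ in $H^*$ with a Kuratowski-style enumeration of how a $K_5$- or $K_{3,3}$-subdivision in $C_W(H)$ can use the vertices of $F \cup \{u\}$ as branch vertices — should then yield one of the graphs $U_i$ or $U'_i$ as a $W$-marked minor of $H$. The main obstacle is precisely this final case analysis: showing exhaustively that every residual obstruction corresponds to one of the eight specified shapes will require an extensive topological/combinatorial case check, akin to but substantially more intricate than the proof of Lemma~\ref{lem Oi}, as it must accommodate both the extra vertices of $F$ and the arbitrary-genus flexibility afforded by compact surfaces.
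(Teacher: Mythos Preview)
Your reduction to a planar piece $H$ with $C_W(H)\notin\Sig$ is correct and essentially the content of \Lr{lem STh} (via \Lr{cor SU}). Your iterative extraction of disjoint $\Theta_j$ minors is also the right opening move, and matches the paper's claim \eqref{crit}. The difference begins at the termination step, and this is where your proposal has a genuine gap rather than merely a tedious case-check.

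You remove entire $\Theta_j$ branch sets at each step, so when the iteration terminates you are left with a finite set $F$ of possibly many vertices such that $H-F$ is $W^*$-outerplanar. You then propose a Kuratowski-style enumeration over how a $K_5$/$K_{3,3}$ subdivision in $C_W(H)$ can use $F\cup\{u\}$. But $|F|$ is unbounded, so this enumeration has no finite shape; there is no reason the obstruction should involve only one or two vertices of $F$, and no mechanism in your outline singles any of them out. In particular, you give no indication of how the double-star $U_5$ would ever appear: it arises precisely when \emph{two} specific vertices each send infinitely many independent paths into $U$, and nothing in a coarse ``remove $F$, analyse what breaks'' argument isolates such a pair.

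The paper avoids this by removing vertices \emph{one at a time} rather than by whole $\Theta$-minors, stopping the moment a single removal pushes the graph into $\SU$. This yields a \emph{$\SU$-critical} vertex $x$: a vertex whose deletion restores membership in $\SU$. \Lr{lem SU star} then shows that any $\SU$-critical vertex is the centre of an infinite $U$-star, and \Lr{lem SU dstar} shows that two $\SU$-critical vertices yield the marked double-star $U_5$. With one critical vertex $x$ in hand, the paper repeats the one-vertex-at-a-time iteration while \emph{protecting} $x$, either finding a second critical vertex (hence $U_5$) or producing infinitely many $\Theta_i$ minors pairwise intersecting only at $x$; pigeonholing on $i$ and on whether $x$'s branch set corresponds to a marked or unmarked vertex of $\Theta_i$ then gives one of the bouquet graphs $U_i$ or $U'_i$. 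This criticality concept is the organising idea your final step is missing; without it, the ``extensive case check'' you anticipate does not have a finite structure to check.
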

\begin{figure} 
\begin{center}
\begin{overpic}[width=1\linewidth]{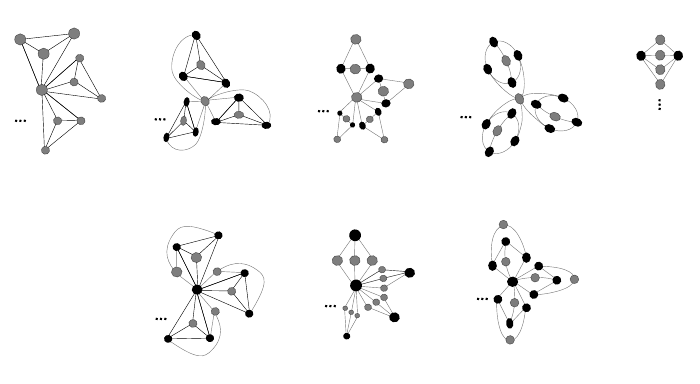} 
\put(5,29){$\Phi_1$}
\put(26,29){$\Phi_2$}
\put(49,29){$\Phi_3$}
\put(70,29){$\Phi_4$}
\put(93,29){$\Phi_5$}
\put(26,0){$\Phi'_2$}
\put(49,0){$\Phi'_3$}
\put(70,0){$\Phi'_4$}
\end{overpic}
\end{center}
\caption{Some of the excluded \mm s of \SU. The grey vertices represent the marked ones.} \label{figUis}
\end{figure}

\Lr{lem SU} is the technically most challenging part of the proof of \Tr{main thm}. We prepare its proof with a number of lemmas. The first one is similar to \Cr{cor S plus}.

\begin{lemma} \label{cor SU}
Let \g be a countable graph admitting a finitary decomposition $G_1,\ldots, G_k$. If each $(G_i,U\cap V(G_i))$ lies in \SU\ %$(U\cap V(G_i))$-\OP\ 
for some $U\subseteq V(G)$, then $G\in \SU$.
\end{lemma}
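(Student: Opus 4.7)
The plan is to mirror the proof of \Cr{cor S plus}, reducing the claim to repeated applications of \Lr{lem S}. Writing $U_i := U \cap V(G_i)$ for each $i$, the hypothesis reads $C_{U_i}(G_i) \in \Sig$ for every $i \in \{1, \ldots, k\}$. I would let $H$ be the (abstract) disjoint union of the $U$-cones $C_{U_i}(G_i)$; this $H$ admits the obvious finitary decomposition into the pieces $C_{U_i}(G_i)$ (vacuously finitary, since distinct pieces are vertex-disjoint), each in $\Sig$, so $H \in \Sig$ by \Cr{cor S plus}.

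Next, using \Lr{lem S} I would perform two kinds of identifications in $H$, each step preserving membership in $\Sig$. First, for every vertex $v \in V(G)$ that lies in more than one piece $G_i$, identify the corresponding copies of $v$ appearing in different $C_{U_i}(G_i)$ into a single vertex; since the decomposition $G_1, \ldots, G_k$ of $G$ is finitary, only finitely many such identifications are required. Second, identify all $k$ cone vertices of $H$ into one vertex $u$, using $k-1$ further applications of \Lr{lem S}. After these finitely many steps we are still in $\Sig$, and the resulting graph has vertex set $V(G) \cup \{u\}$ with $u$ adjacent to precisely those $v \in V(G)$ satisfying $v \in U_i$ for some $i$, that is, to all of $U = \bigcup_i U_i$ (using $U \subseteq V(G) = \bigcup_i V(G_i)$). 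In other words, the resulting graph is exactly $C_U(G)$. Hence $C_U(G) \in \Sig$, which by \Dr{def SU} means $G \in \SU$, as desired.

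The argument is a direct adaptation of \Cr{cor S plus}, so no genuine obstacle is expected; the only point that warrants a moment's care is verifying that amalgamating the individual cone vertices produces exactly the cone over the amalgamated graph, with no spurious edges and with the right neighbourhood for $u$. Both facts are immediate from the definition of $C_U(\cdot)$ and the identity $U = \bigcup_i U_i$.
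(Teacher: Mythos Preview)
Your proposal is correct and follows essentially the same approach as the paper: form the disjoint union of the cones $C_{U_i}(G_i)$, identify shared vertices of $G$ across pieces, then identify the cone vertices, invoking \Lr{lem S} and \Cr{cor S plus} throughout. The only cosmetic difference is that the paper applies \Cr{cor S plus} directly to the graph $G'$ in which the shared $G$-vertices have already been identified (viewing the $C_U(G_i)$ as overlapping pieces of a finitary decomposition of $G'$), whereas you apply it to the disjoint union first and then perform the identifications via \Lr{lem S}; both orderings are valid and amount to the same argument.
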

%%%%%%%%%
\begin{proof}
Our assumption says that $C_{U}(G_i)$ lies in $\Sig$ \fe\ $i$. (To simplify notation we write $C_{U}(G_i)$ instead of $C_{U \cap V(G_i)}(G_i)$ throughout this proof.) Let $G'$ be the graph obtained from the disjoint union of the $C_{U}(G_i), i\leq k$, by identifying each set of vertices corresponding to the same vertex of \g into one vertex. %Since our decomposition is finitary, only finitely many pairs of vertices are thereby identified. By repeatedly applying \Lr{lem S} we thus deduce that that $G' \in \Sig$. 
By applying \Cr{cor S plus} to $G'$, decomposed into the $C_{U}(G_i), i\leq k$,  we deduce that that $G' \in \Sig$. 

Note that $C_U(G)$ can be obtained from $G'$ by identifying the cone vertices of each $C_{U}(G_i)$ into one cone vertex. Thus by applying \Lr{lem S} $k-1$ times, we deduce that $C_U(G)\in \Sig$, i.e.\ $G\in \SU$.
\end{proof}

Using this, we can now extend \Lr{lem Oi} from planar graphs to graphs in \Sig:

\begin{lemma} \label{lem STh}
Let \g be a  graph in \Sig, and let $U\subset V(G)$. Then $G$ lies in  $\SU$ \iff\ it does not contain one of the marked graphs $(\Theta_i,U_i), 1\leq i \leq 4$ of \fig{figOis} as a marked minor. 
\end{lemma}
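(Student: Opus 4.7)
The plan is to reduce to the planar case (\Lr{lem Oi}) via the decomposition provided by \Tr{thm decomp}, and then reassemble via \Lr{cor SU}.

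Suppose $G\in \Sig$ contains no $\Theta_i$ as a $U$-marked minor. By \Tr{thm decomp}, $G$ admits a finitary decomposition $G_1,\ldots,G_k$ into planar pieces. For each $i$, set $U_i := U\cap V(G_i)$. Any $\Theta_j$ arising as a $U_i$-marked minor of $(G_i,U_i)$ would, via the inclusions $G_i\subseteq G$ and $U_i\subseteq U$, also be a $U$-marked minor of $(G,U)$: its branch sets are subgraphs of $G$, and the marked branch sets still contain vertices of $U$. This contradicts our hypothesis, so $G_i$ is a planar graph with no $\Theta_j$ as a $U_i$-marked minor. Applying \Lr{lem Oi} to $G_i$, I conclude that $G_i$ is $U_i$-outerplanar; equivalently, $C_{U_i}(G_i)$ is planar and in particular lies in $\Sig$.

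To finish, I apply \Lr{cor SU} (with $U_i$ in the role of $U$ for piece $G_i$) to conclude that $G\in \SU$. Concretely, $C_U(G) = \bigcup_{i=1}^k C_{U_i}(G_i)$ is a finitary decomposition with pairwise intersections $(V(G_i)\cap V(G_j))\cup \{u\}$, each finite; since each $C_{U_i}(G_i)\in \Sig$, \Cr{cor S plus} yields $C_U(G)\in \Sig$, i.e.\ $G\in \SU$, as desired.

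The only point requiring attention is that the marked sets $U_i$ vary from piece to piece, whereas \Lr{cor SU} is stated with a single $U$; however its proof proceeds via exactly the decomposition of $C_U(G)$ just described, and extends without change to this setting. Thus the argument is essentially a direct reduction to the already-proved planar case (\Lr{lem Oi}), with no substantial new obstacle beyond this bookkeeping.
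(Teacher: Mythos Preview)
Your proof is correct and is essentially the contrapositive of the paper's own argument: the paper assumes $G\notin\SU$, observes via \Lr{cor SU} that some planar piece $G_i$ must fail to be $(U\cap V(G_i))$-outerplanar, and then invokes \Lr{lem Oi} to extract a $\Theta_j$ \umm. Your worry about the ``varying $U_i$'' is unnecessary, since in \Lr{cor SU} the condition ``$G_i\in\SU$'' for a subgraph $G_i$ already means $C_{U\cap V(G_i)}(G_i)\in\Sig$; so the single $U$ in that lemma is precisely what you need.
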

%%%%%%%%%%
\begin{proof}
Let $G_1,\ldots, G_k$ be a finitary decomposition of \G\ into planar pieces, as provided by \Tr{thm decomp}. The backward directions is straightforward, so assume $G \not\in \SU$. If each $G_i$ is $(U\cap V(G_i))$-\OP, then \Lr{cor SU} implies that $G \in \SU$ contrary to our assumption. Thus some $G_i$ is ---planar and--- not $(U\cap V(G_i))$-\OP, and plugging it into \Lr{lem Oi} yields the desired minor.
\end{proof}

Given a marked graph $(G,U)$ with $G\in \Sig$, and a vertex $x\in V(G)$, we say that $x$ is \defi{\SU-critical}, if $(G,U)$ is not in $\SU$ but $(G-x,U-x)$ is. The most difficult part of the proof of \Lr{lem SU} lies in finding a $\Phi_5$ minor in the case where \g contains at least two \SU-critical vertices. This is achieved (and refined) by the following two lemmas. Recall the definition of a $U$-star from \Sr{sec scl}.

\begin{lemma} \label{lem SU star}
Suppose $G \in \Sig$, and $x\in V(G)$ is \SU-critical for some $U\subseteq V(G)$. Then \g contains a $U$-star with $x$ as the infinite-degree vertex.
\end{lemma}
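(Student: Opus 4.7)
\medskip\noindent\textbf{Proof plan for \Lr{lem SU star}.}

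The plan is to argue by contradiction. Suppose $G$ contains no $U$-star centred at $x$. Note that a $U$-star centred at $x$ is essentially the same object as an infinite family of internally disjoint $x$--$U^*$ paths in $G$, where $U^* := U\setminus\{x\}$: the forward direction is obvious, and conversely, given such a family $(P_i)_{i\in \N}$ of internally disjoint paths, we can let $y_i$ be the first vertex of $P_i$ lying in $U^*$, and then the initial segments $Q_i$ of the $P_i$ from $x$ to $y_i$ are internally disjoint and have pairwise distinct endpoints in $U^*$ (since any coincidence $y_i=y_j$ would violate internal disjointness), yielding the desired subdivision of $K_{1,\omega}$. So assuming no $U$-star exists, Menger's theorem for countable graphs produces a finite set $S\subseteq V(G)$ with $x\notin S$ that separates $x$ from $U^*$ in $G$. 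Let $C$ be the component of $G-S$ containing $x$; then $C\cap U\subseteq\{x\}$.

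Next I would split $G$ into the two subgraphs $G_1:=G[C\cup S]$ and $G_2:=G[(V(G)\setminus C)\cup S]$. Their intersection is $G[S]$, which is finite, so this is a finitary decomposition in the sense of \Dr{def fin dec}. The strategy is now to show that both pieces lie in $\SU$ (with respect to their induced marked sets $U\cap V(G_i)$) and then invoke \Lr{cor SU} to conclude $G\in\SU$, contradicting the $\SU$-criticality of $x$.

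For $G_1$, observe that $G_1\subseteq G\in\Sig$, so $G_1\in\Sig$, and $U\cap V(G_1)\subseteq\{x\}\cup S$ is finite. Hence $C_{U\cap V(G_1)}(G_1)$ is obtained from $G_1$ by adding one vertex joined to finitely many vertices; viewing this added `star' as one finite piece and $G_1$ as the other, \Cr{cor S plus} gives $C_{U\cap V(G_1)}(G_1)\in\Sig$, i.e.\ $G_1\in\SU$. For $G_2$, since $x\notin V(G_2)$, we have $G_2\subseteq G-x$ and $U\cap V(G_2)\subseteq U\setminus\{x\}$; therefore $C_{U\cap V(G_2)}(G_2)$ is a subgraph of $C_{U\setminus\{x\}}(G-x)=C_U(G-x)$, which lies in $\Sig$ because $x$ is $\SU$-critical. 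Hence $G_2\in\SU$.

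The main technical concern I foresee is the first step: justifying the existence of the finite separator $S$. For a countable graph, a vertex $y$ and a vertex set $W$, the existence of infinitely many internally disjoint $y$--$W$ paths is equivalent to the non-existence of a finite $y$--$W$ separator; this is a standard extension of Menger's theorem (one can reduce to the two-vertex version by adding an auxiliary vertex joined to $W$, which in our setting is literally the cone vertex of $C_U(G)$). A small amount of care is also needed to handle the case $x\in U$: there we must work with $U^*=U\setminus\{x\}$ so as to avoid counting the trivial length-zero path, but the argument is otherwise identical, since the crucial bound $C\cap U\subseteq\{x\}$ still suffices to make $U\cap V(G_1)$ finite and $U\cap V(G_2)$ disjoint from $x$.
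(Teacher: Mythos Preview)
Your proposal is correct and follows essentially the same approach as the paper's own proof: both assume no $U$-star exists, extract a finite separator (you separate $x$ from $U\setminus\{x\}$, the paper separates $N(x)$ from $U$ via a maximal family of disjoint paths), split $G$ into two pieces along it, check that the piece containing $x$ meets $U$ only finitely often while the other piece avoids $x$, and then invoke \Lr{cor SU} to reach the contradiction $G\in\SU$. Your explicit handling of the case $x\in U$ via $U^*=U\setminus\{x\}$ is a nice touch; just make sure that by ``internally disjoint $x$--$U^*$ paths'' you mean paths meeting only at $x$ (an $x$--$U^*$ fan), since otherwise your claim that $y_i=y_j$ violates internal disjointness would fail when both $y_i,y_j$ are endpoints.
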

%%%%%%%%%%
\begin{proof}
Finding the desired $U$-star is tantamount to finding an infinite set of pairwise disjoint \pths{N(x)}{U}, where $N(x)$ stands for the set of vertices sending an edge to $x$. This is possible unless there is a finite set $S$ of vertices separating $N(x)$ from $U$ in $G-x$. Indeed, if there is a finite and maximal set of pairwise disjoint \pths{N(x)}{U}, then their union forms such an $S$. (We have just used a trivial infinite version of Menger's theorem.) Assuming such an $S$ exists, let $D_x$ denote the component of $G -S$ containing $x$, and let $\dbar{D_x}:= G[D_x \cup S]$ denote the subgraph of \g induced by $D_x \cup S$. Note that $N(x)\subseteq V(\dbar{D_x})$ by the definitions. Let $D_U$ denote the union of all other components of $G -S$, and let $\dbar{D_U}:=G[D_U \cup S]$. Note that $U\subseteq V(\dbar{D_U})$. 

Thus $\dbar{D_x},\dbar{D_U}$ is a finitary decomposition of \G, as $V(\dbar{D_x} \cap \dbar{D_U}) \subset S$. We will use \Lr{cor SU} to deduce that $G\in \SU$, contradicting our assumptions. Indeed, both $\dbar{D_x}, \dbar{D_U}$ lie in \Sig\ being subgraphs of \G. Since $\dbar{D_x}$ contains at most finitely many elements of $U$ (those in $S\cap U$), it lies in \SU. Since $\dbar{D_U}$ is a subgraph of \g that avoids the \SU-critical vertex $x$, we have $\dbar{D_U} \in \SU$ too. Thus $G\in \SU$ by \Lr{cor SU}. This contradicts the existence of $S$, thus proving the existence of the desired $U$-star.

%Thus $G=  \dbar{D_x} +_S \dbar{D_U}$. We will use \Lr{lem S plus} to deduce that $G\in \SU$, contradicting our assumptions. Indeed, both $\dbar{D_x}, \dbar{D_U}$ lie in \Sig\ being subgraphs of \G. Since $\dbar{D_x}$ contains at most finitely elements of $U$ (those in $S\cap U$), its cone $C_U(\dbar{D_x})$ lies in \Sig. Since $\dbar{D_U}$ is a subgraph of \g that avoids the \SU-critical vertex $x$, we have $\dbar{D_U} \in \SU$, i.e.\ $C_U(\dbar{D_U})\in \Sig$. Thus $G\in \SU$ by  \Lr{lem S plus}. This contradicts the existence of $S$, thus proving the existence of the desired star.

\end{proof}

We use \Lr{lem SU star} in order to prove 

\begin{lemma} \label{lem SU dstar}
Suppose $G \in \Sig$, and \g has two \SU-critical vertices $x,y$ for some $U\subseteq V(G)$. Then \g contains  the marked double-star $\Phi_5$ as a \mm. %topological minor with $x,y$ as the infinite-degree vertices.
\end{lemma}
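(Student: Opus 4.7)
The plan is to apply Lemma~\ref{lem SU star} to each of $x$ and $y$ to obtain $U$-stars centered at these vertices, and then to combine them into a $U_5$ marked minor. First, since $x$ is $\Sigma_U$-critical, Lemma~\ref{lem SU star} furnishes a $U$-star $T_x$ centered at $x$; as its spokes are pairwise internally disjoint, at most one of them can pass through $y$, so discarding that one yields a star $T_x$ with $y \notin V(T_x)$. Symmetrically, we obtain a $U$-star $T_y$ centered at $y$ with $x \notin V(T_y)$.

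If $V(T_x) \cap V(T_y)$ is finite, the argument concludes quickly: discarding the finitely many spokes of $T_x$ that meet $T_y$, and vice versa, leaves infinite vertex-disjoint $U$-stars $T_x', T_y'$ at $x$ and $y$. A finite path from $x$ to $y$ in $G$ (noting that $x$ and $y$ must lie in the same component of $G$ in the nontrivial case, since otherwise a decomposition via \Lr{cor SU} would contradict the criticality of both vertices), routed to meet $T_x', T_y'$ only at their centers by discarding a further finite set of spokes intersecting this path, supplies the adjacency between the two centers needed to realise $U_5$ as a marked minor.

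The hard part will be the case $|V(T_x) \cap V(T_y)| = \infty$. Enumerate the spokes of $T_x$ and $T_y$ as $P^x_1, P^x_2, \ldots$ and $P^y_1, P^y_2, \ldots$ respectively. If each $P^y_j$ meets only finitely many $P^x_i$'s (and symmetrically), a back-and-forth greedy construction---choosing spokes of $T_x, T_y$ alternately while at each step discarding those in conflict with the currently chosen ones---produces infinite disjoint sub-stars, and we finish as in the previous paragraph. If instead some $P^y_j$ is met by infinitely many $P^x_i$'s, the plan is to absorb the initial segment of $P^y_j$ (from $y$ up to a suitably chosen meeting point) into the branch set $B_y$ of the minor, so that all subsequent meeting points become direct neighbours of this enlarged branch set; then $P^y_j$ can be removed from $T_y$ and the argument re-run on the modified pair of stars. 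Since the marked-minor definition only requires $B_y$ to be connected and to contain $y$, such enlargement is legitimate.

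The principal obstacle is controlling the iteration in the last case: one has to ensure that repeated absorption does not exhaust the infinite supply of spokes at either side, and that the meeting structure does not force us to absorb vertices of $T_x$ that should belong to $B_x$. I expect this to follow from a careful K\"onig's-lemma-type argument on the bipartite intersection pattern between the spokes of $T_x$ and $T_y$, after which the case reduces to the ``each spoke meets only finitely many'' regime and the greedy extraction concludes the proof.
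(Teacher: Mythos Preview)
Your argument rests on a misidentification of $U_5$. The marked double-star $U_5$ is not a pair of infinite stars whose centres are joined by an edge; it is $K_{2,\omega}$, with the degree-$2$ vertices marked (this is why $C_U(U_5)=K_{3,\omega}=\Sigma_8$ in the proof of \Tr{main thm}, and why the paper defines a \emph{double-star} in \Sr{sec SC} as a subdivision of $K_{2,\omega}$). Consequently your ``easy'' case---where $V(T_x)\cap V(T_y)$ is finite and you extract vertex-disjoint $U$-stars $T'_x,T'_y$ together with an $x$--$y$ path---does not yield a $U_5$ marked minor at all: each leaf of $T'_x$ is connected only to the $x$-branch-set and not to the $y$-branch-set, so no $K_{2,\omega}$ appears. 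Ironically, it is precisely when the two stars interact heavily that the common neighbours needed for $K_{2,\omega}$ are present; your case split is inverted, and the case you call ``quick'' is in fact the one requiring real work.

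The paper's proof avoids this by never trying to make the stars disjoint. It chooses $T_x$ \emph{maximal}, and then argues via (the trivial version of) Menger's theorem that either there are infinitely many pairwise disjoint \pths{T_x}{T_y}---from which the $K_{2,\omega}$ marked minor is built greedily---or a finite set $S$ separates $T_x$ from $T_y$. In the latter case one side of the resulting finitary decomposition is still not in $\Sigma_U$ by \Lr{cor SU}, which forces $x\in S$; applying \Lr{lem SU star} again inside that side produces a $U$-star at $x$ almost entirely disjoint from $T_x$, contradicting maximality. To repair your approach you would need, in the finite-intersection case, to produce infinitely many pairwise disjoint \pths{T_x}{T_y}\ in $G$; this is exactly where the maximality/separator idea does the essential work, and nothing in your outline supplies it.
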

%%%%%%%%%%
\begin{proof}
By \Lr{lem SU star} \g contains a $U$-star $T_x$ centred at $x$. By Zorn's lemma we can choose $T_x$ to be maximal, i.e.\ \st\ there is no \pth{x}{U}\  avoiding $T_x-x$. By repeating with $x$ replaced by $y$, we obtain a maximal,  infinite,  $U$-star $T_y$ centred at $y$. 

If \g contains an infinite set \cp\ of pairwise disjoint \pths{T_x}{T_y}, e.g.\ when $T_x \cap T_y$ is infinite, then it is easy to construct the double-star as a \mm\ of $T_x \cup \bigcup \cp \cup T_y$ greedily.

Thus from now on we may assume that no such \cp\ exists, and therefore there is a finite set of vertices $S$ separating $T_x$ from $T_y$ in \g by (the trivial version of) Menger's theorem. Similarly to the proof of \Lr{lem SU star}, we let $C_x$ denote the union of the components of $G -S$ intersecting $T_x$, and let $\dbar{C_x}:= G[C_x \cup S]$. We let $\dbar{C_y}$ be the subgraph of \g induced by $S$ and all other components of $G -S$, and note that $T_y\subseteq \dbar{C_y}$. Since $G$ decomposes into  $ \dbar{C_x}, \dbar{C_y}$, and $G\not\in \SU$, \Lr{cor SU} implies that at least one of $\dbar{C_x} ,\dbar{C_y}$, say $\dbar{C_y}$, is not in \SU. Note that $x$ must lie in $\dbar{C_y}$ (hence in $S$) because $G-x\in \SU$ by assumption. Thus $x$ must be \SU-critical in $\dbar{C_y}$ since it is \SU-critical in \G. Applying \Lr{lem SU star} to $x$ in $\dbar{C_y}$, we thus obtain a $U$-star $T$ in $\dbar{C_y}$ with $x$ as the centre. Only finitely many vertices of $T$ lie in $S$, and so almost all of $T$ is disjoint from $T_x$, contradicting the maximality of the latter. This contradiction proves that \cp\ exists, hence so does the desired double-star.
\end{proof}

We can now prove the main result of this section:

%%%%%%%%%
\begin{proof}[Proof of \Lr{lem SU}]
We claim that 
\labtequ{crit}{\g has
either  an $\omdot \Theta_i$ \mm\ for some $1\leq i \leq 4$, or a subgraph $G'\in \Sig$ with $(G' ,U \cap V(G')) \not\in \SU$ containing a  \SU-critical vertex.}
Indeed, by \Lr{lem STh} we can find a marked $\Theta_i$ minor $\Theta$ in $G$ for some $1\leq i \leq 4$. Easily, we can choose the branch sets of $\Theta$ to be finite. We can now recursively remove the vertices in the branch sets of $\Theta$ one by one, and we will either encounter a \SU-critical vertex of $G-F$ for some finite $F\subset V(G)$, or deduce that $G-\Theta$ is still not in \SU, in which case we can apply \Lr{lem STh} again to find another $\Theta_i$ \mm\ disjoint from $\Theta$. Continuing like this ad infinitum, we achieve one of our two aims of \eqref{crit}. 

If we thereby obtain an $\omdot \Theta_i$ \mm\ our task is complete, so we can assume from now on that some subgraph $G' \subseteq G$ in $\Sig \sm \SU$ has a  \SU-critical vertex  $x$. We may assume \obda\ that $G'=G$, since both are graphs in $\Sig \sm \SU$. By modifying \eqref{crit} slightly, we will next prove 
\labtequ{crit2}{\g has
either  a $\Phi_i$ or $\Phi'_i$ \mm\ for some $1\leq i \leq 4$, or a subgraph $G'\in \Sig \sm \SU$ containing two  \SU-critical vertices.}
The proof is similar to that of \eqref{crit}: we recursively apply 
 \Lr{lem STh} to find a marked $\Theta_i$ minor $\Theta$ in $G$, and remove the vertices inside $\Theta$ other than $x$ one by one, until we encounter a second \SU-critical vertex $y\neq x$ of $G-F$ for some finite $F\subset V(G)$. If this happens, we note that $x$ remains \SU-critical in $G-F$ by the definitions, and so the second option of \eqref{crit2} is satisfied. If we never encounter such a vertex $y$, then we continue ad infinitum to find an infinite set \cc\ of $\Theta_i$ \mm s of \g intersecting at $x$ only. 
 
 We can easily turn \cc\ into a $\Phi_i$ or $\Phi'_i$ minor as follows. By the pigeonhole principle, we can find an infinite subset $\cc' \subseteq \cc$ all elements of which coincide with $\Theta_i$ for a fixed $i$, and moreover their branch set containing $x$ always corresponds to a  marked vertex of $\Theta_i$ or always corresponds to an unmarked one. We form a minor $M$ of $\bigcup \cc' \subseteq G$ as follows. For each $C\in \cc'$ we let $B_x$ denote the branch set of $C$ containing $x$, and we declare their union $Y:= \bigcup_{C\in \cc'} B_y$ to be a branch set of $M$. Every other branch set of any $C\in \cc'$ is declared to be a branch set of $M$ too. The latter are pairwise disjoint since $x$ is the only common vertex of any two elements of $\cc'$. This $M$ is a $\Phi_i$ or $\Phi'_i$ minor of \g as in our statement: if each element of $\cc'$ is a $\Theta_i$ minor, then $M$ is a $\Phi_i$ or $\Phi'_i$ minor; it is the former if the branch set containing $x$ always corresponds to a marked vertex, and the latter if it always corresponds to an unmarked one.
 
This completes the proof of \eqref{crit2}, from which our statement immediately follows: if \eqref{crit2} returns a $\Phi_i$ or $\Phi'_i$ minor then our task is complete, and if it returns a pair of \SU-critical vertices we apply  \Lr{lem SU dstar} to obtain a $\Phi_5$ minor. Thus in every possible case we have found one of the desired $U$-marked graphs as a minor of \G.
\end{proof}

\begin{remark}
\textup{The converse of \Lr{lem SU} holds too, that is, if \g has one of the \umm s as in the statement, then \g is not in \SU. Indeed, the $U$-cone of each of these graphs is not in \Sig,  because such a cone contains one of the forbidden structures of \Tr{main thm} as we will see in the proof of \Tr{main thm}.} 
\end{remark}

%Suppose \g is a planar graph which is not \uopE\ for some $U\subset V(G)$. Then \g contains some $X_i, 1\leq i \leq 5$ or $\omega \cdot \Theta_i, 1\leq i \leq 4$ as a \umm. 

\section{The excluded minors of \Sig} \label{sec proof}

We can now prove our main result.
%%%%%%%%%
\begin{proof}[Proof of \Tr{main thm}]
Let us start with the easy direction, that if \g has a minor as in the statement, then  $G \not\in \Sig$. It suffices to show that none of these graphs lies in $\Sig$. This is indeed the case, as finite subgraphs of those graphs are well-known to have unbounded Euler genus; see e.g.\ \cite{RiSiCro} for $\Sig_8=K_{3,\omega}$, and \cite{MilAdd} for the other seven $\Sig_i$. 

In particular, none of the $\Sig_i$ embed in a closed non-orientable surface either, and so we can drop the word `orientable' in the statement of \Trs{main thm} and \ref{thm decomp Intro}. \mymargin{Mention projective grids?}

%has infinite genus. It is well-known that the genus of any graph equals the sum of the genera of its blocks \cite[Corollary~1]{BHKY}, and it also equals the sum of the genera of its components  \cite[Corollary~2]{BHKY}. Thus each of $\omega \cdot K_5, \omega \cdot \Ktt, \bigvee K_5, \bigvee \Ktt$ has infinite genus.

\medskip
For the other direction, suppose $G \not\in \Sig$. If $G \not\in \Sig_V$, then by  \Cr{cor sigV} we can find $\omega \cdot K_5$ or $\omega \cdot \Ktt$ as a minor of \G, and so our statement holds in this case. 

So assume  $G \in \Sig_V$, and choose a smallest set of vertices $F=\{v_1,\ldots,v_k\}$ \st\ $G-F \in \Sig$. We may assume \obda\ that $|F|=1$, i.e.\ $G -v_1\in \Sig$, because we could  replace $G$ by $G- \{v_2,\ldots,v_k\}$.

%Suppose first that \g is connected. Then we can find a finite subtree $T\subset G$ containing $v_i, 1\leq i \leq k$. 
Let $U= N(v_1)$ be the set of vertices of $G':= G-v_1$ sending an edge to $v_1$. %Thus $G':= G-F$ lies in \plE. %Let $G/T$ denote the graph obtained from $G$ by contracting $T$ into a vertex. 
If $(G',U)$ lies in \SU, then \g lies in \Sig\ by the definitions, and we have a contradiction. 

Thus we can assume from now on  that $(G',U)$ does not lie in \SU, and apply \Lr{lem SU}, to obtain $(G',U)> X$ where $X$ is one of the excluded marked graphs of that Lemma. We use use the singleton $v_1$ as an additional branch set to obtain the $U$-cone $X':= C_U(X)$ as a minor of \G. Ignoring the marking, we will thus obtain one of the graphs of \fig{figExSig} as a minor, depending on which graph $X$ coincides with, as follows.

\medskip
The easiest case is where $X = \Phi_1$, in which case we have $X' = \Sig_5$.  \\
%The remaining cases can be treated similarly:\\ 
If $X = \Phi_2$, we obtain $X' > \Sig_3$ by forming a branch set comprising the two infinite-degree vertices of $X'$. \\
If $X = \Phi'_2$, we have $X' > \Sig_6$; to see this, notice that after removing the central vertex of $\Phi'_2$, each component has a 4-cycle in alternating colours. \\ 
If $X = \Phi_3$, we clearly have $X' = \Sig_6$. \\
If $X = \Phi'_3$, we clearly have $X' = \Sig_7$. \\
If $X = \Phi_4$, then similarly to the  $X = \Phi_2$ case, we obtain $X' > \Sig_4$ by forming a branch set comprising the two  infinite-degree vertices of $X'$. \\
If $X = \Phi'_4$, we have $X' > \Sig_6$; to see this, remove the edge from the central vertex $w$ of $\Phi'_4$ to each grey vertex, and contract the other edge incident with that grey vertex. Each component of $\Phi'_4-w$ thus becomes a 4-cycle with alternating colours. \\
If  $X = \Phi_5$,  we have $X' = \Sig_8 = K_{3,\omega}$. 

\medskip
If $X = \omdot \Theta_1$, we have $X' = \Sig_3 = \bigvee K_5$ (\fig{figOis}). \\
If $X$ is $\omdot \Theta_3$,  we have $X' = \Sig_4= \bigvee \Ktt$. \\
If $X =\omdot \Theta_2$, we obtain  $X' > \Sig_3$ by contracting one of the two marked vertices $u_i$ of each copy $T_i$ of $\Theta_2$ onto the cone vertex $v_1$; note that this contracted vertex inherits the three edges from $u_i$ to  $T_i$, as well as a fourth edge from $v_1$ to the remaining vertex of $T_i$. \\
Finally, if $X = \omdot \Theta_4$, then similarly we obtain  $X' > \Sig_4$ by contracting one of the two marked vertices of each copy of $\Theta_4$ onto $v_1$.

To summarize, assuming $G \not\in \Sig$, we obtained one of the graphs of \fig{figExSig} as a minor of \G.
 %Finally, if $X = \Phi_5$ we have $X' > \Sig_8 = K_{3,\omega}$.  %(\fig{figXZ}). %This completes the case where 
%\medskip It remains to handle the case where \g is disconnected. Note that all but finitely many components of \g are planar since we are assuming $G\in \Sig_V$. Repeating the above argument to each non-planar component $C$ of \G, whereby we let $T$ be a subtree containing those $v_i$ that lie in $C$, we obtain either a contradiction or one of the desired minors in $C$.
\end{proof}

\section{Almost outerplanar graphs} \label{sec OPE}

The aim of this section is to prove the analogue of \Prr{prop FE} for \OP\ graphs. This is included as a result of independent interest, proved using some of the techniques developed above. 

\medskip
Let \defi{\OuPl} denote the class of countable \OP\ graphs.
Let $G_1$ (respectively, $G_2$) be the graph obtained from $\omdot K_{2,3}$ by choosing a vertex of degree 3 (resp.\ degree 2) from each copy of $K_{2,3}$ and identifying them (\fig{figG1G2}). %Let $G_2$ be the graph obtained from $\omdot K_{2,3}$ by choosing a vertex of degree 2 from each copy of $K_{2,3}$ and identifying them.
\begin{figure} 
\begin{center}
\begin{overpic}[width=.6\linewidth]{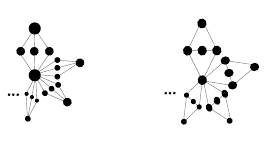} 
\put(11,0){$G_1$}
\put(72,0){$G_2$}
\end{overpic}
\end{center}
\caption{Two of the excluded minors of \Prr{prop OP}, arising by combining infinitely many copies of $K_{2,3}$.} \label{figG1G2}
\end{figure}

\begin{proposition} \label{prop OP}
%Let $\rme{OP}$ . Then 
$\rme{OP}= \forb{\omdot K_4, \omdot K_{2,3}, \bigvee K_4, G_1,G_2, K_{2,\omega}}$.
\end{proposition}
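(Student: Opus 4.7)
My plan is to follow the template of \Prr{prop FE}, adapted to the richer structure of the \OP\ obstructions. The easy direction is straightforward: each of the six listed graphs contains infinitely many pairwise edge-disjoint $K_4$ or $K_{2,3}$ subdivisions, so removing any finite edge set leaves at least one intact, and so none lies in $\rme{OP}$.

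For the converse, suppose $G \notin \rme{OP}$. By \Lr{lem Oi} applied with $U = V(G)$ together with the remark following it, a countable graph is outerplanar \iff\ it has no $K_4$ or $K_{2,3}$ minor. Thus $G - F$ contains a $K_4$ or $K_{2,3}$ minor for every finite $F \subset E(G)$, and since both of these graphs have maximum degree $\leq 3$, this minor can be realised as a subdivision. Iterating greedily, setting $F_0 = \emptyset$ and $F_n := F_{n-1} \cup E(S_n)$ for each subdivision $S_n$ chosen from $G - F_{n-1}$, I obtain an infinite family $\ck = \{S_n\}_{n \in \N}$ of pairwise edge-disjoint $K_4$- or $K_{2,3}$-subdivisions in $G$. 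Pigeonhole then allows me to assume all $S_n$ are subdivisions of the same $H \in \{K_4, K_{2,3}\}$.

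I next try to extract a pairwise vertex-disjoint infinite subfamily of $\ck$ greedily; success gives $\omdot H$ as a minor. Otherwise, the process halts and pigeonhole provides a vertex $v$ contained in infinitely many members of $\ck$; I restrict $\ck$ to that subfamily. If some $w \neq v$ is also in infinitely many remaining $S_n$, I restrict further to those containing both, observe that $G' := \bigcup_{S}(S - v)$ is connected (each $S - v$ is connected and contains $w$) and that $N_G(v)$ is infinite (each $S$ provides at least two edges at $v$, and the $S_n$ are edge-disjoint), and apply the \scl\ (\Lr{SC lem}) in $G'$ with $U := N_G(v)$. Contracting the spine of the comb in the comb case, I obtain a vertex $c$ linked to $v$ by infinitely many internally-disjoint paths, giving a $K_{2,\omega}$ minor as in \Prr{prop FE}.

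If instead $v$ is the unique vertex in infinitely many $S_n$, a further greedy step yields an infinite subfamily $\ck'$ pairwise vertex-disjoint outside $v$. For each $S \in \ck'$, let $B^S$ be a branch set of the $H$-model inside $S$ containing $v$ (choosing one of two adjacent branch sets if $v$ is a subdivision vertex of $S$). The minor with central branch set $\bigcup_{S \in \ck'} B^S$ (connected, since every $B^S$ contains $v$) and the remaining branch sets of each $S$ realises $\bigvee K_4$ when $H = K_4$. When $H = K_{2,3}$, pigeonhole on the role of $v$ splits further: if infinitely many $S$ allow $v$ to lie in a side-$A$ (degree-$3$) branch set, the same construction yields $G_1$; in the complementary case $v$ is a branch vertex of degree $2$ of $S$ in cofinitely many $S$, forcing $v$ onto side $B$ and yielding $G_2$. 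The main obstacle is exactly this last bookkeeping: ensuring that the choice of branch set absorbing $v$---and hence whether one obtains $G_1$ or $G_2$---is compatible across infinitely many $S_n$ depending on whether $v$ appears as a branch or a subdivision vertex of each.
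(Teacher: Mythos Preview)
Your proof is correct and follows essentially the same template as the paper's: both arrive at a distinguished vertex $v$ lying in infinitely many $K_4$- or $K_{2,3}$-subdivisions that are otherwise disjoint, and then run the same case analysis (unique $v$ gives $\bigvee K_4, G_1$ or $G_2$; a second common vertex $w$ gives $K_{2,\omega}$ via the star--comb lemma). The only notable difference is in the opening reduction: the paper first invokes \Prr{prop PV} to dispose of the case $G\notin\rmv{OP}$ and then extracts a \emph{critical} vertex $v$ from a minimal $W$ with $G-W\in\rme{OP}$, whereas you build pairwise edge-disjoint subdivisions directly and find $v$ by a greedy/pigeonhole argument---a slightly more self-contained route that avoids the detour through $\rmv{OP}$.
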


%%%%%%%%%
\begin{proof}
Suppose $G\not\in \rme{OP}$. If $G\not\in \rmv{OP}$, then by \Prr{prop PV}, and the well-known fact that $\OuPl=\forb{K_4, K_{2,3}}$, we deduce that \g contains one of $\omdot K_4, \omdot K_{2,3}$ as a minor, and we are done. Thus we may assume that $G\in \rmv{OP}$. Choose a finite, minimal $W\subset V(G)$ \st\ $G-W\in \rme{OP}$, which exists since we can even achieve $G-W\in {OP}$.  Pick $v\in W$, and let $G':= G - (W\sm \{v\})$. Then $G'\not\in \rme{OP}$, but $G'- v\in \rme{OP}$ by the choice of $W$. We will find one of the desired minors in $G'$.

Note that there must be infinitely many subdivisions of $K_4$ or $K_{2,3}$ in $G'$ containing $v$. Even more, 
\labtequ{K4s}{there is an infinite set $\cc$ of subdivisions of $K_4$ or $K_{2,3}$ in $G'$ containing $v$, no two of which share an edge of $v$.}
Indeed, if not, then there is a finite set $F$ of edges of $v$ \st\ $G'-F$ contains no subdivision of $K_4$ or $K_{2,3}$ containing $v$, which contradicts $G'\not\in \rme{OP}$.

We distinguish two cases. If $v$ is the only vertex contained in infinitely many elements of $\cc$, then we can greedily find an infinite subset $\cc' \subseteq \cc$ \st\ $C\cap D= \{v\}$ holds \fe\ $C,D\in \cc'$. We may further assume that all elements of $\cc'$ are copies of $K_4$ or they are all copies of $K_{2,3}$, and that $v$ has the same degree ---either 2 or 3--- in each of them. Thus we find one of $\bigvee K_4, G_1,G_2$ as a minor of $\bigcup \cc'$, with $v$ being the vertex of infinite degree. %, depending on . 

Otherwise, \ti\ a vertex  $w\neq v$ contained in each element of an infinite subset $\cc' \subseteq \cc$. Note that $G'':= \bigcup \cc' - v$ is a connected graph, containing infinitely many neighbours of $v$ by \eqref{K4s}. We apply the star-comb lemma  to $G''$ with $U$ being the set of neighbours of $v$, to obtain a $K_{2,\omega}$ minor in $\G'$ as in the proof of \Prr{prop FE}. 

In all cases we have obtained one of $\omdot K_4, \omdot K_{2,3}, \bigvee K_4, G_1,G_2 , K_{2,\omega}$ as a minor of \G.
\end{proof}

\section{Almost forests revisited} \label{sec AFR}

The aim of this section is to prove the following result, which complements \Prr{prop FE}.

\begin{proposition} \label{prop FCE}
Let $\frs$ denote the class of countable forests. Then $\frsCE= \forb{\omdot K_3, \bigvee K_3} = \rmece{\frs}$.
\end{proposition}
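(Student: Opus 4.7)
My plan is to prove the chain $\frsCE \subseteq \rmece{\frs} \subseteq \forb{\omdot K_3, \bigvee K_3} \subseteq \frsCE$, which yields both equalities. The first inclusion is immediate. The second holds because in each of $\omdot K_3$ and $\bigvee K_3$ every edge lies in at most one triangle, so any finite combination of edge-deletions and edge-contractions destroys only finitely many of the infinitely many triangles, leaving the result with triangles and therefore not a forest.

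The hard inclusion is $\forb{\omdot K_3, \bigvee K_3} \subseteq \frsCE$. Let $G$ belong to this class; I produce a finite $F \subseteq E(G)$ with $G/F$ a forest. I may assume $G$ is connected, since at most finitely many components can contain a cycle (else $\omdot K_3$ arises), and the rest are forests that need no contraction. Applying \Prr{prop PV} with $\frs = \forb{K_3}$ gives $G \in \frsV = \forb{\omdot K_3}$, so there exists a finite $W \subseteq V(G)$ with $G - W$ a forest. A finite subtree of $G$ joining the vertices of $W$ upgrades this to a finite connected $W^+ \supseteq W$ with $G - W^+$ still a forest. The central strategy is to enlarge $W^+$ to a finite $W^* \supseteq W^+$ such that $G[W^*]$ is connected and each component of $G - W^*$ has at most one vertex adjacent to $W^*$. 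Once such $W^*$ is found, taking $F$ to be the edges of a spanning tree of $G[W^*]$ collapses $W^*$ to a single vertex attached by at most one edge per component of the forest $G - W^*$, yielding a forest.

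The main obstacle is constructing $W^*$. I would process each component $C$ of $G - W^+$ that has at least two vertices in $N(W^+)$. If $V(C) \cap N(W^+)$ is finite, I absorb a finite Steiner subtree of $C$ spanning it into $W^+$; each residual piece of $C$ is then attached to $W^+$ by a single edge. The hard case is infinite boundary, in which case by pigeonhole some $v \in W^+$ has infinitely many neighbours $N_v$ in the tree $C$, and the star-comb \Lr{SC lem} applied inside $C$ to $N_v$ yields a subdivided star centred at some $c \in V(C)$ or a comb. The decisive claim is that $\bigvee K_3 \not< G$ rules out the comb and forces all but finitely many star-paths $P_j$ (from $c$ to $x_j \in N_v$) to have length $1$: a comb gives $\bigvee K_3$ by taking pairwise disjoint segments of its ray as the two non-apex branch-sets of infinitely many triangles sharing apex $v$; and, using one path $P_0$ together with $v$ and the edge $v x_0$ as the apex branch-set, any further path $P_j$ of length $\geq 2$ splits at an internal tree edge into two connected segments which, together with the apex (via the edges $v x_j$ and $c$--(first interior vertex of $P_j$)), form a triangle, and varying $j$ produces pairwise apex-disjoint triangles. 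Hence almost all $x_j \in N_v \cap V(C)$ are leaves of $C$ adjacent to a common $c$, so the bad component is essentially a $K_{2,\omega}$-substructure with $\{v, c\}$ as the small side. Absorbing the length-$2$ path through $v$, some $x_1$, and $c$ into $W^*$ merges these three vertices in the quotient, and the duplicate edges $v x_j$ and $c x_j$ for $j \neq 1$ coalesce, leaving every remaining leaf $x_j$ as a singleton component of $G - W^*$ attached by a single edge. An analogous pinch-vertex dichotomy disposes of bad components linking several vertices of $W$ simultaneously (again $\bigvee K_3$ is forced unless all boundary vertices of each such component pass through a single shared vertex). Because $W$ is finite, only finitely many ``types'' of infinite bad families can occur (indexed by subsets of $W$), each absorbed by a single finite addition, so the construction terminates with a finite $W^*$.
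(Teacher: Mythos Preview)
Your overall architecture is sound and close to the paper's: reduce to $G\in\frsV$, contract a finite connected $W^+\supseteq W$, and then argue that either $\bigvee K_3$ appears or only finitely many further contractions are needed. The local computations are also correct: a $U$-comb in a tree component $C$ with $U=N(v)\cap V(C)$ does yield $\bigvee K_3$, and a $U$-star with infinitely many paths of length $\ge 2$ (i.e.\ a \emph{2-star}) does too, by exactly the branch-set argument you give.

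The genuine gap is termination. The star--comb lemma only hands you a $U$-star whose leaf set is some infinite \emph{subset} of $U$, so from ``all but finitely many of the star's paths have length~1'' you may conclude that infinitely many elements of $U$ are adjacent to the centre $c$, not that almost all of $U$ are. After absorbing $c$, the remaining elements of $U\setminus N(c)$ may still be infinite, forcing another application of star--comb, a new centre $c'$, and so on. Your closing sentence (``finitely many `types' indexed by subsets of $W$'') does not control this, because each absorption enlarges $W^*$ and creates new potential attachment points; the index set is not fixed. Concretely, one must show that in the absence of a $U$-comb and of a 2-star, some \emph{finite} set of vertices dominates $U$---and this is precisely the extra output (iii) of the paper's strengthened star--comb lemma (\Lr{2star comb}), which is not a corollary of the ordinary star--comb lemma.

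The paper's proof is organised around this point: it first contracts $W^+$ to a single vertex $v$, then for each of the finitely many components $C_i$ of $G'-v$ with $|N(v)\cap V(C_i)|\ge 2$ it applies \Lr{2star comb} directly. Outputs (i) and (ii) give $\bigvee K_3$; output (iii) gives a finite dominating set $W_i$, and contracting a finite subtree $T_i\supseteq W_i$ of $C_i$ makes $G'_i/T_i$ a tree in one shot. So the paper avoids any iterative absorption and the attendant termination issue. Your argument can be repaired by inserting exactly this lemma (or by proving, in the tree $C$, that if no $U$-comb and no 2-star exist then the set of $U$-dominant vertices is finite and its closed neighbourhood misses only finitely many points of $U$); but as written, the proof is incomplete. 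Two smaller points: you also need (and tacitly use) that only finitely many components of $G-W^+$ have at least two boundary vertices, else $\bigvee K_3$ already arises---this is easy but worth stating; and the inclusion $\rmece{\frs}\subseteq\forb{\omdot K_3,\bigvee K_3}$ uses that $\rmece{\frs}$ is minor-closed, which is true but not entirely automatic.
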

%%%%%%%%%
Combined with \Prr{prop FE}, it follows that $\frsE \subsetneq \frsCE$. For our proof we will need the following extension of the star-comb lemma.

A \defi{2-star} is a graph obtained from the star $K_{1,\omega}$ by subdividing each edge at least once. In other words, a 2-star is obtained from the disjoint union of infinitely many paths of length at least 2 by identifying their first vertices.

We say that a vertex-set $D$ \defi{dominates} another vertex-set $U$ of a graph, if $\cls{N}(D) \supseteq U$, where  $\cls{N}(D)$ consists of $D$ and all vertices sending an edge to $D$.

\begin{lemma} \label{2star comb}
Let $G$ be a connected graph, and $U\subseteq V(G)$. Then \g contains at least one of the following:
\begin{enumerate}
	\item \label{SC i} A $U$-comb;
	\item \label{SC ii} a 2-star with leaves in $U$;
	\item \label{SC iii} a finite vertex-set dominating $U$.
\end{enumerate}
\end{lemma}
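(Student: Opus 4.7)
The plan is to argue by contrapositive: assuming that \g contains no $U$-comb and no 2-star with leaves in $U$, I will produce a finite vertex set dominating $U$. The main tool is the \scl\ (\Lr{SC lem}), applied repeatedly.

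I build an increasing sequence of finite vertex sets $D_0 \subseteq D_1 \subseteq \ldots$ starting with $D_0 := \emptyset$. At step $i$, set $U_i := U \setminus \cls{N}(D_i)$. If $U_i$ is finite, then $D_i \cup U_i$ is a finite dominating set, giving case~(iii). Otherwise I apply the \scl\ to $U_i$ in \G\ (connected by hypothesis). A $U_i$-comb would be a $U$-comb, ruled out by assumption, so I obtain a $U_i$-star centred at some vertex $c_i$. If infinitely many of its paths from $c_i$ to leaves in $U_i$ have length at least $2$, they form a 2-star with leaves in $U_i\subseteq U$, also ruled out. Hence $c_i$ must have infinitely many neighbours in $U_i$; in particular $c_i\notin D_i$. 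Set $D_{i+1} := D_i \cup \{c_i\}$ and continue.

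If this process terminates we are done; otherwise it yields an infinite sequence $(c_i)_{i\in \N}$ of pairwise distinct ``rich'' vertices. I then apply the \scl\ once more, this time to the vertex set $\{c_i : i\in \N\}$ in \G. In the comb output case I obtain a ray $R$ and pairwise disjoint teeth ending at infinitely many $c_i$'s; extending each tooth by one edge to a neighbour of its $c_i$ in $U$ should give a $U$-comb with spine $R$, contradicting assumption. In the star output case I obtain a centre $x$ and pairwise internally disjoint paths from $x$ to infinitely many $c_i$'s; extending each such path by one edge to a neighbour of its $c_i$ in $U$ should give a 2-star centred at $x$ with leaves in $U$ (paths of length $\geq 2$, since $x\neq c_i$), again a contradiction.

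The main obstacle lies in ensuring that these one-edge extensions are pairwise disjoint. A candidate $w_i \in N(c_i) \cap U$ might happen to lie on some other tooth/path, or a chosen tooth/path might already contain some previously selected $w_{i'}$. The key observations are that the teeth/paths from the \scl\ output are pairwise (internally) disjoint (so each vertex lies on at most one of them) and each $c_i$ has infinitely many neighbours in $U$. Hence at stage $i$ of a greedy construction only finitely many indices are ``blocked'' (those whose tooth/path meets $\{w_0,\ldots,w_{i-1}\}$), and only finitely many candidate $w_i$'s are forbidden by previously built pieces. Picking an admissible index and then $w_i$ from the infinite set $N(c_j)\cap U$ minus this finite forbidden set produces the desired disjoint extensions. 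A minor subtlety in the comb case is that $w_i$ may accidentally land on the infinite spine $R$, but this is harmless: the singleton $\{w_i\}$ then serves as a trivial $U$--$R$ path, and the construction goes through.
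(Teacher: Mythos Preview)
Your proof is correct. The core strategy coincides with the paper's: both arguments hinge on applying the star--comb lemma to a set of ``$U$-dominant'' vertices (vertices with infinitely many neighbours in $U$) and then extending each resulting tooth or star-arm by one edge into $U$, with a greedy argument to secure disjointness. The organisation differs, however. The paper defines the set $D$ of all $U$-dominant vertices up front and splits into cases according to whether $|D|$ is infinite or finite; in the latter case it needs a separate component analysis of $G - \cls{N}(D)$ to locate a 2-star or comb. Your contrapositive, iterative construction of the $c_i$ sidesteps this: once you assume no $U$-comb and no 2-star exist, each application of the star--comb lemma to $U_i$ is forced to produce a new $U$-dominant vertex, so either the process halts (giving the dominating set directly) or you obtain infinitely many $c_i$ and finish as the paper does in its $|D|=\infty$ case. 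This makes your version slightly more economical. One small point worth tightening: in the star case you should also forbid $w_i \in Q_{j_i} \cup \{x\}$ (a finite set), since otherwise the extended arm could close into a cycle or drop to length~1; your greedy framework absorbs this without difficulty.
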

%%%%%%%%%
\begin{proof}
We say that a vertex $v$ is \defi{$U$-dominant}, if $|N(v) \cap U|=\infty$, where $N(v)$ stands for the set of neighbours of $v$. Let $D$ denote the set of $U$-dominant vertices of \G.

If $|D|=\infty$, we apply the star-comb lemma to $G,D$, to obtain either a $D$-star $S$ or a $D$-comb $K$. In the former case, we construct a 2-star with leaves in $U$ as follows. Let $\seq{w}$ be an enumeration of the leaves of $S$, and let $s$ be the centre of $S$. Pick a neighbour $u_0\neq s\in U$ of $w_0$, and add the $w_0$--$u_0$~edge to $S$. If $u_0\in V(S)$, then we also \defi{delete} the subpath of $S - s$ containing $u_0$. For $i=1,2,\ldots$, we proceed similarly: we let $w_{a_i}$ be the next leaf of $S$ that has not been deleted, we pick a neighbour $u_i\in U$ of $w_{a_i}$ that does not coincide with $s$ or any $u_j, j<i$, attach it to $w_{a_i}$, and delete the subpath of $S - s$ containing $u_i$ if it exists. Such a $u_j$ always exists, because $w_{a_i}\in D$ has infinitely many neighbours in $U$. After $\omega$ steps we have transformed $S$ into the desired 2-star $S'$ witnessing \ref{SC ii}. 

In the other case where we obtain a comb $K$, then either $|K\cap U|=\infty$, and we are done as \ref{SC i} holds, or way may assume that $|K\cap U|=\emptyset$ by replacing $K$ with a sub-comb. In this case we construct a comb $K'$ by attaching leaves in $U$ to infinitely many vertices of $K\cap D$ by imitating the way we obtained $S'$ from $S$ (except that now we never have to delete anything).

\medskip
Thus we have settled the case $|D|=\infty$, and we now proceed to the case where $|D|<\infty$. Suppose first that $U':= U \sm \cls{N}(D)$ is finite. Then $D \cup U'$ is a finite set dominating $U$, i.e.\ option \ref{SC iii} holds. 

So suppose $U'$ is infinite. We apply the star-comb \Lr{SC lem} to \g and $U'$. If it returns a $U'$-comb then option \ref{SC i} holds. If it returns a $U'$-star $S$, then as no vertex is $U'$-dominant by the definition of $U'$, it follows that $S$ contains a 2-star with leaves in $U'$, which means that option \ref{SC ii} holds. 

  %, and consider the following two cases: 
\comment{ 	\begin{itemize}%[leftmargin=*]
	\item[Case i:] \label{D i} No component of $G-\cls{N}(D)$ has infinite intersection with $U'$. In this case we obtain a 2-star with leaves in $U$ as follows. For each of the infinitely many components $C_i$ of $G-\cls{N}(D)$ intersecting $U'$, let $P_i$ be a \pth{U'}{{N}(D)}\ in \G. If infinitely many of these $P_i$ terminate at a common vertex $w$ of ${N}(D)$, then their union contains the desired 2-star; indeed, since $w\not\in D$, at most finitely many of the $P_i$ terminating at $w$ comprise a single edge. If, on the other hand, only finitely many $P_i$ terminate at each $w\in {N}(D)$, then we can choose an infinite subset of pairwise disjoint $P_i$, extend each of them by one edge to reach $D$, and find an $s\in D$ at which infinitely many of these $P_i$ terminate. We thus obtain the desired 2-star, centred at $s$.
	\item[Case ii:] \label{D i} Some component $C$ of $G-\cls{N}(D)$ has infinite intersection with $U'$. In this case we apply the star-comb lemma to the pair $C,(U'\cap C)$. Since $C \cap D=\emptyset$, this yields that one of \ref{SC i},  \ref{SC ii} holds.
	\end{itemize}
}
Thus in every case we have obtained one of the desired sub-structures.
\end{proof}

\begin{proof}[Proof of \Prr{prop FCE}]
Suppose $G\not\in \frsCE$. 
If $G\not\in \frsV$, then by \Prr{prop PV} we deduce that \g has a $\omdot K_3$ minor, and we are done. Thus we may assume that $G\in \frsV$. Choose a finite $W\subset V(G)$ \st\ $G-W\in \frs$, minimal with this property. Assume first that \g is connected. Thus we can find a finite subtree  $T$ of \g containing $W$. Let $G':= G/T$ be the minor of \g obtained by contracting $T$ onto a vertex $v$. Then $G'\not\in \frsCE$ since $G\not\in \frsCE$ and $T$ is finite. Moreover, $F:= G'-v\in \frs$. We will prove that $G'$, and hence \G, has a $\bigvee K_3$ minor.

If there is an infinite set $\{C_n\}_{\nin}$ of components of $F$ \st\ $v$ sends at least two edges $e_n,f_n$ to $C_n$, then by appending an \pth{e_n}{f_n}\ path through $C_n$ for each $n$ we obtain a $\bigvee K_3$ minor in $\{v\} \cup \bigcup_{\nin} C_n$. Thus we may assume that there is a finite set $\{C_1, \ldots, C_k\}$ of components of $F$ sending at least two edges to $v$, and all other components of $F$ send at most one edge to $v$. Note that $G'-(C_1 \cup \ldots \cup C_k)$ is a forest.  %It follows that at least one of the $C_i, 1\leq i \leq k$.
 
For each $i=1,\ldots, k$, we let $G'_i$ be the subgraph of $G'$ induced by $C_i \cup \{v\}$, and let $U_i$ be the set of vertices of $C_i$ sending an edge to $v$. We apply \Lr{2star comb} to $G'_i-v,U_i$, and obtain a $U_i$-comb, or a 2-star with leaves in $U_i$, or a finite set $W_i$ of vertices dominating $U_i$. If this yields a $U_i$-comb $C$, then we easily obtain a subdivision of $\bigvee K_3$ in $G'_i[C \cup \{v\}]$. If instead we obtain a 2-star $S$, with centre $s$ and leaves in $U_i$, then by contracting a \pth{v}{s}\ we obtain a subdivision of $\bigvee K_3$ in $S \cup \{v\}$, centred at the contracted path. Finally, if \Lr{2star comb} returns a finite dominating set $W_i$, then we let $T_i$ be a finite subtree of $G'_i$ containing  $W_i \cup \{v\}$. Note that $G'_i/T_i$ is a tree. %, we are happy and proceed to the next value of $i$. If not, then we claim that
%
%\labtequ{Gi}{$G'_i$ has a $\bigvee K_3$ minor.}
%
%Notice that this claim completes our proof, because 
To summarize, either we have obtained a $\bigvee K_3$ minor in \G, or a finite $T_i\subset G'_i$ \fea\ $1\leq i \leq k$ \st\ $G'_i/T_i$ is a tree. But the latter implies that contracting each $T_i$ turns $G'$ into a forest, contradicting the fact that $G'\not\in \frsCE$.

It remains to handle the case where \g is disconnected. In this case we apply the same reasoning to one of the components of \g intersecting $W$, and find a $\bigvee K_3$ minor there.

\medskip
To deduce that $\frsCE= \rmece{\frs}$, note that $\frsCE \subseteq \rmece{\frs}$, and that none of the excluded minors $\omdot K_3,  \bigvee K_3$ of the former lies in the latter. Thus $\rmece{\frs} \sm \frsCE$ is empty.
\end{proof}

\section{A star-comb lemma for 2-connected graphs} \label{sec SC}

While trying to prove \Tr{main thm} I came up with the following strengthening of the \scl\ for 2-connected graphs. Although it is not used for any of our proofs, I decided to include as it might become useful elsewhere. The \scl\ is one of the most useful tools in infinite graph theory. Some other strengthenings were obtained in a recent series of 4 papers by B\"urger \& Kurkofka \cite{BurKurDuaI}--\cite{BurKurDuaIV}. A related result determining unavoidable induced subgraphs for infinite 2-connected graphs is obtained by Allred, Ding \& Oporowski \cite{AlDiOpUna}. 

In analogy with $U$-stars and $U$-combs as in the statement of the \scl, we introduce the following structures. A \defi{double-star} is a subdivision of $K_{2,\omega}$. A \defi{ladder} consists of two disjoint rays $R,L$ and an infinite collection of pairwise disjoint \pths{R}{L}. A \defi{fan} consists of a ray $R$, a vertex $d\not\in V(R)$, and an infinite collection of \pths{d}{R}\ having only $d$ in common. \mymargin{(\fig{})} For each of these three terms, adding the prefix \defi{$U$-} means that the structure has infinitely many of its vertices in $U$. With this terminology, \Tr{SC 2con intro} from the introduction can be formulated as follows.

\begin{theorem} \label{SC 2con}
Let \g be a 2-connected graph, and $U\subseteq V(G)$ infinite. Then \g contains a $U$-double-star, or a $U$-ladder, or a $U$-fan. 
\end{theorem}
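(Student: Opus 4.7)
The plan is to apply the star-comb lemma (Lemma~\ref{SC lem}) twice in succession, using the 2-connectedness of $G$ to keep the graph connected between the two applications, and then to combine the two outputs via a Menger-type argument to produce the desired structure. The first application to $(G,U)$ yields either (A) a $U$-star $S$ with centre $v$ and leaf set an infinite $U'\subseteq U$, or (B) a $U$-comb with spine $R=r_0r_1\ldots$ and teeth ending in an infinite $U'\subseteq U$. Setting $x=v$ in case (A) and $x=r_0$ in case (B), the graph $G-x$ is connected, so Lemma~\ref{SC lem} applies anew inside $G-x$ with target $U'$. The four resulting sub-cases (star+star, star+comb, comb+star, comb+comb) prescribe the two ``hubs'' of the intended structure: two vertices for a prospective double-star, a vertex and a ray for a fan, or two rays for a ladder.

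Common to all sub-cases, write $P_u$ and $Q_u$ for the paths supplied by the first and second applications respectively, ending at $u\in U''\subseteq U'$ where $U''$ is an infinite common subset; each walk $P_u+Q_u$ runs from the first hub to the second. The $P_u$ are pairwise internally disjoint, as are the $Q_u$. I would now argue via infinite Menger's theorem that the union $H:=\bigcup_{u\in U''}(P_u\cup Q_u)$ contains infinitely many internally disjoint paths between the two hubs: any finite would-be separator $T$ in $H$ must meet each of the $\omega$-many walks $P_u+Q_u$, but each $t\in T$ lies on at most one $P_u$ and at most one $Q_u$, so $T$ meets at most $2|T|$ of the walks, a contradiction. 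This produces a subdivided $K_{2,\omega}$ in star+star, a fan in star+comb or comb+star, or (after trimming $R$ and the second spine $R^*$ to disjoint tails when $R\cap R^*$ is finite) a ladder in comb+comb.

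The main difficulty is ensuring the resulting structure has infinitely many of its vertices in $U$, not merely infinitely many vertices overall. I would address this by running the Menger extraction greedily: at stage $n$, having committed only a finite vertex set $V_n$, one picks the next disjoint path to pass through a fresh $u\in U''$ whose $P_u$ and $Q_u$ avoid $V_n$ apart from the hubs --- possible for all but finitely many $u$, since the $P_u$ and the $Q_u$ are each internally disjoint. When $P_u$ and $Q_u$ accidentally share a vertex other than $u$ (in which case $P_u\cup Q_u$ need not contain a simple hub-to-hub path through $u$), the fan lemma for 2-connected graphs --- any three vertices lie on a common path --- combined with a careful rerouting within $G$ minus the already-committed internal vertices, restores a simple hub-to-hub path through $u$. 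The subtlest situation is the comb+comb sub-case when $R\cap R^*$ is infinite, where the two rays may share an ``interleaved'' common sub-structure rather than a tail; here one extracts a $U$-fan from the shared part via a further application of Lemma~\ref{SC lem} to the tooth-endpoints on the shared vertices.
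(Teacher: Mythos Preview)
Your approach has a genuine gap in the comb+comb sub-case, and it is not a technicality but the heart of the matter. When you delete the single vertex $x=r_0$ and reapply Lemma~\ref{SC lem} to $G-r_0$, nothing prevents the second comb from being the \emph{same} comb with spine $R-r_0$ and the same teeth. Then $R\cap R^*$ is cofinite, the ``shared part'' is all of $R^*$, and your proposed fix---``extract a $U$-fan from the shared part via a further application of Lemma~\ref{SC lem} to the tooth-endpoints''---can again return exactly the same comb. You never exit this loop. In a locally finite $G$ the star-comb lemma \emph{always} returns a comb, so your argument in effect claims to produce two disjoint rays in a 2-connected locally finite graph from the star-comb lemma alone; but that is essentially Halin's theorem, and the paper invokes it (together with the refinement from \cite{fleisch}) precisely for this reason. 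Removing one vertex uses 2-connectedness far too weakly to manufacture a second, genuinely new, ray.

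There is a second gap in your greedy step. When $P_u$ and $Q_u$ share a vertex $w\neq u$, the only $v$--$c^*$ path inside $P_u\cup Q_u$ goes through $w$ and misses $u$ entirely, so you appeal to the fan lemma ``within $G$ minus the already-committed internal vertices''. But $G$ minus a finite set need not be 2-connected, so the fan lemma does not apply there, and the rerouted path may be forced through your committed set. For comparison, the paper's proof avoids both issues by working inside a normal spanning tree: in the star case the $U$-ears all end at the finitely many vertices below the centre, forcing a common second centre by pigeonhole; in the comb case it splits according to whether the spine is dominated (yielding a $U$-fan directly) or not (reducing to a locally finite 2-connected subgraph and then invoking Halin).
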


%We will need the following adaptation of \Lr{LemHal} to the non-\lf\ case.

The following follows from the statement of \Tr{SC 2con}, but we need to prove it first as a first step towards the proof of the latter.
\begin{lemma} \label{comb 2con}
Let \g be a 2-connected, \lf\ graph, and $U\subseteq V(G)$ infinite. Then \g has a ray containing an infinite subset of $U$.
\end{lemma}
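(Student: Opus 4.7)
The plan is to apply the \scl\ to $(G,U)$ to obtain a comb structure, and then use 2-connectivity to route a single ray through infinitely many of the teeth-endpoints.

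First, applying the \scl\ to $(G,U)$, the $U$-star alternative is ruled out by local finiteness, since it would require a vertex of infinite degree. We therefore obtain a $U$-comb with spine $R=r_0r_1r_2\dotsm$ and pairwise disjoint teeth $(P_i)_{i\in\N}$, where each $P_i$ is a $w_i$--$u_i$~path with $w_i\in V(R)$ and $u_i\in U$, internally disjoint from $R$ and from the other teeth. If infinitely many teeth are trivial (so $u_i=w_i$), then $R$ already contains infinitely many vertices of $U$ and we are done. Otherwise, by passing to a sub-comb, we may assume every tooth has positive length, so that each $u_i$ lies strictly off the spine.

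The key convergence fact to establish is: \emph{for every finite $F\subset V(G)$, all but finitely many $u_i$ lie in the same component $C_F$ of $G-F$ as the tail of $R$.} Indeed, the teeth being pairwise disjoint forces only finitely many $P_i$ to meet the finite set $F$, and only finitely many $w_i$ can lie in $F\cap V(R)$; for all larger $i$, both $P_i$ and the sub-ray $R[w_i,\infty)$ are disjoint from $F$, so $u_i$ connects to the tail of $R$ via $P_i$ followed by $R$.

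Using this, I would inductively build the desired ray $S=u_{j_1}Q_1u_{j_2}Q_2\dotsm$, maintaining the invariant that at stage $k$, the current endpoint $u_{j_k}$ of the finite simple path $S_k$ lies in $C_{F_k}$, where $F_k:=V(S_k)\setminus\{u_{j_k}\}$. To extend at stage $k$, I would first take a shortest path $\pi$ from $u_{j_k}$ into $V(R)$ inside $C_{F_k}$ (which exists because $C_{F_k}$ contains the tail of $R$), so that $\pi$ meets $R$ only at its far endpoint $r$ which is itself past every vertex of $F_k\cap V(R)$. I would then choose $j_{k+1}$ so large that the tooth $P_{j_{k+1}}$, the attachment $w_{j_{k+1}}$, and the spine segment $R[r,w_{j_{k+1}}]$ are all disjoint from $F_k\cup V(\pi)$; and finally set $Q_k:=\pi\cup R[r,w_{j_{k+1}}]\cup P_{j_{k+1}}$, which is a simple $u_{j_k}$--$u_{j_{k+1}}$~path internally disjoint from $F_k$. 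Appending $Q_k$ produces $S_{k+1}$, whose union over $k$ is the desired ray.

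The main obstacle will be closing the invariant: after $Q_k$ has consumed the tooth $P_{j_{k+1}}$, one must show that $u_{j_{k+1}}\in C_{F_{k+1}}$, i.e.\ that $u_{j_{k+1}}$ has an \emph{alternative} route to the tail of $R$ in $G-F_{k+1}$. This is precisely where 2-connectivity enters crucially: $u_{j_{k+1}}$ has degree at least two in $G$ and hence a neighbour off $P_{j_{k+1}}$, and combining this with the convergence fact applied to $F_{k+1}$ and the freedom to choose $j_{k+1}$ far enough in advance allows one to arrange for the off-tooth neighbourhood of $u_{j_{k+1}}$ to meet the tail-component of $G-F_{k+1}$, thereby closing the induction.
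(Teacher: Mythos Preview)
Your invariant-closure step does not go through as stated. The only use you make of 2-connectivity is that $u_{j_{k+1}}$ has degree at least two, hence a neighbour $v$ off the tooth $P_{j_{k+1}}$; but nothing prevents $v$ from lying in $F_{k+1}$, and your ``convergence fact'' cannot be invoked in advance since $F_{k+1}$ itself depends on the choice of $j_{k+1}$. Concretely, take $G$ with vertex set $\{r_i : i \ge 0\} \cup \{u_i : i \ge 1\}$, spine edges $r_i r_{i+1}$, and for each $i\ge 1$ the two edges $u_i r_{i-1}$ and $u_i r_i$; set $U = \{u_i : i\ge 1\}$. This graph is locally finite and 2-connected. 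The star-comb lemma returns the spine $R = r_0 r_1 \dotsm$ with single-edge teeth $P_i = r_i u_i$. In your construction $Q_k$ always contains the entire segment $R[r, r_{j_{k+1}}]$ and hence both of $r_{j_{k+1}-1}$ and $r_{j_{k+1}}$, which are the \emph{only} neighbours of $u_{j_{k+1}}$. Thus $u_{j_{k+1}}$ is isolated in $G - F_{k+1}$ no matter how large $j_{k+1}$ was chosen, and the induction already fails at the first step.

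What 2-connectivity really buys here is Menger's theorem: each $u_i$ admits two \emph{internally disjoint} $u_i$--$R$ paths, so that there is a path $P_i$ through $u_i$ with \emph{two} attachment points $x_i, y_i$ on $R$; one can then enter $u_i$ from the earlier attachment and exit from the later one. This is precisely what the paper does. It first chooses $R$ (via an end-accumulation point of $U$ and a normal-spanning-tree ray) so that every component of $G - R$ has only finitely many neighbours on $R$; this, together with local finiteness, lets one pass to a subsequence whose $R$-segments $R[x_i, y_i]$ are pairwise disjoint, and a single splice of these detours into $R$ then yields the desired ray. Your inductive scheme can be repaired along the same lines, but only by incorporating the two-path detour idea rather than the bare degree bound.
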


In this section we assume that the reader is familiar with the basics about the end-compac\-ti\-fication of a graph, and normal spanning trees; we refer to \cite{diestelBook05} therefor.
%%%%%%%%%%
\begin{proof}
Let $\chi$ be an accumulation point of $U$ in the end-compactification of \G, and choose $U'\subseteq U$ converging to $\chi$. Let $R$ be a ray converging to $\chi$, \st\ each component of $G - R$ sends only finitely many edges to $R$; we could for example choose $R$ inside a normal spanning tree of \G; see \cite[Exercise~8.27]{diestelBook05} or \cite[Lemma~11]{fleisch}. If $R\cap U'$ is infinite then we are done, so assume this is not the case. 

Note that no component of $G - R$ can contain an infinite subset of $U'$. Thus there is an infinite set $\seq{C}$ of components of $G - R$ each intersecting $U'$. For each $C_n$, pick $u_n\in C_n \cap U'$, and two disjoint \pths{u_n}{R}. The union of these two paths is a path $P_n$ through $C_n$ with end-vertices $x_n,y_n$ on $R$. Since \g is \lf, we can easily choose an infinite subset $\seq{C'}$ \st\ the subpaths $R_n$ of $R$ from $x_n$ to $y_n$ are pairwise disjoint. By replacing each $R_n$ with $P_n$ we thus transform $R$ into a ray $R'$ containing infinitely many elements of $U'$.

\end{proof}

%%%%%%%%%%
\begin{proof}[Proof of \Tr{SC 2con}]
Suppose first that \g is \lf. By \Lr{comb 2con}, \g has a ray $R$ with $R\cap U$ infinite. Let $\chi$ be the end of \g containing $R$. Halin \cite{halin74} proved that %By \Lr{LemHal}, there are 
we can find two disjoint rays belonging to $\chi$.
 It is proved in \cite[Lemma~10]{fleisch} that we can choose these two rays $X,Y$ so that they intersect every ray of $\chi$ infinitely often. In particular, $R \cap (X\cup Y)$ is infinite. Since $X,Y$ belong to the same end, we can find a sequence \seq{P}\ of pairwise disjoint \pths{X}{Y}\ \st\ $\bigcup_\nin P_n \cup X\cup Y$ forms a ladder $L$. By replacing some subpaths of $L$ by subpaths of $R$ we can easily obtain a ladder $L'$ containing infinitely many elements of $U$. This settles the case where \g is \lf.
\medskip

If \g is not \lf, then we can still assume it is countable. For if not, then we can find a countable 2-connected subgraph $G'$ with $V(G) \cap U$ infinite, and find the desired structure in $G'$. Indeed, we can pick a countably infinite subset $\{u_1, u_2, \ldots \}$ of $U$, choose a pair of independent paths from each $u_i$ to the two end-vertices of a fixed edge $xy$ of \G, which exist by Menger's theorem, and let $G'$ be the union of all these paths and $xy$. 

Let $T$ be a normal spanning tree of \G. Let $r$ be the root of $T$, and define the \defi{height} $h(v)$ of each $v\in V(G)=V(T)$ to be the distance between $v$ and $r$ in $T$. Apply the star-comb lemma to $T,U$ to obtain a subgraph $Z$ which is either a $U$-star or a $U$-comb. 

If $Z$ is a $U$-star, let $c$ be its centre. We claim that \g contains a $U$-double-star, with $c$ being one of its two infinite degree vertices. %, and choose $Z$ so as to minimise $h(c)$. We claim that \g contains an infinite bouquet of cycles $B$ with centre $c$ and $B\cap U$ infinite, i.e. 
To prove this claim we introduce the following terms. Given an edge $e\in E(T)$, define the \defi{branch of $e$} to be the subgraph $B_e$ of \g induced by the vertices in the component of $T - e$ that does not contain $r$. Note that all neighbours of $B_e$ lie on a subpath $I_e$ of $T$ because $T$ is normal, and the top end-vertex $I^\mathrm{t}_e$ of $I_e$ lies in $e$. We choose $I_e$ to be minimal with these properties. Define the \defi{closed branch $\cls{B_e}$} as the subgraph  of \g induced by $B_e \cup I_e$. Easily, $\cls{B_e}$ is 2-connected since \g is. A \defi{$U$-ear} of $B_e$ is a path $E_e$ in $\cls{B_e}$ joining $I^\mathrm{t}_e$ to some other vertex of $I_e$, \st\ $E_e$ is otherwise disjoint from $I_e$ and contains a vertex of $U\cap B_e$. It is easy to find a $U$-ear for each  branch $B_e$ intersecting $U$ using the 2-connectedness of $\cls{B_e}$; indeed, note that $\cls{B_e}$ remains 2-connected after contracting $I_e - I^\mathrm{t}_e$. 

To prove our claim, note that there are infinitely many such branches $B_e$ with $e$ incident with $c$, and these branches are pairwise disjoint. Since there are only finitely many vertices below $c$ in $T$, we can find an infinite subset of those branches with $U$-ears (starting at $c$ and) ending at the same vertex $c'$. The union of those $U$-ears is the desired $U$-double-star. 

%An \defi{extremal ear} of $B_e$ is a subpath $E_e$ of $\cls{B_e}$ \st\ the end-vertices of $E_e$ coincide with those of $I_e$, and $E_e$ is otherwise disjoint from $I_e$. It is easy to find an extremal ear in each branch 

\comment{
	\begin{figure} 
\begin{center}
%\begin{overpic}[width=.7\linewidth]{} 
%\put(9,-1){$4$}
%\end{overpic}
%\includegraphics[width=0.3\linewidth]{figD.pdf} 
%\includegraphics[width=0.5\linewidth]{figTp2.pdf} 
\end{center}
\caption{Obtaining a $U$-double-star or $U$-fan from a bouquet $B'$ of cycles, depending on whether $X'$ is a $U'$-star (left) or  $U'$-comb (right).} \label{figBouquet}
	\end{figure}
}

\medskip
This completes the case where $Z$ is a $U$-star, and we now proceed to the case  where  $Z$ is a $U$-comb. We further distinguish two cases according to whether the spine $R$ of $Z$ is \defi{dominated} in \g or not. Here, we say that $R$ is dominated, if there is a vertex $d$ sending infinitely many independent paths to $R$.

Suppose first that $R$ is dominated, by a vertex $d$ say. By extending or shortening $R$ as needed, we may assume that $R$ starts at $r$. (It follows that $d\in V(R)$, but we will not need this.) If $R \cap U$ is infinite then we immediately obtain a $U$-fan, with $d$ being the infinite degree vertex, and we are done. So assume $R \cap U$ is finite, and note that this means that there is an infinite set $\{B_{e_n}\}_\nin$ of pairwise disjoint branches each containing a tooth of $Z$; indeed, we can choose $e_n$ to be the $n$th edge of $Z$ incident with $R$ but not contained in it. Recall from above that each $\cls{B_{e_n}}$ contains a $U$-ear $E_{e_n}$, and note that each $E_{e_n}$ has both end-vertices on $R$. We distinguish two cases according to how these end-vertices are distributed along $R$:
\begin{itemize}
	\item[Case 1:] \label{R i}  No vertex of $R$ is incident with infinitely many $E_{e_n}$'s. In this case we can greedily choose an infinite  subsequence $\{E_{f_n}\}$ of  $\{E_{e_n}\}$ \st\ the subpaths $R_n$ of $R$ between the two end-vertices of each $E_{f_n}$ are pairwise disjoint. By replacing infinitely many of these subpaths $R_n$ by $E_{f_n}$, we can modify $R$ into a ray $R'$ \st\ $R'\cap U$ is infinite and $d$ still dominates $R'$ (\fig{figRdom}), hence obtaining a $U$-fan in \G. Indeed, we can greedily alternate between choosing a \pth{d}{R} and a $U$-ear $E_{f_n}$ to include in our $U$-fan; each time we choose a \pth{d}{R}, we delete the finitely many $U$-ears it meets, and conversely, each time we choose a $U$-ear $E_{f_n}$, we replace $R_n$ by $E_{f_n}$, and delete the finitely many \pths{d}{R}  it meets. Here, we assume that we have pre-selected an infinite family of pairwise independent  \pths{d}{R}, which we can since $d$ dominates $R$.
	\item[Case 2:] \label{R ii} Some $v \in V(R)$ is incident with infinitely many $E_{e_n}$'s. Note that $v$ must be the lower of the two end-vertices of these $E_{e_n}$'s, since the top end-vertex lies in $e_n$ by the definition of  $E_{e_n}$, with just one possible exception in case $v$ happens to be an end-vertex of some $e_n$. By the same argument, the other end-vertices of the  $E_{e_n}$'s incident with $v$ can be assumed to be pairwise distinct. Thus the union of all these $E_{e_n}$'s with a sub-ray of $R$ forms a $U$-fan in \G, with $v$ being the infinite degree vertex.
%	\item[Case 3:] \label{R iii} 
\end{itemize}

	\begin{figure} 
\begin{center}
\begin{overpic}[width=.7\linewidth]{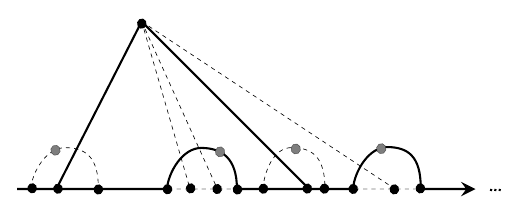} 
\put(23,36){$d$}
\put(84,6){$R$}
\put(3,12){$E_{f_0}$}
\put(28,12){$E_{f_1}$}
\put(79,12){$E_{f_4}$}
\end{overpic}
\end{center}
\caption{Obtaining a $U$-fan (bold lines) from a dominated comb $R$ of $T$ . The dashed lines represent paths that are not contained in the $U$-fan. The grey vertices represent elements of $U$.} \label{figRdom}
	\end{figure}

\medskip
This completes the case where $R$ is dominated, and we now proceed to the case where it is not. Again,  we may assume that $R$ starts at the root $r$. In this case we claim that 
\labtequ{claimH}{there is a \lf, 2-connected, subgraph $H$ of $G$ containing $R$ with infinite $H\cap U$.} 
To construct $H$, %we will again use some branches $B_e$ for edges $e$ incident with but not contained in $R$, but we will have to choose them differently, as  it might be that only . Namely, 
for each vertex $w\neq r\in V(R)$, let $P_w$ be a minimal path connecting the two components of $R - w$, which exists since \g is 2-connected. 
Suppose first that for some $w$, there are
such paths $P_w$ with end-vertices of arbitrarily large height. In this case we apply the star-comb lemma to their union, to obtain a $V(R)$-star or $V(R)$-comb $X$ in that subgraph. If $X$ was a $V(R)$-star then this would contradict the fact that $R$ is not dominated, and so $X$ is a $V(R)$-comb. If $(X \cup R)\cap U$ is infinite, then we just let $H:=  X \cup R$ and have achieved \eqref{claimH}. If not, then similarly to the case where $R$ is dominated, we can find infinitely many edges $e$ incident with $R$ \st\ their branch $B_e$ contains a tooth of $Z$ lying in $U$. For each such $B_e$, we pick a $U$-ear $E_e$, and add it to $X \cup R$ to obtain $H$.
Note that $H$ must be locally finite, because the $B_e$ are pairwise disjoint, and  if infinitely many ears $E_e$ share an end-vertex $v$ (on $R$), then $v$ dominates $R$, contradicting our assumption. 

If, on the contrary, no such $w$ exists, then we choose each $P_w$ so as to maximise the height of its top end-vertex on $R$. This choice of $P_w$ allows us to ensure that for $w\neq w' \in V(R)$, the paths $P_w, P_{w'}$ either coincide, or they are disjoint, or the top end-vertex of one of them coincides with the bottom end-vertex of the other; for otherwise we could find a path $P'$ in $P_w \cup P_{w'}$ that can serve as both $P_w$ and  $P_{w'}$. It follows that $H':= R \cup \bigcup_{w\in V(R)} P_w$ is \lf. Easily, $H'$ is also 2-connected. If $H'\cap U$ is infinite we set $H:= H'$ and have proved our claim. If it is finite, then again we can find infinitely many edges $e$ incident with $R$ \st\ their branch $B_e$ contains a tooth of $Z$ lying in $U$. For each such $B_e$, we pick a $U$-ear $E_e$, and add it to $H'$ to obtain $H$. Easily, $H$ is still  2-connected, and $H\cap U$ is infinite. As above, $H$ is \lf\ because $R$ is not dominated. This completes the proof of \eqref{claimH}.

%let us define an \defi{extremal ear} of $B_e$ is a subpath $E_e$ of $\cls{B_e}$ \st\ the end-vertices of $E_e$ coincide with those of $I_e$, and $E_e$ is otherwise disjoint from $I_e$. It is easy to find an extremal ear in each branch 

We can now reduce our problem to the \lf\ case, by replacing \g by $H$. But we have handled the \lf\ case above, obtaining a $U$-ladder.
\end{proof}

\begin{problem} %\label{}
Is it possible to generalise \Tr{SC 2con} to $k$-connected graphs, obtaining a finite list of subdivisions of $k$-connected graphs as unavoidable structures?
\end{problem} 

Results of similar flavour have been obtained by Gollin \& Heuer \cite{GolHeuCha}.

\section{Final remarks} \label{final}

It would be interesting to find the excluded minors for the classes $\rmce{\Sig}$, $\rme{\mathrm{Planar}}$, $\rmce{\mathrm{Planar}}$ and $\rmece{\mathrm{Planar}}$, and this should be within reach with the above methods and a little bit more work. I suspect that $$\rmce{\Sig}= \rmce{\mathrm{Planar}} = \rmece{\mathrm{Planar}} =\forb{\omdot K_5, \omdot \Ktt, \bigvee K_5, \bigvee \Ktt}.$$ % = \forb{\Sig_1, \Sig_2, \Sig_3, \Sig_4}$, 
The first two equalities have been proved in \Prr{prop AP}. I also suspect that  
$\rme{\mathrm{Planar}}=\forb{ \ex{\Sig} \cup \{\prl{K_5}, \prl{\Ktt} \}}$, where $\prl{K}$ is obtained from a graph $K$ by replacing each edge $uv$ by infinitely many \pths{u}{v}\ of length 2.

\comment{
	\begin{question} \label{Q SigCE}
Is $\rmce{\Sig}=\forb{\omdot K_5, \omdot \Ktt, \bigvee K_5, \bigvee \Ktt}$?
\end{question}
}

\medskip

Let us say that a minor-closed class \cc\ of graphs is \defi{good}, if $\cc=\forb{X}$ for a finite set $X$ of (possibly infinite) graphs. A well-know conjecture of Thomas \cite{ThoWel} postulates that the countable graphs are \wqo\ under the minor relation. A positive answer would imply that all minor-closed classes of countable graphs are good, but as mentioned in the introduction, this seems out of reach at the moment.  Still, we could seek to extend the Graph Minor Theorem \cite{GMXX} by finding sufficient conditions for classes of infinite graphs to be good. The following questions suggest a possible direction, and the methods of this paper  could be helpful. For further questions in a similar vein see \cite{Universal}.

\begin{question} \label{Q good}
Suppose \cc\ is a good minor-closed class of countable graphs. Must each of $\rmv{\cc},\rme{\cc},\rmce{\cc},\rmece{\cc}$ be good?
\end{question}

We say that a class \cc\ of graphs is \defi{\cof}, if $\cc = \forb{S}$ for a set $S$ of finite graphs (which set can be chosen to be finite by the Graph Minor Theorem \cite{GMXX}). Note that a graph \g belongs to such a class \cc\ \iff\ every finite minor of \g does. \Qr{Q good} is open in general even if \cc\ is \cof, except that $\rmv{\cc}$ is covered by \Prr{prop PV} in this case. This papers provides some techniques for attacking it. In a similar spirit, one can ask whether the class of graphs admitting a finitary decomposition into graphs in $\cc$ is good whenever $\cc$ is good/\cof.

We say that a class \cc\ of graphs is \defi{\uncof} (Union of Nested \Cof\ classes), if there is a sequence \seq{C} of \cof\ classes \st\ $\cc=\bigcup_{\nin} C_n$ and $C_n \subseteq C_{n+1}$ holds \fe\ $\nin$. The classes studied in this paper ($\Sig,\frsE,\frsCE, \ope$, etc.) are easily seen to be \uncof. Our results support

\begin{conjecture} \label{Con good}
Every \uncof\ class of countable graphs is good.
\end{conjecture}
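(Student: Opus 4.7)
Let $\cc = \bigcup_{n \in \N} C_n$ with $C_n = \forb{S_n}$ for finite sets $S_n$ of finite graphs, and $C_n \subseteq C_{n+1}$. The inclusion $C_n \subseteq C_{n+1}$ translates, by contraposition, to the statement that every $H \in S_{n+1}$ has a minor in $S_n$. Set $\ct := \bigcup_n S_n$, and let $\ct_{\min} \subseteq \ct$ denote the set of minor-minimal elements of $\ct$. By the Robertson--Seymour theorem, $\ct_{\min}$ is finite. Moreover, if $H \in \ct_{\min} \cap S_n$ with $n \geq 2$, then the minor of $H$ guaranteed to lie in $S_{n-1}$ is itself a minor of $H$ belonging to $\ct$, so by minor-minimality it equals $H$; thus $H \in S_{n-1}$. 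Iterating, $\{n : H \in S_n\}$ is downward closed in $\N$, which lets us partition $\ct_{\min} = \ct_\infty \sqcup \ct_{\mathrm{fin}}$, where $\ct_\infty$ consists of those $H$ lying in every $S_n$.

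Each $H \in \ct_\infty$ is already a (finite) excluded minor of $\cc$: $H \notin C_n$ for any $n$, so $H \notin \cc$; and any proper minor $H'$ of $H$ has no minor in $\ct_{\min}$ (by antichain-minimality of $\ct_{\min}$), hence no minor in $\ct$, so $H' \in C_n$ for every $n$ and $H' \in \cc$. Hence $\ct_\infty \subseteq \ex{\cc}$, and the task reduces to showing that only finitely many minor-minimal infinite excluded minors of $\cc$ exist.

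For an infinite excluded minor $G$ of $\cc$, minor-minimality forbids $G$ from having any $\ct_\infty$-element as a minor. For each $n$ pick $K_n \leq G$ with $K_n \in S_n$, and let $H_n \in \ct_{\min}$ be a minor of $K_n$ inside $G$; by finiteness of $\ct_{\min}$ and pigeonhole, some $H \in \ct_{\mathrm{fin}}$ occurs as $H_n$ for infinitely many $n$, so $G$ contains infinitely many distinct finite obstructions, each having $H$ as a minor. By minor-minimality of $G$, removing any finite set of vertices of $G$ yields a graph in $\cc$, forcing these obstructions to spread out across $G$. Following the template used in this paper for $\frsE$, $\ope$, and $\Sig$, I would then trichotomize: (i) there is an infinite family of pairwise vertex-disjoint copies of $H$, yielding an $\omdot H$ minor; (ii) there is a vertex or edge of $G$ shared by infinitely many copies, yielding a $\bigvee H$ or edge-bouquet minor; or (iii) the copies share a larger finite skeleton $K \subset G$, and applying the star-comb lemma (or \Tr{SC 2con}) to a carefully chosen vertex set produces a minor of $G$ of the form ``$K$ with an infinite star, comb, ladder, or fan attached along a marked subset of $V(K)$''.

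The principal obstacle will be case (iii): controlling the attachment patterns of finite obstructions onto a small finite skeleton and showing that only finitely many such patterns can arise as minor-minimal excluded minors of $\cc$. The natural tool is a well-quasi-ordering statement for marked finite graphs, analogous to the framework of marked minors developed in \Sr{sec UOP}, combined with induction on the size of the skeleton. Since such marked-minor WQO statements should be reducible to the finite Robertson--Seymour theorem, the conjecture ought to be provable without invoking Thomas's conjecture; nevertheless, converting the structural trichotomy into an explicitly finite list of excluded minors in full generality will require substantial combinatorial bookkeeping, and I expect this to be the main heart of the argument.
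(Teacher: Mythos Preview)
The statement you are attempting to prove is \Cnr{Con good}, which the paper explicitly poses as an \emph{open conjecture}; no proof is given. Indeed, the paper remarks that \Cnr{Con good} implies the Graph Minor Theorem, so it is at least as hard as that, and the author offers it only as a target supported by the special cases worked out in the paper ($\Sig$, $\frsE$, $\frsCE$, $\ope$). There is therefore no ``paper's own proof'' to compare your proposal against.

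Your write-up is honest about being a sketch rather than a proof, and the reduction you carry out in the first two paragraphs (identifying the finite excluded minors with $\ct_\infty$ via Robertson--Seymour) is correct and a reasonable first step. But from there on the argument is a programme, not a proof. Two concrete gaps: first, your trichotomy (disjoint copies / shared vertex or edge / larger shared skeleton) is heuristic; there is no reason the ``infinitely many obstructions'' you find in $G$ must organise themselves into one of these three clean patterns, and the case analysis in the paper for $\Sig$ already required substantial ad hoc work beyond this trichotomy (e.g.\ the marked-minor machinery of \Sr{sec UOP} and \Lr{lem SU}). Second, even if you produce structures like $\omdot H$ or $\bigvee H$ inside every infinite excluded minor, you have not argued that these structures themselves fail to lie in $\cc$, nor that they are minor-minimal with that property---both are needed to conclude that the list of excluded minors is finite. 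Your case (iii), as you say, is where the real difficulty lies, and nothing in the paper suggests a general mechanism for handling it.
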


Another interesting example of an \uncof\ class \cc\ comprises the graphs \g of finite Colin de Verdi\`ere invariant $\mu(G)$, whereby for infinite \g we define  $\mu(G)$ to be the supremal $m$ \st\ every finite subgraph $H\subset G$ satisfies $\mu(H)\leq m$. Is this \cc\ good? Can we determine $\ex{\cc}$? 

Not every proper minor-closed class is \uncof. For example, $\forb{K_\omega}$ is not, because it contains the disjoint union of $K_n, \nin$, which no proper \cof\ class contains. Thus \Cnr{Con good} is weaker than Thomas' conjecture. Beware however that \Cnr{Con good} implies the Graph Minor Theorem: any minor-closed class of finite graphs is shown to be \uncof\ by letting $C_n$ be its sub-class comprising the elements with at most $n$ vertices.

\acknowledgement{I thank Nathan Bowler and Max Pitz for spotting a mistake in an earlier version of the paper. I thank the anonymous referees for proposing several substantial improvements.}

%\newpage
\comment{
	\begin{lemma} \label{lem}

\end{lemma}
%%%%%%%%%
\begin{proof}

\end{proof}

\begin{problem} \label{}

\end{problem} 
}

%\acknowledgement{ }

\bibliographystyle{plain}
\bibliography{collective}

\extras{

We generalise 
\begin{lemma}[\cite{halin74}] \label{LemHal}
Let \g be a countable, 2-connected, \lf\ graph, and $\chi$ an end of \G. Then \g contains two disjoint rays belonging to $\chi$.
\end{lemma}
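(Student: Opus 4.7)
The plan is to fix a ray $R = v_0 v_1 v_2 \ldots$ in $\chi$ and construct a second ray $R'$ in $\chi$, vertex-disjoint from a tail of $R$, via a K\"onig compactness argument on a family of $R$-bypasses supplied by 2-connectedness. Concretely, for each $n \geq 1$, since $G$ is 2-connected, $G - v_n$ is connected, so there is a $v_{n-1}$--$v_{n+1}$ path avoiding $v_n$; the maximal subpath of this whose internal vertices lie off $R$ gives an $R$-bypass $B_n$ with endpoints $v_{a_n}, v_{b_n}$ satisfying $a_n < n < b_n$ and internal vertices off $R$. These bypasses will be the building blocks of $R'$.

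The first main case is when the indices $(a_n)$ are bounded, so after passing to an infinite subfamily we may assume all $B_n$ start at a common vertex $v_a \in V(R)$. The set of initial segments of these $B_n$ in $G$ forms a rooted tree at $v_a$ whose branching is finite by local finiteness of $G$. Local finiteness also forces $d_G(v_a, v_{b_n}) \to \infty$ (balls around $v_a$ are finite, while $b_n > n \to \infty$), so the lengths of the $B_n$ grow unboundedly and the tree has vertices at every depth. K\"onig's infinity lemma then yields an infinite ray $R' = v_a x_1 x_2 \ldots$. Each $x_i$ with $i \geq 1$ is a position-$i$ vertex of infinitely many $B_n$, and can be the endpoint $v_{b_n}$ of only finitely many of them (since $b_n$ takes infinitely many values), so $x_i$ must be an internal vertex of some $B_n$ and hence lies off $R$. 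Thus $R'$ is disjoint from $R$ except at $v_a$, which we remove by truncating $R$. To verify $R' \in \chi$, I would show that for any finite separator $S$, taking $k$ large with $x_k \notin S$, infinitely many $B_n$ agreeing with $R'$ up to position $k$ continue via $G$-paths to $v_{b_n}$ with $b_n$ arbitrarily large; a pigeonhole-plus-local-finiteness argument rules out the possibility that all these continuations meet $S$ before reaching the tail of $R$, yielding the desired connection in $G - S$ between the tails of $R'$ and $R$.

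The second main case is when $(a_n)$ is unbounded. Here I would pass to a pairwise internally-vertex-disjoint subfamily $B_{n_1}, B_{n_2}, \ldots$ with $b_{n_k} < a_{n_{k+1}}$, forming a ``ladder-like'' 2-connected subgraph $H \subseteq G$ with the same end $\chi$, and then apply the bounded case recursively inside $H$: each $v_{a_{n_k}}$ now admits, inside $H$, bypasses that traverse the off-$R$ arcs of neighboring $B_{n_j}$'s, and chaining such bypasses yields a ray in $H$ using only internal bypass vertices. I expect this case to be the technical heart of the proof; the main obstacle is ensuring the chained ray truly avoids $R$ at the junctions between consecutive $B_{n_k}$, since the off-$R$ arcs of different $B_{n_k}$ are not directly connected. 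Overcoming this should require routing around the junctions using the 2-connectedness of $H$ to supply further bypasses inside $H$, together with a diagonal K\"onig limiting argument to assemble the pieces into a single ray.
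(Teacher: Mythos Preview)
The paper does not prove this lemma; it is quoted as Halin's result \cite{halin74} and used as a black box (both here in the extras section and in the proof of Theorem~\ref{SC 2con}). So there is no ``paper's proof'' to compare against, and I will evaluate your attempt on its own merits.

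Your Case~1 (bounded $a_n$) is essentially sound: the K\"onig argument on the tree of prefixes of the $B_n$ produces a ray $R'$ meeting $R$ only at $v_a$, and your reason why each $x_i$ with $i\geq 1$ lies off $R$ is correct. The verification that $R'\in\chi$ is sketchy but salvageable.

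Your Case~2, however, has a genuine gap. You select bypasses with $b_{n_k} < a_{n_{k+1}}$, i.e.\ each bypass ends on $R$ strictly before the next one begins. Then every vertex $v_j$ of $R$ with $b_{n_k} \leq j \leq a_{n_{k+1}}$ is a cut vertex of $H = R \cup \bigcup_k B_{n_k}$, so $H$ is \emph{not} 2-connected, contrary to what you assert. Consequently you cannot ``use the 2-connectedness of $H$ to supply further bypasses inside $H$'', and indeed inside $H$ there is \emph{no} path from the interior of $B_{n_k}$ to the interior of $B_{n_{k+1}}$ avoiding $R$. Your proposed chaining is therefore impossible as written.

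The standard repair is to choose the bypasses so that they \emph{overlap} along $R$: simply set $n_{k+1}:= b_{n_k}$, which forces $a_{n_{k+1}} < n_{k+1} = b_{n_k} < b_{n_{k+1}}$. Then the intervals $[a_{n_k},b_{n_k}]$ cover a tail of $\N$, and (after truncating $R$ and handling possible off-$R$ intersections of the $B_{n_k}$) the resulting $H$ is a genuinely 2-connected, locally finite, one-ended graph. On such a ``fat ray'' one can build the second ray directly by an inductive ear argument, or reduce to your Case~1 applied inside $H$. Without this overlap, Case~2 does not go through.
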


... as follows: 

\begin{lemma} \label{LemLadFan}
Let \g be a countable, 2-connected, graph, and $\chi$ an end of \G. Then \g contains a subdivision of one of the graphs of \fig{} converging to $\chi$, i.e. a ladder or an infinite fan.
\end{lemma}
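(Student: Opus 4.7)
The plan is to derive this lemma directly from \Tr{SC 2con} by choosing the set $U$ wisely. I would first fix a normal spanning tree $T$ of \g, and pick a ray $R^*$ of $T$ belonging to the end $\chi$; the existence of such an $R^*$ is a standard property of normal spanning trees of countable graphs. Setting $U:=V(R^*)$, I would then apply \Tr{SC 2con} to the pair $(\g,U)$.

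The key observation driving the whole argument is that, since $U=V(R^*)$, any ray of \g sharing infinitely many vertices with $R^*$ automatically belongs to $\chi$. Thus whenever \Tr{SC 2con} returns a $U$-ladder, both of its parallel rays belong to $\chi$ (one of them meets $U$ infinitely often, and the other is end-equivalent to the first through the rungs), so the ladder converges to $\chi$ as desired. The same applies to a $U$-fan whose spine meets $U$ infinitely often.

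The substantive task is to avoid---or to reroute around---the $U$-double-star outcome. Looking back at the proof of \Tr{SC 2con}, the $U$-double-star is produced precisely when the star-comb lemma applied to $(T,U)$ returns a $U$-star. But since $R^*\subseteq T$ and $U=V(R^*)$, the ray $R^*$ is itself a (trivial) $U$-comb in $T$, so the comb case of the star-comb lemma can always be selected, with spine $R=R^*$. The remainder of the proof of \Tr{SC 2con} then splits according to whether $R^*$ is dominated in \g: in the dominated case, that argument produces a $U$-fan whose spine is either a sub-ray of $R^*$ (Case~2 there) or a ray $R'$ obtained from $R^*$ by replacing finitely many finite subpaths $R_n$ by $U$-ears (Case~1 there); in either sub-case the spine retains infinitely many vertices of $R^*$ and hence belongs to $\chi$. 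In the non-dominated case, the proof constructs a \lf, 2-connected subgraph $H\supseteq R^*$ with infinite $H\cap U$ and applies \Lr{comb 2con} together with Halin's theorem to produce a $U$-ladder inside $H$; at least one of its two rays shares infinitely many vertices with $R^*$, so again the ladder converges to $\chi$.

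The main obstacle is bookkeeping rather than a conceptually new idea: at every modification in the proof of \Tr{SC 2con} one must verify that the resulting ray (or pair of rays) still meets $R^*$ infinitely often. All constructions in that proof respect this invariant, so no further substantive work beyond tracking it through the case analysis is required.
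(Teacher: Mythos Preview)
Your approach is correct and takes a genuinely different route from the paper's. The paper gives a direct, self-contained construction: it picks any ray $R = r_0 r_1 \ldots$ in $\chi$, uses 2-connectedness to find, for each $r_n$, a path $P_n$ connecting the two components of $R - r_n$, and arranges (by passing to a spanning forest) that $X := R \cup \bigcup_n P_n$ has $X - E(R)$ a forest. It then splits on whether $X$ is locally finite: if so, Halin's theorem applied to $X$ yields two disjoint rays in $\chi$ and hence a ladder; if not, any vertex $v$ of infinite degree in $X$ gives the fan, since the $v$--$R$ paths through the forest $X - E(R)$ meet only at $v$. No normal spanning tree and no appeal to \Tr{SC 2con} are needed.

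Your argument instead re-enters the proof of \Tr{SC 2con} with the particular choice $U = V(R^*)$, which forces the $U$-comb branch and rules out the double-star. This works, and in fact the bookkeeping is lighter than you indicate. In the dominated sub-case one has $R\cap U = V(R^*)$ infinite from the outset, so the proof of \Tr{SC 2con} produces the fan \emph{before} ever reaching its Cases~1 and~2; your remark about ``replacing finitely many finite subpaths'' (which should read ``infinitely many'') is therefore moot. In the non-dominated sub-case the two rays $X,Y$ produced inside the \lf\ graph $H$ are chosen to meet \emph{every} ray of their end infinitely often, and since $R^*\subseteq H$ lies in that end, each of $X,Y$ already meets $R^*$ infinitely; so the ladder $L$ converges to $\chi$ without any modification to an $L'$. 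The paper's proof is shorter and more elementary; yours has the merit of exhibiting this lemma as a byproduct of the proof of \Tr{SC 2con}, though not of its statement.
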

%%%%%%%%%%
\begin{proof}
Pick a ray $R=r_0 r_1 \ldots$ of $\chi$. Since \g is 2-connected, \ti, \fe\ $i>0$, a path $P_n$ in \g joining the two components of $R-r_n$. Note that $X:= R \cup \bigcup_\nin P_n$ is 2-connected. We claim that it is possible to choose the $P_n$ so that $X - E(R)$ is a forest. Indeed, we can choose a spanning forest $F$ of $X - E(R)$, and replace each $P_n$ by a subpath $P'_n$ of its component in $F$. We hereby allow $P'_n$ to have different end-vertices than $P_n$, but insist that it connects the two components of $R-r_n$. 

We distinguish two cases: if $X$ is \lf, then \Lr{LemHal} yields two disjoint rays $X,Y$ equivalent with $R$. By greedily adding an infinite sequence of pairwise disjoint \pths{X}{Y}\ we thus obtain a ladder containing $X \cup Y$. If  $X$ is not \lf, then it has a vertex $v$ with infinitely many incident edges $e_0,e_1, \ldots$. Let $Q_i$ be a \pth{v}{R} containing $e_i$, which exists since each $e_i$ lies in some $P'_n$, which has both end-vertices on $R$. Recall that $X - E(R)$ is a forest, and therefore so is $\bigcup_{i\in \N} Q_i$. In particular the no two $Q_i$'s share a vertex other than $v$. Thus $R \cup \bigcup_{i\in \N} Q_i$ is a subdivision of a dominated ray.
\end{proof}

\begin{figure} 
\begin{center}
%\begin{overpic}[width=.7\linewidth]{figOis} 
%\put(9,-1){$4$}
%\end{overpic}
%\includegraphics[width=0.3\linewidth]{figD.pdf} 
%\includegraphics[width=0.5\linewidth]{figTp2.pdf} 
\end{center}
\caption{A \defi{ladder}, and a \defi{dominated ray}.} \label{figLadStar}
\end{figure}

}

\end{document}